\documentclass[12pt]{amsart}
\usepackage{amssymb}

\input amssym.def

\textwidth=14cm

\hoffset=-1cm

 2   
\font\germ=eufm10 scaled 1200

\newtheorem{thm}{Theorem}[section]
\newtheorem{lem}[thm]{Lemma}
\newtheorem{prop}[thm]{Proposition}

\newtheorem{rmk}{Remark}[section] 
\newcommand{\thmref}[1]{Theorem~\ref{#1}}
\newcommand{\lemref}[1]{Lemma~\ref{#1}}

\newcommand{\propref}[1]{Proposition~\ref{#1}}

\newcommand{\Z}{{\mathbb Z}}
\newcommand{\Q}{{\mathbb Q}}
\newcommand{\N}{{\mathbb N}}
\newcommand{\F}{{\mathbb F}}
\newcommand{\A}{{\mathcal A}}
\newcommand{\M}{{\mathcal M}}
\newcommand{\p}{{\frak p}}
\newcommand{\half}{{\frac{1}{2}}}
\newcommand{\Gal}{{\rm Gal}}
\newcommand{\GL}{{\rm GL}}

\begin{document}

\title[Modular Forms]{A variant of Lehmer's conjecture, \linebreak
                                 II: The CM-case}

\author{Sanoli Gun and V. Kumar Murty}

\address[Sanoli Gun]
        {The Institute of Mathematical Sciences, 
         CIT Campus, Taramani, 
         Chennai 600 113 
         India}
\address[V. Kumar Murty]
      {Department of Mathematics,
       University of Toronto,
       40 St. George Street, 
       Toronto, ON, Canada, M5S 2E4.}
\email[Sanoli Gun]{sanoli@imsc.res.in }
\email[Kumar Murty]{murty@math.toronto.edu}

\maketitle

\begin{abstract}
Let $f$ be a normalized Hecke eigenform
with rational integer Fourier coefficients. 
It is an interesting question to know how often an
integer $n$ has a factor common with 
the $n$-th Fourier coefficient of $f$.
The second author \cite{kumar3} showed that this happens very often.
In this paper, we give an asymptotic formula for 
the number of integers $n$ for which $(n, a(n))=1$,
where $a(n)$ is the $n$-th Fourier coefficient of a
normalized Hecke eigenform $f$ of weight $2$ with rational integer
Fourier coefficients and has complex multiplication.
\end{abstract}

\section{Introduction}

\smallskip

The arithmetic of the Fourier coefficients of modular forms is intriguing and
mysterious. For instance, consider the cusp form of Ramanujan
\begin{eqnarray*}
\Delta(z) ~=~ \sum_{n=1}^{\infty} \tau(n) e^{2\pi i n z}.
\end{eqnarray*}
The coefficients $\tau(n)$ have received extensive arithmetic scrutiny
following the ground-breaking investigations of Ramanujan \cite{rama} himself.
Here, we have one of the oft-quoted conjectures in number theory attributed to 
Lehmer \cite{lehmer1},\cite{lehmer2} which asserts that 
$$
\tau(p) \ne 0, 
$$
where $p$ is a prime. Equivalently, for any $n \ge 1$,
$$
\tau(n) \ne 0.
$$
In general, proving such non-vanishing of all Fourier coefficients of 
a modular form is delicate and difficult. A more accessible problem is to study 
the arithmetic density of the non-zero coefficients. We refer to 
\cite{kumar1}, \cite{serre3} for results of this type.\\

In a recent work \cite{kumar3}, a variant of Lehmer's conjecture has been 
considered. More precisely, let
$$
f(z) ~=~ \sum_{n=1}^{\infty} a(n) e^{2\pi i n z}
$$
be the Fourier expansion of a normalized eigenform and 
suppose that the $a(n)$'s are rational integers for all $n$. Then it is natural 
to ask whether
$$
\# \left\{p \le x \mid a(p) \equiv 0 \!\!\pmod{p} \right\} ~=~ o(\pi(x)) ~.
$$
Heuristically, if the weight is $>2$, the number of such 
primes up to $x$ may grow like $\log\log x$
though we do not even know if these are of density zero. In general,
 denoting $(a,b)$ to be the greatest common divisor 
of $a$ and $b$, one can ask whether
$$
\# \left\{n \le x \mid (n, a(n)) \ne 1 \right\} ~=~ o(x),
$$
an assertion which turns out to be false. As mentioned in \cite{kumar3}, the 
correct
question in this context is the opposite assertion, namely whether it is true that
$$
\# \left\{n \le x \mid (n, a(n)) ~=~1 \right\} ~=~ o(x).
$$
This variant of Lehmer's conjecture appears to be amenable to study. In contrast
to the prime case, $a(n)$ almost always has a factor in common with $n$.
In particular, the following result has been proved in \cite{kumar3}. \\
Let us set 
$$
L_2(x) = \log\log x
$$ 
and for each $i\ge 3$, define 
$$
L_i(x) = \log L_{i-1}(x).
$$
In any occurence of an $L_i(x)$, we always assume that $x$ is sufficiently 
large so that $L_i(x)$ is defined and positive.

\begin{thm}\cite{kumar3}
For a normalized eigenform $f$ as above with rational integer Fourier 
coefficients $a(n)$, one has
$$
\# \left\{n \le x \mid (n, a(n)) ~=~1 \right\} ~\ll ~\frac{x}{L_3(x)}.
$$
\end{thm}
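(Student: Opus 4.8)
We treat the complex-multiplication and non-CM cases separately. Suppose first that $f$ has complex multiplication by an imaginary quadratic field $K$. Then $a(p)=0$ for every prime $p$ that is inert in $K$, so any such prime dividing $n$ already lies in $(n,a(n))$; hence $(n,a(n))=1$ forces every prime factor of $n$ to split or ramify in $K$. By Landau's theorem on integers built up from a set of primes of density $\frac12$, the number of such $n\le x$ is $\ll x/\sqrt{\log x}\ll x/L_3(x)$. (This case — in which one can in fact pin down the true order of magnitude — is the subject of the present paper.) So from now on assume $f$ has no CM, and, for simplicity, that its weight is $\ge 2$ (the weight-one case is similar and easier, since the associated representation then has finite image). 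By Deligne's construction, each prime $\ell$ gives a representation $\bar\rho_\ell\colon\Gal(\overline{\Q}/\Q)\to\GL_2(\F_\ell)$ with $a(p)\equiv\mathrm{tr}\,\bar\rho_\ell(\mathrm{Frob}_p)\pmod\ell$ for $p\nmid\ell N$.

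Two ingredients drive the argument. The first is elementary: by Hecke multiplicativity $a(n)=\prod_{p^e\|n}a(p^e)$, and the Hecke recursion gives $a(p^e)\equiv a(p)^e\pmod p$ for $e\ge 1$. Consequently, if $(n,a(n))=1$ and $q$ denotes the least prime factor of $n$, then $q\nmid a(q)$ and every prime dividing $n$ to exactly the first power must avoid the set
$$
\mathcal P_q\ :=\ \{\,p\ \text{prime}\ :\ q\mid a(p)\,\},
$$
for otherwise some $a(p)$ — a factor of $a(n)$ — would be divisible by the prime $q\mid n$. The second is arithmetic: by the work of Serre and Ribet on the images of the $\bar\rho_\ell$, for all but finitely many primes $q$ the image of $\bar\rho_q$ contains $\mathrm{SL}_2(\F_q)$, so counting trace-zero matrices and applying Chebotarev shows that $\mathcal P_q$ has density $\delta_q\gg 1/q$. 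Since moreover $\mathrm{cond}(\bar\rho_q)$ is bounded by a fixed power of $q$, the effective Chebotarev theorem gives $\#\{p\le t:p\in\mathcal P_q\}\gg\delta_q\,\mathrm{Li}(t)$, and hence $\sum_{p\le t,\,p\in\mathcal P_q}p^{-1}\gg q^{-1}\log\log t$, as soon as $\log t$ exceeds a fixed power of $\log q$.

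Now sort $n\le x$ by $P^-(n)$. By Mertens' theorem, the integers $n\le x$ with $P^-(n)>\log\log x$ number $\ll x/\log\log\log x=x/L_3(x)$ — this already accounts for the shape of the bound — and the integers whose prime factors below $\log\log x$ all belong to the (fixed, finite) set of primes exceptional for the $\bar\rho_\ell$ are likewise $\ll x/L_3(x)$ in number, by the same Mertens estimate. For every other $n$ with $(n,a(n))=1$, put $q_0=P^-(n)\le\log\log x$ (non-exceptional, with $q_0\nmid a(q_0)$): by the first ingredient, every prime dividing $n$ exactly once lies in $\mathcal P_{q_0}^c$. Writing such an $n$ as $st$ with $t$ squarefull, $s$ squarefree and $(s,t)=1$, this says $s$ is composed of primes from $\mathcal P_{q_0}^c$; summing over the (sparse) squarefull $t$, using $q_0\mid st$ to gain a factor $q_0^{-1}$, and applying the standard upper bound $\#\{s\le y:p\mid s\Rightarrow p\notin\mathcal P_{q_0}\}\ll y\exp(-\sum_{p\le y,\,p\in\mathcal P_{q_0}}p^{-1})$ together with the Chebotarev lower bound above (legitimate since $x$ is enormous compared with $q_0$), one finds that the number of such $n$ is
$$
\ll\ \frac{x}{q_0}\,\exp\!\Big(-c\,\frac{\log\log x}{q_0}\Big)
$$
for a constant $c=c(f)>0$. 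Summing over $q_0\le\log\log x$ and using partial summation with the substitution $q_0\approx(c\log\log x)/s$ converts the sum into $\asymp(\log\log\log x)^{-1}\int_{c'}^{\infty}e^{-s}\,\frac{ds}{s}$, so the total is $\ll x/L_3(x)$. Adding the pieces yields $\#\{n\le x:(n,a(n))=1\}\ll x/L_3(x)$.

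The main obstacle is the uniform effective Chebotarev input: one needs $\#\{p\le t:q\mid a(p)\}\gg q^{-1}\mathrm{Li}(t)$ uniformly for primes $q$ up to $\log\log x$ and for $t$ as small as about $x^{1/2}$, and in particular one must neutralise a possible exceptional (Siegel) zero of the $L$-function attached to $\bar\rho_q$. Because the relevant conductors are at most a fixed power of $\log x$, this sits comfortably inside the unconditional range, and the at most finitely many $q$ that could carry an exceptional zero are absorbed, like the other exceptional primes, by passing to the next prime factor of $n$. The remaining steps — isolating the multiplicity-one prime factors, handling squarefull parts, and the final summation over $q_0$ — are routine anatomy-of-integers bookkeeping.
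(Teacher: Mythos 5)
This statement is quoted in the paper from \cite{kumar3} and is not re-proved here --- the new work in the present paper begins with Theorem~\ref{thm1}, the CM refinement --- so there is strictly no internal argument to compare yours against. That said, your sketch does capture the Erd\H{o}s--Serre--Murty strategy on which the present paper is visibly built: sort $n$ by its least prime factor $q_0$; from $(n,a(n))=1$ deduce via Hecke multiplicativity that $q_0\nmid a(q_0)$ and that no prime dividing $n$ exactly once lies in $\mathcal{P}_{q_0}=\{p: q_0\mid a(p)\}$; invoke effective Chebotarev for the residual representation $\bar\rho_{q_0,f}$ to get $\sum_{p\le y,\,p\in\mathcal{P}_{q_0}}p^{-1}\gg q_0^{-1}\log\log y$; sieve the squarefree part; gain the factor $q_0^{-1}$ from $q_0\mid n$; and sum over $q_0\le \log\log x$, with the Mertens product taking care of the range $P^-(n)>\log\log x$. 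Your final summation $\sum_{q_0\le T} q_0^{-1}e^{-cT/q_0}\asymp (\log T)^{-1}$ with $T=\log\log x$ is correct and gives $x/L_3(x)$, and your CM reduction to Landau's theorem is also fine.

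The one genuine soft spot --- which, to your credit, you flag yourself --- is the Siegel-zero step. The assertion that ``the at most finitely many $q$ that could carry an exceptional zero are absorbed\ldots by passing to the next prime factor of $n$'' is not justified as written: the fields $L_q$ cut out by $\bar\rho_{q,f}$ vary with $q$, and there is no Landau--Page statement available that caps the number of $q$ with an exceptional zero by a constant. The right fix is the one this paper itself deploys in Section~6 for the CM case, and it works here too: by Stark's theorem any exceptional real zero $\beta$ of $\zeta_{L_q}$ satisfies $\beta<1-(q\,n_{L_q})^{-1}$, and since $n_{L_q}\ll q^4$ in the non-CM setting one gets $1-\beta\gg q^{-5}$. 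Feeding this into the partial-summation evaluation of $\sum_{p\le y,\,q\mid a(p)}p^{-1}$, the Siegel-zero deduction is seen to be $O(q^{-1}\log q)$, which is negligible against the main term $\asymp q^{-1}\log\log y$ once $q\le\log\log x$. So the required Chebotarev lower bound holds uniformly in $q$ with \emph{no} exceptional set arising from Siegel zeros; the only primes that genuinely need to be passed over are the finitely many $q$ for which the image of $\bar\rho_{q,f}$ fails to contain $\mathrm{SL}_2(\F_q)$, as Serre--Ribet guarantee. With that repair your argument closes.
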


In the same paper, it was anticipated that if $f$ has 
complex multiplication (CM), a stronger result should hold. The 
ethos of our present work is to vindicate this anticipation, at least 
in the case that $f$ has weight $2$. 
A modular form $f$ is said to have CM if there is an imaginary
quadratic field $K$ and a Hecke character $\Psi$ of $K$ with conductor 
$\mbox{\germ m}$ so that
$$
f(z) ~=~ \sum_{\mbox{\germ a} \atop ~~~~\left(\mbox{\germ a}, \mbox{\germ m}\right)= 1} 
\Psi(\mbox{\germ a}) e^{2\pi i N(\mbox{\germ a})z}.
$$
Here, the sum is over integral ideals $\mbox{\germ a}$ of the ring of integers 
of $K$ which are coprime to $\mbox{\germ m}$ and $N(\mbox {\germ a})$ denotes 
the norm of $\mbox{\germ a}$. Thus 
$$
a(n) ~=~ \sum_{N(\mbox {\germ a})=n, \atop \left(\mbox{\germ a}, 
\mbox{\germ m}\right)=1} \Psi(\mbox {\germ a}).
$$
In particular for a prime $p$, $a(p) = 0$ if $p$ does not split in $K$ and $a(n)=0$
if $p||n$ (i.e. $p\mid n$ but $p^2\nmid n$) 
for some prime $p$ for which
$a(p) = 0$. It is well-known that if we are given a set $S$ of primes of
 positive density, the set of integers $n$ with the property 
that $p||n$ for some $p \in S$ has density one. Thus $a(n)=0$ for a set of $n$
of density one. More precisely, let us set
$$
M_{f,1}(x) = \# \left\{ n \le x \mid a(n) \ne 0 \right\}.
$$
Then we show that there is a constant $u_f$ so that
$$
M_{f,1}(x) ~=~ (1 ~+~ o(1)) \frac{u_f x}{\sqrt{\pi} (\log x)^\half}.
$$ 
We also show that there is a constant $\omega_f > 0$ so that
$$
\prod_{p < x \atop a(p) \neq 0}
\left( 1 - \frac{1}{p} \right)\ \sim\ \frac{\omega_f}{(\log x)^\half},
$$
where $\omega_f = \mu_f \mu_2\mu_3$, 
\begin{eqnarray*}
\mu_2 &=& \left\{ \begin{array}{ll}
                  \frac{1}{2} & \mbox{if $a(2)\ne 0$} \\
                    1  & \mbox{otherwise}
                  \end{array} \right.  \\ \\
\mu_3 &=& \left\{ \begin{array}{ll}
                  \frac{2}{3} & \mbox{if $a(3)\ne 0$} \\
                    1  & \mbox{otherwise}
                  \end{array} \right. 
\end{eqnarray*}
and $\mu_f$ is given in \propref{mertens}.
Finally, the main result of our paper is the following theorem.
\begin{thm}\label{thm1}
Let $f$ be a normalized eigenform of weight $2$ with rational integer 
Fourier coefficients $\{a(n)\}$. If $f$ is of CM-type, then 
there is a constant $U_f > 0$ so that 
\begin{eqnarray*}
\# \{n\le x \mid (n, a(n)) = 1\} = (1 + o(1))
                \frac{U_f x}{\sqrt{\pi} 
\left(L_3(x)\log x \right)^{\frac{1}{2}}}.
\end{eqnarray*} 
\end{thm}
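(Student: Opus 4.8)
The plan is to reduce the condition $(n,a(n))=1$ to a statement about the factorisation of $n$ relative to the splitting behaviour in the CM field $K$, and then to count by a smooth/rough decomposition together with a Selberg--Delange type estimate; the extra saving over the bound of \cite{kumar3} comes from the fact that the relevant Mertens product is taken over the \emph{split} primes, which have density $\frac12$ in $K$, and hence contributes a square root.

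\smallskip
\noindent\textbf{Step 1 (Reductions).}
Realise $f$ as the theta series of a Hecke character $\Psi$ of $K$ of conductor $\mathfrak m$. Since the weight is $2$ and the $a(n)$ are rational, for a good prime $p$ we have $a(p)=\alpha_p+\bar\alpha_p$ with $\alpha_p\bar\alpha_p=p$ when $p$ splits, $a(p)=0$ when $p$ is inert or ramified (a ramified $p$ necessarily divides $\mathfrak m$, as $|a(p)|^2=p$ is impossible for $a(p)\in\Z$), while $a(p^e)=(\alpha_p^{e+1}-\bar\alpha_p^{e+1})/(\alpha_p-\bar\alpha_p)$ in the split case, $a(p^e)\in\{0,(-p)^{e/2}\}$ in the inert case, and $a(p^e)=0$ in the ramified case. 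Two consequences: (i) if $(n,a(n))=1$ then every prime factor of $n$ splits in $K$, for if an inert or ramified $p$ has $p^e\,\|\,n$ then $a(p^e)$ is either $0$ (whence $a(n)=0$) or a multiple of $p$ (whence $p\mid(n,a(n))$); (ii) for a split good prime $p$ and every $e\ge1$ one has $p\nmid a(p^e)$, because $p$, being split, is ordinary, so $p\nmid a(p)$, and then $\alpha_p\bar\alpha_p=p$ forces exactly one of $\alpha_p,\bar\alpha_p$ to vanish modulo a prime $\mathfrak p\mid p$ of $K$ while the other is $\equiv a(p)\not\equiv0$, giving $a(p^e)\equiv a(p)^e\not\equiv0\pmod{\mathfrak p}$. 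Hence, away from finitely many bad primes, $(n,a(n))=1$ is equivalent to: every prime factor of $n$ splits in $K$, and $p\nmid a(q^{e_q})$ for every pair of distinct primes $p\mid n$, $q\mid n$ (where $q^{e_q}\,\|\,n$). In particular the ``diagonal'' part of the gcd condition is free, and only the ``off-diagonal'' divisibilities remain.

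\smallskip
\noindent\textbf{Step 2 (Local data and the threshold).}
For split $\ell\nmid\mathfrak m$ and $q\neq\ell$, the identity $a(q^{e})=\bar\alpha_q^{\,e}\bigl((\alpha_q/\bar\alpha_q)^{e+1}-1\bigr)/\bigl(\alpha_q/\bar\alpha_q-1\bigr)$ shows that $\ell\mid a(q^{e})$ precisely when the order of $\alpha_q/\bar\alpha_q$ in $(\mathcal O_K/\mathfrak l)^{\times}$ divides $e+1$ and exceeds $1$; this being a Frobenius condition, the Chebotarev density theorem with an effective error term in the relevant ray class field of $K$ gives that, for each fixed split $\ell$, the density among split primes $q$ of those with $\ell\mid a(q^{e_q})$ is $1/\ell+O(1/\ell^{2})$. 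The threshold is then visible heuristically: if a split prime $\ell$ divides $n$, then $(n,a(n))=1$ forces $\ell\nmid a(q^{e_q})$ for \emph{each} of the $\sim\frac12\log\log x$ primes $q\mid n$, an event of ``probability'' $\approx(1-1/\ell)^{\frac12\log\log x}$, which is bounded away from $0$ only when $\ell$ exceeds a constant multiple of $\log\log x$. Thus $n$ must avoid the split primes up to size about $\log\log x$, at a cost $\prod_{\ell\le\log\log x,\ \ell\ \mathrm{split}}(1-1/\ell)$; since the split primes have density $\frac12$, this Mertens product is $\asymp(\log\log\log x)^{-1/2}=(L_3(x))^{-1/2}$ --- exactly the gain over \cite{kumar3}, where the analogous product, being over all primes, contributes only $(L_3(x))^{-1}$.

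\smallskip
\noindent\textbf{Step 3 (Counting and the constant).}
Fix a slowly growing truncation $y=y(x)$ (the optimal order being $\asymp\log\log x$; the constant $U_f$ emerges from a double limit, $x\to\infty$ then $y\to\infty$), and write $n=b\,n'$ with $b$ the $y$-smooth part of $n$. By Step 1, $(n,a(n))=1$ forces $b$ to be composed of split primes with $(b,a(b))=1$, $n'$ to be composed of split primes $>y$, and moreover $(n',a(n'))=1$, $(b,a(n'))=1$ and $(n',a(b))=1$. For each admissible $b$, a Landau--Selberg--Sathe / Selberg--Delange argument gives that the number of $y$-rough integers $n'\le x/b$ all of whose prime factors split in $K$ is $\sim C_K(y)\,(x/b)\bigl(\log(x/b)\bigr)^{-1/2}$ with $C_K(y)\asymp(\log y)^{-1/2}$, and the three remaining gcd conditions on $n'$ change this by a factor $1+o(1)$ only: conditions involving primes $>y$ are rare (such a prime divides $n'$ with probability $<1/y$, and by Tur\'an--Kubilius together with the normal order $\omega(a(q^{e}))\sim\log\log q$, the number of prime factors of $a(n')=\prod_{q\mid n'}a(q^{e_q})$ that could coincide with a prime factor of $n'$ is, for typical $n'$, far below $y$), while the conditions coupling $n'$ to the fixed primes of $b$ are absorbed into the $b$-sum and produce the local factors $\mu_2,\mu_3,\mu_f$ of the introduction. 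Summing over $b$ and evaluating at $y\asymp\log\log x$ yields
\[
\#\{n\le x:(n,a(n))=1\}=(1+o(1))\,\frac{U_f\,x}{\sqrt{\pi}\,\bigl(L_3(x)\log x\bigr)^{1/2}},
\]
with $U_f>0$ the finite, positive value in the double limit of the product of the $b$-sum with $(\log y)^{1/2}C_K(y)$; here the factor $(\log x)^{-1/2}$ is the density of the $K$-split-composed integers and $(L_3(x))^{-1/2}$ is the Mertens product of Step 2.

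\smallskip
The main obstacle is Step 3: proving the Selberg--Delange asymptotic for $y$-rough, $K$-split-composed integers \emph{and} that the three side conditions cost only $1+o(1)$, both uniformly in $b$ and in $y$, and then controlling the $y$-dependence finely enough to extract $U_f$. This requires Chebotarev with explicit error terms in the ray class fields of $K$ together with a large-sieve or higher-moment estimate for the ``collision'' events (a prime dividing some $a(q^{e_q})$ also dividing $n'$). By comparison, the reductions of Step 1 and the identification $\log y\asymp L_3(x)$, which fixes the $\bigl(L_3(x)\log x\bigr)^{1/2}$ shape, are routine, as is the treatment of the lower-order non-squarefree contributions.
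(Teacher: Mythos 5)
Your Steps 1 and 2 capture the right structural picture: $(n,a(n))=1$ forces $n$ to be composed (essentially) of split primes, the diagonal divisibilities $p\mid a(p^e)$ are automatically excluded for split ordinary $p$, and avoiding split primes up to a threshold $\asymp\log\log x$ produces, via Mertens over a density-$\frac12$ set, the decisive factor $(L_3(x))^{-1/2}$. This is precisely the mechanism in the paper (Proposition 3.4 and Lemma 5.1), where the $(L_3(x))^{-1/2}$ emerges from the product $\prod_{p<y_1,\,a(p)\neq0}(1-1/p)$ with $y_1=L_2(x)^{1+\epsilon}$.

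However, Step 3 is not a proof: it is an acknowledgment that a proof is needed. The assertion that the off-diagonal gcd constraints $(n',a(n'))=1$, $(b,a(n'))=1$, $(n',a(b))=1$ on the rough part ``cost only $1+o(1)$'' is exactly the content of Sections 6--8 of the paper, and your justification (``such a prime divides $n'$ with probability $<1/y$'', Tur\'an--Kubilius, normal order of $\omega(a(q^e))$) is a heuristic, not an argument; the concern is not how many prime factors $a(n')$ has, but whether any of them coincide with a prime of $n'$, and there is no way to conclude this from a first-moment count alone. The paper handles this by a different decomposition (splitting $n$ by its \emph{smallest} prime factor rather than by a smooth/rough cut), and then requires three non-trivial technical inputs that Step 3 leaves unaddressed: (i) Schaal's number-field large sieve to bound $\pi^*(x,p)=\#\{q\le x:a(q)\equiv0\pmod p,\ a(q)\ne0\}$ by $x/(p\log(x/p^2))$ uniformly; (ii) an effective Chebotarev theorem in the residual Galois extension cut out by $\bar\rho_{p,f}$, including a treatment of a possible Siegel zero via Stark's bound, to get $\pi^*(x,p)=\delta(p)\,\mathrm{Li}\,x+\cdots$ with $\delta(p)\asymp1/p$; and (iii) a Tur\'an-type second-moment estimate for $\nu(p,n)=\#\{q^m\|n:a(q^m)\equiv0\pmod p\}$ to show that for $p<L_2(x)^{1/2-\epsilon}$ almost every $n$ with $p\mid n$ has $p\mid(n,a(n))$. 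None of these appear in your sketch, nor does the final case analysis (the quantities $B_1,B_2,D_1,D_2,J_1,\dots,J_4$ in the paper) bounding the correction term $\sum_{q_1^m,q_2}\#\{n:q_1^m\|n,\ q_2\mid n,\ a(q_1^m)\equiv0\pmod{q_2}\}$. In short, you have correctly diagnosed the source of the $(L_3(x)\log x)^{-1/2}$ shape, but the uniform Chebotarev/large-sieve machinery and the moment arguments that make ``the side conditions are negligible'' rigorous are the actual proof, and they are missing.
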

The constant is given explicitly in terms of $f$ during the course 
of the proof.

\noindent
Our methods are based on the techniques of Erd{\"o}s \cite{erdos}, 
Serre \cite{serre1}, \cite{serre2} and those of
Ram Murty and the second author \cite{kumar4},
\cite{kumar2}, \cite{kumar3}, \cite{ram1},\cite{ram2}. 
Throughout this article, $p$ and $q$ will denote primes.

\section{Divisibility of fourier coefficients}\label{div}

Let $f$ be a normalized Hecke eigenform of weight $2$ for 
$\Gamma_0(N)$ with CM and let $K$ be the imaginary 
quadratic field associated to $f$. The Fourier
 expansion of $f$
at infinity is given by
$$
f(z) ~=~ \sum_{n=1}^{\infty} a(n)e^{2\pi i n z},
$$
where we are assuming that the $a(n)$'s are rational integers. 

\smallskip
For any prime $p$, let ${\Z}_p$ denote the ring of $p$-adic integers. By 
Eichler-Shimura-Deligne and since the Fourier coefficients of $f$ are in $\Z$, there 
is a continuous representation
$$
\rho_{p,f} ~:~ {\rm{Gal}}\left(\Bar{\Q}/{\Q}\right) \to GL_2({\Z}_p) .
$$
This representation is unramified outside the primes dividing $Np$. 
This means that
for any prime $q$ which does not divide $Np$ and for any prime $\mbox{\germ q}$
of $\Bar{\Q}$ over $q$, $\rho_{p,f}($Frob$_{\mbox{\germ q}})$ 
makes sense. We note that while
$\rho_{p,f}($Frob$_{\mbox{\germ q}})$ does depend on the choice of 
$\mbox{\germ q}$ over $q$, its
characteristic polynomial depends only on the conjugacy class of 
$\rho_{p,f}($Frob$_{\mbox{\germ q}})$(hence only on $q$) and is given by
\begin{eqnarray}\label{T}
T^2 - a(q)T + q.
\end{eqnarray}
We consider the reduction of the above representation modulo $p$
$$
\Bar{\rho}_{p,f} ~:~ {\rm{Gal}}\left(\Bar{\Q}/{\Q}\right) \to GL_2({\F}_p).
$$
The fixed field of the kernel of this representation determines a number field
$L$ which is a Galois extension of $\Q$ with group the image of 
$\Bar{\rho}_{p,f} $. 

\smallskip
We need to enumerate primes $q$ as above 
for which $a(q)\equiv 0\!\!\pmod{p}$. For this purpose, the following version 
of a theorem of Schaal \cite{schaal} is useful.

\begin{thm}\label{thmschaal}
Let $\mbox{\germ f}$ be an integral ideal of a number field $K$ of 
degree $n=r_1 + 2r_2$, where $r_1, r_2$ denote the number of real and complex 
embeddings respectively. Also let $\beta \in K$ denote an integer with 
$(\beta, {\mbox{\germ f}})=1$. Let $M_1, \cdots, M_{r_1}$ be nonnegative and
$P_1, \cdots, P_n$ be positive real numbers with $P_l = P_{l+ r_2}$,
$l= r_1+1, \cdots, r_1 + r_2$ and $P= P_1\cdots P_n$. Consider the number $B$ 
of integers $\omega \in K$ subject to the conditions:
 $$
 \omega \equiv  \beta \!\!\pmod{\mbox{\germ f}}, ~~~~~(\omega)~
{\text a~ prime~ ideal~} 
 $$
 $$  
   M_l \le  \omega^{(l)} \le M_l + P_l, ~~~~~l ~=~ 1, \cdots, r_1
  $$
  for real conjugates of $\omega$ and for complex conjugates
 $$
   |{\omega}^{(l)}| \le P_l, ~~~~~l= r_1+1, \cdots,n.
 $$
 If $P \ge 2$ and and the norm $\N{\mbox{\germ f}}$ satisfies
$$
\N{\mbox{\germ f}} \le \frac{P}{(\log P )^{(2r_1+ 2r_2 - 2 + 2/n)}}~,
$$
then one has \\
 $$
 B \ll  \frac{P}{\phi({\mbox{\germ f}}) \log{\frac{P}{\N{\mbox{\germ f}}}}}
\left\{1 + O\left(\log{\frac{P}{\N{\mbox{\germ f}}}}\right)^{-1/n}\right\},
 $$
where the implied constants depend only on $K$ and not on $\mbox{\germ f}$.
\end{thm}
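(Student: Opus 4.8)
The plan is to recast the count of $\omega$ as a sieve problem — bounding it by the number of $\omega\in\mathcal O_K$ lying in the prescribed class modulo ${\frak f}$, inside the prescribed box, and free of small prime ideal factors — and then to run Selberg's sieve (Schaal \cite{schaal} uses the number field large sieve instead, which serves equally well), arriving at a Brun--Titchmarsh type estimate. Embed $\mathcal O_K$ into $\R^{r_1}\times\C^{r_2}$ by $\omega\mapsto(\omega^{(1)},\dots,\omega^{(r_1+r_2)})$, and let $\mathcal R$ be the region defined by $M_l\le\omega^{(l)}\le M_l+P_l$ at the real places and $|\omega^{(l)}|\le P_l$ at the complex places, so that ${\rm vol}(\mathcal R)=\pi^{r_2}P$, while $\beta+{\frak f}$ embeds as a translate of a lattice of covolume $2^{-r_2}\sqrt{|d_K|}\,\N{\frak f}$. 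Set $\mathcal A=\{\omega\in\mathcal O_K:\omega\equiv\beta\!\!\pmod{\frak f},\ \omega\in\mathcal R\}$ and fix a sifting bound $z$. If $(\omega)$ is prime with $\N(\omega)>z$ then no prime ideal ${\frak p}$ of norm $\le z$ divides $(\omega)$; the $\omega$ with $(\omega)={\frak p}$, $\N{\frak p}\le z$, number $\ll z(\log z)^{r_1+r_2-1}$ (the log power absorbing the unit twists that fall into $\mathcal R$ when the unit rank is positive). Hence
$$
B\ \le\ S(\mathcal A;z)+O\!\left(z(\log z)^{r_1+r_2-1}\right),\qquad
S(\mathcal A;z):=\#\{\omega\in\mathcal A:{\frak p}\nmid(\omega)\ \text{for all }\N{\frak p}\le z\}.
$$

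\emph{Local counts.} For squarefree ${\frak d}$ coprime to ${\frak f}$, the Chinese Remainder Theorem merges $\omega\equiv\beta\pmod{\frak f}$ and $\omega\equiv 0\pmod{\frak d}$ into a single class modulo ${\frak d}{\frak f}$, so $\{\omega\in\mathcal A:{\frak d}\mid(\omega)\}=\mathcal R\cap(\text{a translate of the lattice }{\frak d}{\frak f})$. Since the successive minima of every ideal ${\frak a}$ are all $\asymp_K\N{\frak a}^{1/n}$, a lattice-point count gives
$$
\#\{\omega\in\mathcal A:{\frak d}\mid(\omega)\}=\frac{|\mathcal A|}{\N{\frak d}}+R_{\frak d},\qquad
|\mathcal A|=\frac{(2\pi)^{r_2}P}{\sqrt{|d_K|}\,\N{\frak f}}+O\!\left((P/\N{\frak f})^{1-1/n}\right),
$$
with $|R_{\frak d}|\ll 1+\bigl(P/(\N{\frak d}\,\N{\frak f})\bigr)^{1-1/n}$, estimated through the intrinsic volumes of $\mathcal R$. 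If $\mathcal R$ is so thin that the main term above is meaningless, argue directly: two distinct elements of $\mathcal A$ differ by a nonzero $v\in{\frak f}$ with $\prod_l|v^{(l)}|\ll P$, which forces $\N{\frak f}\ll P$ and $B\ll_K 1$, dominated by the claimed bound. In either case each ${\frak p}\nmid{\frak f}$ removes exactly one of $\N{\frak p}$ residue classes: this is a one-dimensional sieve.

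\emph{Running the sieve.} Selberg's sieve in $\mathcal O_K$ (equivalently Schaal's large sieve for $K$) gives, with level parameter $\xi$ and $z=\xi$,
$$
S(\mathcal A;\xi)\ \le\ \frac{|\mathcal A|}{G({\frak f},\xi)}+\sum_{\substack{\N{\frak d}<\xi^2\\({\frak d},{\frak f})=1}}3^{\nu({\frak d})}|R_{\frak d}|,\qquad
G({\frak f},\xi)=\sum_{\substack{\N{\frak d}<\xi\\({\frak d},{\frak f})=1}}\mu^2({\frak d})\prod_{{\frak p}\mid{\frak d}}\frac{1}{\N{\frak p}-1}.
$$
From $\#\{{\frak a}:\N{\frak a}\le t\}=\kappa_K t+O(t^{1-1/n})$ and the prime ideal theorem, $G({\frak f},\xi)\gg\frac{\phi({\frak f})}{\N{\frak f}}\log\xi$ (indeed $\frac{\phi({\frak f})}{\N{\frak f}}(c_K\log\xi+O(1))$ up to a convergent Euler factor). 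Choosing $\xi^2=(P/\N{\frak f})(\log P)^{-A}$ with $A=A(n)$ large enough, the remainder sum (bounded by $\xi^2(\log\xi)^{O(1)}+(P/\N{\frak f})^{1-1/n}\xi^{2/n}(\log\xi)^{O(1)}$) and the error term above are negligible against the main term — which is exactly what the hypothesis $\N{\frak f}\le P(\log P)^{-(2r_1+2r_2-2+2/n)}$ guarantees. Since $\log\xi=\tfrac12\log(P/\N{\frak f})+O(\log\log P)$, this gives $B\ll_K\frac{P}{\phi({\frak f})\log(P/\N{\frak f})}$; a finer accounting of the $O(1)$ in $G$, of the $O(\log\log P)$ in $\log\xi$, and of the remainders yields the relative error $O\bigl((\log(P/\N{\frak f}))^{-1/n}\bigr)$.

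\emph{The main obstacle} is uniformity in ${\frak f}$: one needs $G({\frak f},\xi)\gg\frac{\phi({\frak f})}{\N{\frak f}}\log(P/\N{\frak f})$ with an \emph{absolute} implied constant, which is delicate when ${\frak f}$ is divisible by all prime ideals of small norm, so that hardly any ${\frak d}$ coprime to ${\frak f}$ survive below $\xi$. There one uses that $\phi({\frak f})/\N{\frak f}$ is itself then as small as $\asymp(\log\log\N{\frak f})^{-1}$ by Mertens, so that both sides of the required inequality contract together; making this quantitative — and controlling every remainder sum over the whole range of ${\frak f}$ — is where the precise exponent $2r_1+2r_2-2+2/n$ emerges.
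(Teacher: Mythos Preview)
The paper does not prove this theorem at all: it is quoted verbatim as ``the following version of a theorem of Schaal \cite{schaal}'' and then used as a black box to derive \propref{prop99}. There is thus no proof in the paper to compare your attempt against.

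That said, your sketch is a reasonable route to a result of this shape. You set up the problem as a linear sieve in $\mathcal O_K$, count lattice points in residue classes via geometry of numbers, and run Selberg's sieve; Schaal's original paper proceeds instead through the large sieve in number fields, which gives the same Brun--Titchmarsh type bound but handles the remainder terms and the uniformity in $\mathfrak f$ somewhat differently. Your identification of the main obstacle --- getting $G(\mathfrak f,\xi)\gg\frac{\phi(\mathfrak f)}{\N\mathfrak f}\log\xi$ uniformly when $\mathfrak f$ soaks up all small prime ideals --- is apt, though your remedy (that $\phi(\mathfrak f)/\N\mathfrak f$ is then itself small) is more heuristic than a proof; one standard way through is a Rankin-type lower bound for $G$ that produces the factor $\phi(\mathfrak f)/\N\mathfrak f$ directly. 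The lattice-point error $|R_{\mathfrak d}|\ll 1+(P/\N\mathfrak d\,\N\mathfrak f)^{1-1/n}$ also needs the box $\mathcal R$ not to be too eccentric relative to the lattice, which your hypothesis on $\N\mathfrak f$ does not by itself guarantee for arbitrary $P_1,\dots,P_n$; Schaal's argument is arranged to be insensitive to this. These are genuine technical points rather than fatal gaps, but they are precisely where the work lies.
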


 Define 
$$
\pi^{*}(x,p) := \# \left\{q\le x \mid a(q) \equiv 0 \!\!\!\!\!\pmod{p}, 
~a(q) \ne {0}\right\}.
$$

Now suppose that $q$ is a prime which splits in $K$, say 
$q {\mathcal O}_K=\mbox{\germ q}_1 \mbox{\germ q}_2$ and that
$\pi_q, {\Bar\pi}_q$ are roots of the characteristic polynomial \eqref{T}. 
Then
\begin{eqnarray*}
a(q) ~=~ \pi_q + {\Bar\pi}_q  ~~~~~~~~{\rm{and}}~~~~q~=~{\pi}_q {\Bar\pi}_q.
\end{eqnarray*}
Also if $a(q) \ne 0$, then ${\pi}_q \in {\mathcal O}_K$ and $|{\pi}_q| = q^{1/2}$.
If $a(q)\equiv 0\!\!\pmod{p}$, then ${\pi} _q^2 \equiv - q\!\!\pmod{p}$.
Thus, if in addition
$q \equiv a \!\!\pmod{p}$, then ${\pi}_q \!\!\pmod{p}$
has a bounded number of possibilities (at most $4$ in fact). Also, 
the ideal $(\pi_q)$ is prime as $(\pi_q)({\Bar\pi}_q ) = (q)$. Thus,
\begin{eqnarray*}
\sum_{q \le x \atop {\pi_q \equiv \alpha \!\!\!\!\!\pmod{p} 
\atop q \equiv a \!\!\!\!\pmod{p},
~q {\mathcal O}_K =\mbox{\germ q}_1\mbox{\germ q}_2}} 1
~~\le \sum_{\omega \in {\mathcal O}_K \atop {(\omega)~{\text is~prime},
~|\omega| \le \sqrt{x}
\atop \omega \equiv \alpha \!\!\! \pmod{p}}} 1.
\end{eqnarray*}
Applying \thmref{thmschaal} with $\mbox{\germ f} = (p)$, the right hand 
side is seen to be
$$
\ll \frac{x}{p^2 \log {\frac{x}{p^2}}}
$$ 
for $p^2 \le x/\log x$. \\

Now summing over all $a\!\!\pmod{p}$ yields the following proposition.

\begin{prop}\label{prop99}
Let $f$ be a modular form as above. Then for $p^2 \le x/\log{x}$, we have
$$
\pi^{*}(x,p) ~\ll~ \frac{x}{p \log {\frac{x}{p^2}}}.
$$
\end{prop}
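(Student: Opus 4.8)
The plan is to deduce the bound almost immediately from the estimate for a single pair of residue classes established just above, the remaining work being the summation over residues modulo $p$. As a preliminary reduction, recall that a prime $q$ with $a(q)\neq 0$ must split in $K$; hence, apart from the $O(1)$ primes dividing $Np$ (which are negligible uniformly for $p^2\le x/\log x$, since the claimed bound already tends to infinity with $x$ throughout that range), every $q$ counted by $\pi^*(x,p)$ satisfies $q\mathcal O_K=\mathfrak q_1\mathfrak q_2$ with $\pi_q\in\mathcal O_K$, $(\pi_q)$ a prime ideal of norm $q$, and $|\pi_q|=\sqrt q\le\sqrt x$. Moreover $a(q)\equiv 0\pmod p$ forces $\pi_q^2\equiv -q\pmod p$, while $(\pi_q)$, being a prime ideal of norm $q\ne p$, is prime to $(p)$; so $\pi_q$ represents a unit class in $\mathcal O_K/(p)$.

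Next, I would group the primes $q$ counted by $\pi^*(x,p)$ according to the residue $q\equiv a\pmod p$. For each fixed $a$, the reduction $\pi_q\bmod p$ is a square root of the unit $-a$ in $\mathcal O_K/(p)$, hence takes at most $4$ values (as already noted; concretely $\mathcal O_K/(p)$ is one of $\F_p\times\F_p$, $\F_{p^2}$, $\F_p[t]/(t^2)$, in each of which a unit has $O(1)$ square roots). Since distinct $q$ give $\pi_q$ of distinct norm, hence distinct ideals $(\pi_q)$, this yields
$$
\pi^*(x,p)\ \ll\ 1+\sum_{a\bmod p}\ \sum_{\alpha^2\equiv -a\,(p)} B(\alpha),\qquad B(\alpha):=\#\{\omega\in\mathcal O_K:\ (\omega)\text{ prime},\ |\omega|\le\sqrt x,\ \omega\equiv\alpha\!\!\pmod p\},
$$
in which the inner sum has at most $4$ terms.

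It remains to bound $B(\alpha)$, and this is precisely a case of \thmref{thmschaal}. I would apply that theorem to $K$, which has $n=2$, $r_1=0$, $r_2=1$, with $\mathfrak f=(p)$, with $\beta$ a representative of $\alpha$ (which is prime to $\mathfrak f$ by the above), and with $P_1=P_2=\sqrt x$, so that $P=x\ge 2$ and the two box conditions both read $|\omega|\le\sqrt x$. The exponent $2r_1+2r_2-2+2/n$ equals $1$, so Schaal's admissibility condition becomes exactly $\mathrm N(\mathfrak f)=p^2\le x/\log x$ --- the hypothesis of the Proposition --- and the conclusion gives
$$
B(\alpha)\ \ll\ \frac{x}{\phi((p))\,\log(x/p^2)}\ \ll\ \frac{x}{p^2\,\log(x/p^2)},
$$
where we use $\phi((p))\asymp p^2$ with an implied constant depending only on $K$.

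Feeding this into the displayed inequality --- at most $4$ choices of $\alpha$ for each of the $p$ residue classes $a\bmod p$ --- gives $\pi^*(x,p)\ll x/(p\log(x/p^2))$, the term $O(1)$ being absorbed. I do not expect a serious obstacle here: the analytic substance lies entirely in \thmref{thmschaal} and in the prior identification of $\pi_q$. The two points that need a moment's care are the bounded-multiplicity statement for $\pi_q\bmod p$ in terms of $q\bmod p$, and the verification that the choice $P=x$ makes Schaal's admissibility condition coincide exactly with $p^2\le x/\log x$; the loss of a factor $p$ in passing from one residue class to all of them is precisely the gap between the $1/p^2$ of \thmref{thmschaal} and the $1/p$ asserted here.
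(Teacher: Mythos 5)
Your argument matches the paper's proof step for step: reduce to the split primes, observe that for each residue class $a \bmod p$ the element $\pi_q \bmod p$ takes at most four values, bound the count in each class by applying Theorem~\ref{thmschaal} with modulus $(p)$ and box sides $P_1=P_2=\sqrt{x}$, and then sum over the $p$ residue classes. Your explicit check that $r_1=0$, $r_2=1$, $P=x$ make Schaal's admissibility threshold coincide exactly with $p^2\le x/\log x$ is a useful detail the paper leaves implicit, but the underlying route is the same.
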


Now using \propref{prop99} and partial summation, we see that for primes 
$p \le \sqrt{x/\log x}$,

\begin{eqnarray*}
\sideset{}{^*}{\sum}_{\substack{p^2 \log p \le q \le x \atop a(q)
\equiv 0 \!\!\! \pmod{p}}} \frac{1}{q}  ~\ll~ \frac{1}{p} 
\int_{p^2 \log p}^{x} \frac{dt}{t \log{\frac{t}{p^2}}}
~\ll~ \frac{1}{p} \log\log {\frac{x}{p^2}}~,
\end{eqnarray*}
where $\sideset{}{^*}{\sum}_{\substack{y \le q\le x}}$ means that the summation is over all 
primes $y \le q \le x$ for which $a(q) \ne {0}$. 
Thus, we have the following result.
\begin{prop}\label{prop100}
Let $f$ be a modular form as above and also let $p^2 \le {x}/{\log x}$ 
be a fixed prime. Then one has
\begin{eqnarray*}
\sideset{}{^*}{\sum}_{\substack{p^2 \log p \le q \le x  
\atop a(q) \equiv 0 \!\!\pmod{p}}} \frac{1}{q} ~\ll~ \frac{1}{p}
{L_2\left(\frac{x}{p}\right)},
\end{eqnarray*}
where $\sideset{}{^*}{\sum}_{\substack{y \le q\le x}}$ means that the summation is over all 
primes $y \le q \le x$ for which $a(q) \ne {0}$. 
\end{prop}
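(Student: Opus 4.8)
The heavy lifting here is already in place: Proposition~\ref{prop99} (via Schaal's theorem) is the substantive input, and Proposition~\ref{prop100} is close to a formality apart from one endpoint subtlety that takes a little care. The plan is to recognise that the assertion is a mild repackaging of the estimate displayed immediately before its statement, namely that the sum in question is $\ll p^{-1}L_2(x/p^2)$, and then to replace $L_2(x/p^2)$ by $L_2(x/p)$. For the replacement: the hypothesis $p^2 \le x/\log x$ gives $x/p^2 \ge \log x$, so $L_2(x/p^2) = \log\log(x/p^2)$ is defined and positive once $x$ is large; since $t\mapsto\log\log t$ is increasing and $x/p^2\le x/p$, we get $L_2(x/p^2)\le L_2(x/p)$, and $L_2(x/p)$ is itself legitimate because $x/p\ge\sqrt{x\log x}\to\infty$. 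This yields the asserted bound at once.

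In case one prefers not to quote the displayed estimate, here is how it is produced. Write the sum as a Stieltjes integral $\int_{p^2\log p}^{x} t^{-1}\,dw(t)$ against the counting function $w(t) = \#\{p^2\log p \le q \le t : a(q)\equiv 0 \!\!\pmod{p},\ a(q)\ne 0\}\le\pi^*(t,p)$; integrate by parts and insert Proposition~\ref{prop99}, which gives $\pi^*(t,p)\ll t(p\log(t/p^2))^{-1}$. The substitution $u = t/p^2$ then evaluates $p^{-1}\int_{p^2\log p}^{x}\frac{dt}{t\log(t/p^2)}$ as $p^{-1}(L_2(x/p^2)-L_2(\log p))\le p^{-1}L_2(x/p^2)$.

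The one genuinely non-mechanical step — and where I would expect to spend what real effort the proof requires — is that Proposition~\ref{prop99}, applied with a running upper limit $t$ in place of $x$, requires $p^2\le t/\log t$, and this just barely fails for $t$ in a thin initial window $[p^2\log p,\ p^2(\log p)^{1+\delta}]$ above the lower limit of summation. I would split that window off and handle it elementarily: if $a(q)\ne 0$ and $p\mid a(q)$ then $a(q)=mp$ with $1\le|m|\le 2\sqrt{q}/p\ll(\log p)^{(1+\delta)/2}$, using $|a(q)| = |\pi_q+\overline{\pi_q}|\le 2|\pi_q| = 2\sqrt{q}$; and for each admissible trace $mp$ the relevant primes $q$ correspond to elements $\pi_q\in{\mathcal O}_K$ lying on a bounded segment of a fixed vertical line in $\C$, of which there are $O(\sqrt{q})$. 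A dyadic decomposition of the window then bounds its contribution to the sum by $\ll L_2(p)/p$, which is $\ll L_2(x/p)/p$ since $p\le\sqrt{x/\log x}$. Past this window Proposition~\ref{prop99} applies verbatim, and for $p$ in any fixed bounded range the whole sum is $\le\sum_{q\le x}q^{-1}\ll L_2(x)\ll L_2(x/p)$. With this bookkeeping done the partial summation goes through, and the estimate — hence the proposition — follows.
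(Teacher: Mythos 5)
Your argument is correct and follows the paper's own route: partial summation against $\pi^*(t,p)$ together with \propref{prop99}, followed by the monotonicity observation $L_2(x/p^2) \le L_2(x/p)$. The endpoint concern you flag is real -- at $t = p^2\log p$ the hypothesis $p^2 \le t/\log t$ of \propref{prop99} indeed fails (it would require $\log p \ge 2\log p + \log\log p$), and the paper writes $\frac{1}{p}\int_{p^2\log p}^{x}\frac{dt}{t\log(t/p^2)}$ without comment. Your lattice-point patch over the thin window is a legitimate and self-contained repair. A slightly quicker one, closer in spirit to what the paper is implicitly doing, is to use monotonicity of $\pi^*$: for $t$ in the window $[\,p^2\log p,\ 3p^2\log p\,]$ one has
\[
\pi^*(t,p)\ \le\ \pi^*(3p^2\log p,\,p)\ \ll\ \frac{p\log p}{\log\log p}\ \asymp\ \frac{t}{p\log(t/p^2)},
\]
where the middle bound is \propref{prop99} applied at $t' = 3p^2\log p$ (whose hypothesis reduces to $p \gg \log p$, harmless for $p$ large, with small $p$ handled trivially as you note). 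Thus the displayed bound $\pi^*(t,p) \ll t/(p\log(t/p^2))$ in fact holds throughout $[\,p^2\log p,\ x\,]$ and the partial summation proceeds verbatim, recovering the paper's $\frac{1}{p}L_2(x/p^2) \le \frac{1}{p}L_2(x/p)$. Either way, the proposition holds.
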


\begin{rmk}\label{remark 1}
We note that the contribution from the remaining primes \linebreak
$q \le p^2 \log p$ is
\begin{eqnarray*}
\sideset{}{^*}{\sum}_{\substack{q \le p^2 \log p \atop a(q) \equiv 0 \!\!\pmod{p}}}
 \frac{1}{q} ~\ll~ \frac{L_2(p)}{\log p}.
\end{eqnarray*}
However, we shall not make use of this estimate.
\end{rmk}

\section{Vanishing of $a(p)$}

Let $E$ be the elliptic curve defined over $\Q$ corresponding to the 
modular form $f$ of level $N = N_E$. 
As $f$ is of $CM$-type corresponding to the imaginary
quadratic field $K$, we know that $E$ has $CM$ by an order in $K$. 
A prime $p$ is supersingular for $E$ if $E$ has good
reduction at $p$ and its reduction $E_p$ has multiplication by an order
in a quaternion division
algebra. It is well known that a prime $p$ of good reduction
is supersingular if and only if 
\begin{equation}\label{supersingular2}
|E(\F_p)| \equiv 1 \pmod{p}.
\end{equation}
In particular, the set of primes supersingular for $E$ only depends on 
the isogeny class of $E$. For $p \geq 5$, \eqref{supersingular2} 
is equivalent to the condition $a(p) \ =\ 0$. 

\smallskip

Let ${\pi}_E(x)$ denote the  number of primes $p \le x$ such that $p$ is a 
supersingular prime for $E$. We know that
$$
\pi_E(x) \ge \pi_K^{-}(x),
$$ 
where $\pi_K^{-}(x)$ denotes the number of primes $p\le x$ that remain prime 
in $K$. In fact, the following more precise result is due to Deuring
(see \cite{Lang}, Chapter 13, Theorem 12).
\begin{prop}\label{Deuring}
Let $E$ be an elliptic curve defined over $\Q$
with multiplication by an order in an imaginary quadratic field $K$. 
Let $p$ be a prime of good reduction for $E$. Then 
$p$ is supersingular for $E$ if and only if $p$ ramifies or remains prime
in $K$. 
\end{prop}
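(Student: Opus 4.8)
The plan is to read off the reduction type of $E$ at $p$ from the way the endomorphism ring $\mathcal O=\mathrm{End}_{\overline\Q}(E)$, an order in $K$, sits inside the endomorphism ring of the reduced curve, and then to compare this with the decomposition of $p$ in $K$. Let $E_p$ denote the reduction of $E$, viewed over $\overline{\F}_p$, at a prime above $p$.

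First I would invoke the standard fact that, since $E$ has good reduction at $p$ (and good reduction persists under base change, so the CM, although a priori only defined over an extension of $\Q$, still reduces), the reduction of endomorphisms gives an injective ring homomorphism $\mathrm{End}(E)\hookrightarrow\mathrm{End}(E_p)$. Tensoring with $\Q$ therefore embeds $K=\mathcal O\otimes\Q$ into $D:=\mathrm{End}(E_p)\otimes\Q$. I would then recall the dichotomy for elliptic curves over $\overline{\F}_p$: either $E_p$ is ordinary and $D$ is an imaginary quadratic field, or $E_p$ is supersingular and $D$ is the quaternion algebra $B$ over $\Q$ ramified precisely at $p$ and $\infty$; the latter case is, by definition, the supersingular one.

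For the implication ``$p$ supersingular $\Rightarrow$ $p$ ramifies or remains prime in $K$'', suppose $D=B$, so that $K$ embeds into $B$. By the local--global criterion for embeddings of a quadratic field into a quaternion algebra over $\Q$, this forces $K\otimes\Q_v$ to be a field at every place $v$ where $B$ ramifies, namely at $v=\infty$ and $v=p$; at $\infty$ this holds because $K$ is imaginary, so $K\otimes\Q_p$ must be a field, i.e. $p$ does not split in $K$. For the converse I would establish the contrapositive: if $E_p$ is ordinary then $p$ splits in $K$. Let $\phi\in\mathrm{End}(E_p)$ be the Frobenius endomorphism of $E_p/\F_p$; it satisfies $\phi^2-a\phi+p=0$ with $a=p+1-\#E_p(\F_p)\in\Z$, ordinariness is exactly the condition $p\nmid a$ (compare \eqref{supersingular2}), and $a^2-4p<0$ by Hasse, so $\phi\notin\Q$. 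Then $D$, being an imaginary quadratic field containing both $K$ and $\Q(\phi)$, equals each of them; in particular $\phi\in\mathcal O_K$ and $\Z[\phi]$ is an order in $K$ of discriminant $a^2-4p$. Since $d_K$ divides $a^2-4p$ while $a^2-4p\equiv a^2\not\equiv 0\pmod p$, we get $p\nmid d_K$, so $p$ is unramified in $K$. On the other hand $(\phi)(\overline\phi)=(\phi\overline\phi)=(p)$ exhibits $(p)$ as a product of two proper nonzero ideals (neither $\phi$ nor $\overline\phi$ is a unit, their product being $p$), so $(p)$ is not prime and $p$ is not inert. Being unramified and non-inert, $p$ splits, which completes the argument.

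No single step here is deep; the work is in assembling the ingredients correctly. The points that deserve care are: that $\mathcal O$ reduces into the \emph{geometric} endomorphism ring $\mathrm{End}_{\overline{\F}_p}(E_p)$ (which is what makes the supersingular structure theorem applicable), whereas the discriminant computation must use the Frobenius over $\F_p$, whose trace $a$ is a rational integer; and getting the precise citations for the embedding criterion for quaternion algebras and for Deuring's description of the endomorphism rings of ordinary and supersingular elliptic curves. I would also note that the argument is uniform in $p$ --- in particular it requires no hypothesis $p\ge 5$, that restriction being relevant only when one wants to replace the congruence \eqref{supersingular2} by the cleaner statement $a(p)=0$.
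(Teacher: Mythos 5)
The paper does not prove this proposition at all: it is presented purely as a citation to Deuring's theorem, with a pointer to Lang's \emph{Elliptic Functions}, Chapter~13, Theorem~12. Your argument, by contrast, is a complete and correct proof, and it follows the standard modern route to Deuring's criterion. The ingredients are assembled correctly: injectivity of reduction on $\mathrm{End}$ gives $K\hookrightarrow D=\mathrm{End}_{\overline{\F}_p}(E_p)\otimes\Q$; Deuring's structure dichotomy identifies $D$ as imaginary quadratic (ordinary) or as the quaternion algebra $B_{p,\infty}$ (supersingular); the local--global embedding criterion for a quadratic field into a quaternion algebra handles the forward direction, using that $K$ is imaginary to dispose of the place at infinity; and in the ordinary direction the $\F_p$-Frobenius $\phi$ with $\phi^2-a\phi+p=0$, $p\nmid a$, forces $D=K=\Q(\phi)$, after which $p\nmid a^2-4p$ gives $p\nmid d_K$ (unramified) and $(\phi)(\overline\phi)=(p)$ with $N(\phi)=p$ rules out inertness. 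The small checks you flag are the right ones: $a^2-4p<0$ is strict because $p$ prime cannot be a square, and the $p\geq 5$ hypothesis is irrelevant here (it enters only in Proposition~3.2 to pass from $|E(\F_p)|\equiv 1\pmod p$ to $a(p)=0$). In short, you have supplied a self-contained proof of a fact the paper deliberately leaves to the literature; the paper's choice buys brevity, yours buys transparency, and both are sound.
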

In particular, this implies the following result.
\begin{prop}\label{supersingular}
Suppose that $p \geq 5$. 
With $E$ as in the previous proposition, we have $a(p)=0$ if and only if
$p$ is a prime of bad reduction or $p$ doesn't split in $K$. 
\end{prop}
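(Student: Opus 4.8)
The plan is to split the primes $p \ge 5$ into those of good reduction and those of bad reduction for $E$, and in each case to reduce the vanishing of $a(p)$ to the splitting type of $p$ in $K$.

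For a prime $p \ge 5$ of good reduction I would invoke the equivalence recalled just before the statement: for such $p$, the congruence \eqref{supersingular2} holds --- equivalently $a(p) = 0$ --- if and only if $p$ is supersingular for $E$. Deuring's criterion (\propref{Deuring}) then says that $p$ is supersingular precisely when $p$ ramifies in $K$ or remains prime in $K$, i.e.\ precisely when $p$ does not split in $K$. Thus, for good $p \ge 5$, we get $a(p) = 0$ if and only if $p$ does not split in $K$.

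The second ingredient is to show that $a(p) = 0$ at every prime $p$ of bad reduction. Here I would use the standard fact that a CM elliptic curve over $\Q$ has potentially good reduction at every finite place; this is a consequence of the theory of complex multiplication, since after restriction to $G_K$ the associated $\ell$-adic representation is a sum of two Hecke characters whose inertia types are of finite order, so inertia at any prime $p \ne \ell$ acts through a finite quotient and the criterion of N\'eron--Ogg--Shafarevich applies. In particular $E$ has no prime of multiplicative reduction, so at a bad prime the reduction is additive, and then $a(p) = 0$ by the usual description of the Hecke eigenvalue of a weight-$2$ newform at such a prime. (Alternatively, this vanishing can be read off the identity $a(p) = \sum_{N(\mbox{\germ a}) = p,\, (\mbox{\germ a},\mbox{\germ m}) = 1} \Psi(\mbox{\germ a})$, using that $\mbox{\germ m}$ is stable under $\Gal(K/\Q)$ because $f$ has rational Fourier coefficients.)

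Combining the two cases yields exactly the assertion: $a(p) = 0$ holds if and only if $p$ is a prime of bad reduction, or $p$ is a prime of good reduction not splitting in $K$; and since every bad prime is already covered by the phrase ``$p$ does not split in $K$ or $p$ is bad'', this is the claimed equivalence. The step carrying the real content beyond \propref{Deuring} and the $p \ge 5$ supersingularity criterion is the bad-prime analysis --- specifically the input that a CM elliptic curve over $\Q$ never has multiplicative reduction, which is what forces $a(p) = 0$ at the bad primes.
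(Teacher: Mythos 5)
Your proposal matches the paper's argument exactly: split primes into good and bad reduction, handle the good primes $p \ge 5$ via the supersingularity criterion together with Deuring's result (Proposition~\ref{Deuring}), and handle the bad primes by observing that a CM elliptic curve over $\Q$ has additive (never multiplicative) reduction, forcing $a(p)=0$ there. The only difference is cosmetic: you supply the justification (potentially good reduction via N\'eron--Ogg--Shafarevich) for the additive-reduction claim, which the paper simply asserts.
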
 
As $E$ has complex multiplication, it has additive reduction at 
primes of bad reduction and thus $a(p)=0$. The rest follows from Deuring's
result. 
\medskip\par
Finally, we record the following result which will be useful in
establishing the main result.
\begin{prop}\label{mertens}
There is a constant $\mu_f > 0$ so that
$$
\prod_{5 \leq p<z \atop a(p) \neq 0} \left( 1 - \frac{1}{p} \right)
\ =\ \frac{\mu_f}{(\log z)^\half}\ +\ O_f\left(\frac{1}{(\log z)^{3/2}}\right).
$$
\end{prop}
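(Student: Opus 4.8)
The plan is to understand the primes $p$ with $a(p) \ne 0$ via the characterization in \propref{supersingular}: for $p \ge 5$, $a(p) \ne 0$ exactly when $p$ has good reduction for $E$ and $p$ splits in $K$. Since there are only finitely many primes of bad reduction, their contribution to the product is a harmless constant factor that can be absorbed into $\mu_f$. So the heart of the matter is the product over primes $p$ that split in the imaginary quadratic field $K$. First I would write
\begin{eqnarray*}
\prod_{5 \le p < z \atop a(p) \ne 0}\left(1 - \frac{1}{p}\right)
~=~ C_1 \prod_{5 \le p < z \atop p \text{ splits in } K}\left(1 - \frac{1}{p}\right),
\end{eqnarray*}
where $C_1$ is a nonzero constant coming from the finitely many bad primes (those dividing $N$), and then take logarithms to reduce everything to estimating $\sum_{p < z,\ p \text{ splits}} \frac{1}{p}$ together with the convergent tail $\sum_p \frac{1}{p^2} + \cdots$.

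The key analytic input is the asymptotic for the sum of reciprocals of split primes. By Chebotarev (or directly, by the factorization of the Dedekind zeta function $\zeta_K(s) = \zeta(s) L(s, \chi_K)$ where $\chi_K$ is the quadratic character attached to $K$), the primes that split in $K$ have Dirichlet density $\frac{1}{2}$, and more precisely one has a Mertens-type statement
\begin{eqnarray*}
\sum_{p < z \atop p \text{ splits in } K} \frac{1}{p} ~=~ \frac{1}{2}\log\log z ~+~ B_K ~+~ O\!\left(\frac{1}{\log z}\right)
\end{eqnarray*}
for a suitable constant $B_K$; this follows from partial summation against the prime-counting function for split primes, using the zero-free region for $L(s,\chi_K)$ to get the $O(1/\log z)$ error term (equivalently, from the analytic behaviour of $\log L(s,\chi_K)$, which is regular and nonzero at $s = 1$, contributing only a constant). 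Exponentiating, $\prod_{p<z,\ p\text{ splits}}(1 - 1/p)$ behaves like $c/(\log z)^{1/2}$; combining the split-prime product with the convergent product over the non-split and bad primes (and the primes $p < 5$, which are excluded, accounting for a further constant) gives a single constant $\mu_f > 0$ with
\begin{eqnarray*}
\prod_{5 \le p < z \atop a(p) \ne 0}\left(1 - \frac{1}{p}\right) ~=~ \frac{\mu_f}{(\log z)^{1/2}} ~+~ O_f\!\left(\frac{1}{(\log z)^{3/2}}\right),
\end{eqnarray*}
where the secondary term $O_f((\log z)^{-3/2})$ comes from expanding $\exp$ of the $O(1/\log z)$ error against the main term $(\log z)^{-1/2}$.

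The main obstacle — really the only point requiring care — is getting the error term sharp enough, namely $O((\log z)^{-3/2})$ rather than merely $o((\log z)^{-1/2})$. This forces one to have the Mertens sum over split primes with error $O(1/\log z)$, which in turn needs a standard but nontrivial application of the prime ideal theorem (or the Chebotarev density theorem with error term) for $K$, together with a clean partial-summation argument; the positivity and finiteness of $\mu_f$ is then immediate since it is an absolutely convergent product times $\exp(B_K + \text{const})$ times a nonzero archimedean factor. I would also double-check that the condition $p \ge 5$ (and the exclusion of ramified primes, of which there are finitely many, those dividing $\mathrm{disc}(K)$) only shifts $\mu_f$ by an explicit nonzero constant, which it does.
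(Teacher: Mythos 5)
Your proposal is correct and follows essentially the same route as the paper: reduce, via Proposition~\ref{supersingular}, to the product over split primes (absorbing the finitely many bad primes and the primes $p<5$ into the constant), then invoke a Mertens-type estimate for $K$ to extract the main term $(\log z)^{-1/2}$ with error $O((\log z)^{-3/2})$. The only difference is one of packaging: the paper cites Rosen's generalization of Mertens' theorem for number fields, which directly gives $\prod_{\N\p\le z}(1-1/\N\p)^{-1}=e^{\gamma}\alpha_K\log z+O_K(1)$, then separates the split, inert, and ramified ideals to isolate the split-prime factor; you instead take logarithms and quote the Mertens sum $\sum_{p<z,\,p\ \mathrm{split}}1/p=\tfrac12\log\log z+B_K+O(1/\log z)$ from the prime ideal theorem (or Chebotarev with error term) and then exponentiate. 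The analytic input—the prime ideal theorem for $K$ with a power-of-log savings in the error—is the same in both, and your secondary-term bookkeeping is right; citing Rosen simply streamlines the write-up.
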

\begin{proof}
Using Rosen \cite{Rosen}, Theorem 2, we have
$$
\prod_{\N\p \leq z} \left( 1 - \frac{1}{\N\p} \right)^{-1}
\ =\ e^\gamma\alpha_K\log z\ +\ O_K(1).
$$
Here, the product is over primes $\p$ of $K$ and
$ \alpha_K $  is the residue at $s=1$ of the Dedekind zeta
function $\zeta_K(s)$. 
Note that $\alpha_K = L(1,\chi_K)$ where $\chi_K$ is the quadratic
character defining $K$ and $L(s,\chi_K)$ is the associated $L$-function.
It follows that
$$
\prod_{\N\p \leq z} \left( 1 - \frac{1}{\N\p} \right)
\ =\ \frac{e^{-\gamma}L(1,\chi_K)^{-1}}{\log z}
\ +\ O_K\left(\frac{1}{(\log z)^2}\right). 
$$
Thus, 
$$
\prod_{p \leq z \atop p {\rm\ splits\ in\ } K}
\left( 1 - \frac{1}{p} \right)\ =
\ \frac{\beta_K}{(\log z)^\half}\ +
\ O_K\left(\frac{1}{(\log z)^{3/2}}\right)
$$
where
$$
\beta_K\ =\ e^{-\gamma/2}L(1,\chi_K)^{-1/2}\prod_{p {\rm\ inert}}
\left( 1-\frac{1}{p^2}\right)^{-\half}\prod_{p|d_K}
\left( 1 - \frac{1}{p} \right)^{-\half}.
$$
By Proposition 
\ref{supersingular}, for $p \geq 5$, 
we have $a(p) \neq 0$ if and only if $p$
is a prime of good reduction and splits in $K$. This proves the
result with
$$
\mu_f\ =\ \beta_K\prod_{p {\rm\ splits} \atop p|6N}
\left( 1 - \frac{1}{p} \right)^{-1}.
$$
\end{proof}

\section{The number of non-zero fourier coefficients}

We begin by considering a slightly more general setting as in 
Serre \cite[\S 6]{serre2} which parts of this
section follow closely.
Let $n \mapsto a(n)$ be a multiplicative function and define the 
multiplicative function 
\begin{eqnarray*}
& a^{0}(n) =  \left\{\begin{array}{ll}
                1    & ~~~~\mbox{if $a(n) \ne 0$},\\
                0    & ~~~~\mbox{if $a(n) = 0$}.
                      \end{array} \right.
\end{eqnarray*}
We want the asymptotic behaviour of 
\begin{eqnarray*}
M_{a,d}(x) := \# \left\{ n\le x \mid a(n) \ne 0, ~~d|n \right\}
 = \sum_{dn \le x} a^{o}(dn),
\end{eqnarray*}
for any positive integer $d$. 
\subsection{The case $d=1$}\label{disone}
Consider the Dirichlet series
$$
\phi(s)\ =\ \sum_n \frac{a^0(n)}{n^s}\ =\ \prod_{p}\phi_p(s)
$$
where
$$
\phi_p(s)\ =\ \sum_{m=0}^\infty a^0(p^m)p^{-ms}.
$$
Let
$$
P_a(x)\ =\ \#\{p \leq x ~|~ a(p)=0\}.
$$
Suppose we know that 
\begin{eqnarray}\label{500}
P_{a}(x) = \lambda \frac{x}{\log x} ~+~ O\left(\frac{x}{(\log x)^{1+
\delta}} \right)
\end{eqnarray}
for some $\delta > 0$ and $\lambda < 1$. Then
$$
\sum_{p\le x} a^{0}(p) = (1-\lambda)\frac{x}{\log x} 
~+~ O\left(\frac{x}{(\log x)^{1+\delta}}\right)
$$
and
$$
\sum_{p}\frac{a^{0}(p)}{p^s} = (1 - \lambda) 
\log\left(\frac{1}{s-1}\right) ~+~ \epsilon_1(s),
$$
where $\epsilon_1(s)$ is analytic in a 
neighbourhood of $s=1$. Moreover,
\begin{eqnarray*}
\log(\phi(s)) =  
\sum_{p} \log(\phi_p (s)) = \sum_p \frac{a^{0}(p)}{p^s} 
~+~ \epsilon_2(s),
\end{eqnarray*}
where $\epsilon_2(s)$ is also analytic in a neighbourhood of $s=1$. Thus,
\begin{eqnarray*}
\log(\phi(s)) = (1-\lambda) \log\left(\frac{1}{s-1}\right) 
~+~  \epsilon_3(s)
\end{eqnarray*}
and 
$$
\phi(s) = \frac{e^{\epsilon_3(s)}}{(s-1)^{1-\lambda}}.
$$
A set of primes $P$ is called ``frobenien'' (in the sense of 
Serre (\cite{serre1}, Th\'eor\`eme 3.4))
if there is a finite Galois extension $K/{\Q}$
and a conjugacy-stable subset $H \subseteq G = {\rm Gal}(K/\Q)$ such that for $p$
sufficiently large, $p \in P$ if and only if $\sigma_p(K/{\Q}) \subseteq H$.
Here $\sigma_p(K/{\Q})$ denotes the conjugacy class of Frobenius 
automorphism associated to $p$.
If the set of primes enumerated by $P_a$ is ``frobenien'', 
we have
\begin{eqnarray}\label{100}
M_{a,1}(x)\ =\ \frac{u_{a}~ x}{\Gamma(1-\lambda) (\log x)^{\lambda}}
\ +\ O\left(\frac{x}{(\log x)^{\lambda+1}}\right),
\end{eqnarray}
where $u_{a} = e^{\epsilon_3(1)}$. Moreover, in the case that 
$\lambda = 0$, if one has the additional hypothesis that
\begin{eqnarray}\label{101}
\sum_{a(p) = 0} \frac{1}{p} < \infty
\end{eqnarray}
then \cite[p. 167]{serre2} states that
\begin{eqnarray}\label{102}
u_{a} = \prod_{a(p)=0} \left(1 - \frac{1}{p}\right).
\end{eqnarray}
\begin{rmk}
If we do not assume that $P_a$ enumerates a ``frobenien'' set of
primes, we can still invoke a Tauberian theorem to get an asymptotic
formula
$$
M_{a,1}(x)\ \sim\ \frac{u_a x}{\Gamma(1-\lambda)(\log x)^{\lambda}}.
$$
\end{rmk}
\medskip

In the next two subsections, we consider those arithmetic functions 
for which $P_a$ is frobenien.

\subsection{Convolution with a secondary function}\label{convolution}$~~~$\\

Now consider another function $n \mapsto b(n)$ with
the following properties: 
\begin{enumerate}
\item There is an integer $d$ so that $b(n) \neq 0$
implies that all prime divisors of $n$ are prime divisors of $d$.

\item We have $|b(n)| \leq\ 4^{\nu(n)}$ where $\nu(n)$ is the 
number of distinct prime divisors of $n$.
\end{enumerate}
Let us set
\begin{equation}\label{xi}
\xi_d(s)\ =\ \sum_{n=1}^\infty \frac{b(n)}{n^s}.
\end{equation}
We see that
$$
\sum_{m \leq x} |b(m)| \ \leq\ \sum_{p|m \Rightarrow p|d}
4^{\nu(m)}(x/m)^{1/4}\ =
\ x^{1/4}\prod_{p|d}\left( 1 + \frac{4}{p^{1/4}-1}\right).
$$
We observe that 
$$
\prod_{p|d} \left( 1 + \frac{4}{p^{1/4}-1}\right)
\ \ll\ 2^{\nu(d)}
$$
and so
\begin{equation}\label{sumofb}
\sum_{m \leq x} |b(m)|\ \ll\ x^{1/4}2^{\nu(d)}.
\end{equation}
Moreover,  using \eqref{sumofb},
we have
\begin{equation}\label{tailofx2}
\sum_{z < m < 2z} \frac{|b(m)|}{m}\ \ll\ z^{-3/4}2^{\nu(d)}.
\end{equation}
Let $c = a^0*b$ be the Dirichlet convolution and
consider the function
$$
\psi(s)\ =\ \sum_n \frac{c(n)}{n^s}\ =\ \phi(s)\xi_d(s).
$$
Then, we have
$$
\sum_{n \leq x} c(n)\ =\ \sum_{m \leq x} b(m) \sum_{r \leq x/m} a^0(r).
$$
The contribution from terms with $\sqrt{x}\leq m \leq x$ is 
$$
\leq x \sum_{\sqrt{x}\leq m \leq x} \frac{|b(m)|}{m}.
$$
Decomposing the sum into dyadic intervals $U < m \leq 2U$ and 
using \eqref{tailofx2} shows that the 
summation is $O(x^{-3/8}2^{\nu(d)})$
and hence the whole expression is $O(x^{5/8}2^{\nu(d)})$.
Assuming that \eqref{100} holds (that is, that $P_a$
enumerates a ``Frobenien'' set of primes), we have
\begin{equation}\label{sumcn}
\sum_{n \leq x} c(n)\ =\ 
\sum_{m \leq \sqrt{x}} b(m)\left\{
\left(\frac{u_a}{\Gamma(1-\lambda)} + O\left(\frac{1}{\log x}\right)\right)
\frac{x}{m(\log x/m)^\lambda}\right\}\ +\ O(x^{5/8}2^{\nu(d)}).
\end{equation}
Note that
$$
\left( \log \frac{x}{m} \right)^{-\lambda}
\ =\ \left( \log x \right)^{-\lambda}\ +\ O((\log m)(\log x)^{-\lambda-1}).
$$
Using this and \eqref{tailofx2}, the right hand side of \eqref{sumcn} is equal to
$$
\left(\frac{u_a}{\Gamma(1-\lambda)} + 
O\left(\frac{1}{\log x}\right)\right)\frac{x}{(\log x)^\lambda}
\left( \xi_d(1)\ +\ O(x^{-3/8}(\log x)^{-1}2^{\nu(d)})\right)\ +
\ O(x^{5/8}2^{\nu(d)}).
$$
Summarizing this discussion, we have proved the following.

\begin{prop}\label{averageofc}
We have
$$
\sum_{n\leq x} c(n)\ =\ \frac{u_a\xi_d(1)}{\Gamma(1-\lambda)}
\frac{x}{(\log x)^\lambda}\ +
\ O\left(\frac{x2^{\nu(d)}}{(\log x)^{\lambda+1}}\right)
$$
uniformly in $d$. 
\end{prop}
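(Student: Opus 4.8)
The plan is to feed the convolution identity $c = a^0 * b$ into the asymptotic already recorded for $M_{a,1}$. First I would write
$$
\sum_{n \le x} c(n) \;=\; \sum_{m \le x} b(m)\, M_{a,1}(x/m)
$$
and split the range of $m$ at $m = \sqrt x$. For the tail $\sqrt x \le m \le x$ I would use the trivial bound $M_{a,1}(x/m) \le x/m$, so that this part is $\le x \sum_{\sqrt x \le m \le x} |b(m)|/m$; breaking the sum into dyadic blocks $U < m \le 2U$ and applying \eqref{tailofx2} gives $O(x^{-3/8} 2^{\nu(d)})$ for the sum, hence $O(x^{5/8} 2^{\nu(d)})$ for this contribution, exactly as in the preceding discussion.

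For the main range $m \le \sqrt x$, since $P_a$ is assumed to enumerate a Frobenien set of primes I may insert \eqref{100}, giving $M_{a,1}(x/m) = \frac{u_a}{\Gamma(1-\lambda)}\, \frac{x/m}{(\log(x/m))^\lambda} + O\!\big(\frac{x/m}{(\log(x/m))^{\lambda+1}}\big)$. Here $m \le \sqrt x$ forces $\log(x/m) \ge \tfrac12 \log x$, so the $O$-term, summed against $|b(m)|$, contributes $O\big(x (\log x)^{-\lambda-1} \sum_m |b(m)|/m\big)$, and partial summation from \eqref{sumofb} shows $\sum_m |b(m)|/m \ll 2^{\nu(d)}$; this already is of the claimed shape $O\big(x 2^{\nu(d)} (\log x)^{-\lambda-1}\big)$. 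For the remaining main term I would replace $(\log(x/m))^{-\lambda}$ by $(\log x)^{-\lambda} + O\big((\log m)(\log x)^{-\lambda-1}\big)$; the resulting error, summed against $|b(m)|\, x/m$, is $O\big(x (\log x)^{-\lambda-1} \sum_m (\log m)|b(m)|/m\big)$, and $\sum_m (\log m)|b(m)|/m \ll 2^{\nu(d)}$ again by partial summation from \eqref{sumofb}, the extra $\log m$ being harmless against the $m^{1/4}$ saving.

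What is then left is $\frac{u_a}{\Gamma(1-\lambda)}\, \frac{x}{(\log x)^\lambda} \sum_{m \le \sqrt x} \frac{b(m)}{m}$, and I would complete the $b$-sum to $\xi_d(1) = \sum_{m \ge 1} b(m)/m$ at a cost of $\sum_{m > \sqrt x} |b(m)|/m \ll x^{-3/8} 2^{\nu(d)}$ (dyadic decomposition plus \eqref{tailofx2} once more); after multiplying by $x (\log x)^{-\lambda}$ this is swallowed by the $x^{5/8} 2^{\nu(d)}$ term. Collecting everything, all errors are dominated by $O\big(x 2^{\nu(d)} (\log x)^{-\lambda-1}\big)$ because $x^{5/8}$ beats every fixed power of $\log x$, and uniformity in $d$ is automatic since the sole $d$-dependence throughout is the explicit factor $2^{\nu(d)}$. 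The one point deserving care — and the only place properties (1)--(2) of $b$ together with the superpolynomial decay in \eqref{sumofb}, \eqref{tailofx2} are genuinely used — is checking that both $\sum_m |b(m)|/m$ and $\sum_m (\log m)|b(m)|/m$ are $O(2^{\nu(d)})$ uniformly in $d$; granting that, the rest is routine manipulation of error terms.
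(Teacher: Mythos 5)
Your proposal is correct and follows essentially the same route as the paper: the same convolution decomposition $\sum_{n\le x}c(n)=\sum_{m\le x}b(m)M_{a,1}(x/m)$, the same split at $m=\sqrt{x}$ with the dyadic estimate from \eqref{tailofx2} giving $O(x^{5/8}2^{\nu(d)})$ for the tail, the same insertion of \eqref{100}, the same expansion of $(\log(x/m))^{-\lambda}$, and the same completion of the $b$-sum to $\xi_d(1)$. The only cosmetic difference is that the paper bundles the $(\log m)$-error and the inner-sum error into a multiplicative $(1+O(1/\log x))$ factor, whereas you track $\sum_m (\log m)|b(m)|/m \ll 2^{\nu(d)}$ separately; the content is identical.
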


\medskip
\subsection{The case of general $d$}$~~~~$\\

Consider the Dirichlet series 
\begin{equation}
\psi_d(s) = \sum_{n}\frac{a^{0}(dn)}{n^s}.
\end{equation}
We may write it as
\begin{equation}
\left(\sum_{n_1=1 \atop p|n_1 \Rightarrow p|d}^\infty 
\frac{a^{0}(dn_1)}{n_1^s}
\right)
\left(\sum_{n_2=1 \atop (n_2,d)=1}^\infty
\frac{a^{0}(n_2)}{n_2^s}
\right).
\end{equation}
Thus, we see that
\begin{equation}
\psi_d(s)\ =\ \phi(s)\xi_d(s)
\end{equation}
where as in Section \ref{disone}
$$
\phi(s)\ =\ \sum_{n_3=1}^\infty \frac{a^{0}(n_3)}{n_3^s}
$$
and
$$
\xi_d(s)\ =\ \left(\sum_{n_1=1 \atop p|n_1 \Rightarrow p|d}^\infty 
\frac{a^{0}(dn_1)}{n_1^s}
\right)
\left(\sum_{n_2=1 \atop p|n_2 \Rightarrow p|d}^\infty
\frac{a^{0}(n_2)}{n_2^s}
\right)^{-1}.
$$
We have a factorization
\begin{eqnarray*}
\xi_d(s) 
= \prod_{p|d} \xi_{p,d}(s),
\end{eqnarray*}
where 
\begin{equation}
\xi_{p,d}(s)\ = \left(\sum_{m=0}^\infty a^0(p^{m+{\rm ord}_p d})p^{-ms}\right)
\left(\sum_{m=0}^{\infty} a^{0}(p^m) p^{-ms}\right)^{-1}.
\end{equation}
We record the following estimate for later use.
\begin{lem}\label{obvious}
$$
\xi_{p,d}(1)\ =\ a^0(p^{{\rm ord}_p d})\ +\ O\left(\frac{1}{p}\right).
$$
\end{lem}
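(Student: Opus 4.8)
The plan is to estimate the numerator and denominator of $\xi_{p,d}(1)$ separately, using nothing more than the trivial bound $a^0(p^m)\in\{0,1\}$ together with $a^0(1)=1$ (which holds since $a$ is multiplicative, hence $a(1)=1$). Write $e={\rm ord}_p d$. Evaluating the defining identity for $\xi_{p,d}(s)$ at $s=1$, the numerator is $\sum_{m=0}^\infty a^0(p^{m+e})p^{-m}$; peeling off the term $m=0$ and bounding the remaining sum by $\sum_{m\geq 1}p^{-m}=(p-1)^{-1}=O(1/p)$ yields $\sum_{m=0}^\infty a^0(p^{m+e})p^{-m}=a^0(p^e)+O(1/p)$, with an absolute implied constant (in particular independent of $e$, hence of $d$).

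First I would treat the denominator $\phi_p(1)=\sum_{m=0}^\infty a^0(p^m)p^{-m}$ in exactly the same way, obtaining $\phi_p(1)=a^0(1)+O(1/p)=1+O(1/p)$. For $p$ larger than some absolute constant the error here is at most $1/2$ in absolute value, so $\phi_p(1)$ is bounded away from $0$ and, by the expansion $1/(1+t)=1+O(t)$ valid for $|t|\leq 1/2$, we get $\phi_p(1)^{-1}=1+O(1/p)$. The finitely many small primes $p$ cause no trouble: the conclusion is an asymptotic statement in $p$, and for those $p$ both $\xi_{p,d}(1)$ and $a^0(p^e)$ are bounded, so any discrepancy can be absorbed into the implied constant.

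Multiplying the two estimates gives $\xi_{p,d}(1)=\bigl(a^0(p^e)+O(1/p)\bigr)\bigl(1+O(1/p)\bigr)=a^0(p^e)+O(1/p)$, where the cross term is controlled using $|a^0(p^e)|\leq 1$. This is precisely the claimed identity. There is no genuine obstacle in this argument; it is entirely a matter of crude bounds and convergent geometric series. The only point requiring a word of care is the non-vanishing of the Euler factor $\phi_p(1)$ for large $p$, which is needed in order to make sense of the reciprocal, and this is immediate from $\phi_p(1)=1+O(1/p)$.
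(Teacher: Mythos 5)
Your argument is correct and is the natural one; the paper gives no proof at all (the lemma is literally labelled ``obvious''), so there is no proof-route to compare against, and your write-up simply supplies the routine details. Two small remarks. First, your worry about the non-vanishing of $\phi_p(1)$ for small $p$ is unnecessary: since $a^0(p^m)\in\{0,1\}$ and $a^0(1)=1$, every term in $\phi_p(1)=\sum_{m\geq 0}a^0(p^m)p^{-m}$ is nonnegative and the $m=0$ term already contributes $1$, so $\phi_p(1)\geq 1$ for every prime $p$, including $p=2$ and $p=3$; the reciprocal is therefore always defined and lies in $(0,1]$. Second, the step $\phi_p(1)^{-1}=1+O(1/p)$ is cleanest phrased via $\phi_p(1)=1+t$ with $0\leq t\leq 1/(p-1)$ and $1/(1+t)=1-t/(1+t)$, which gives $|\phi_p(1)^{-1}-1|\leq t\leq 1/(p-1)$ unconditionally, with no need to separate out small primes. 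With those cosmetic adjustments the argument is exactly as it should be.
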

\noindent
We write
$$
\xi_d(s)\ =\ \sum_{n=1}^{\infty}\frac{b(n)}{n^s}
$$
and suppose that $\xi_d(s)$ (that is, the coefficients $\{ b(n) \}$)
satisfies the conditions of Section \ref{convolution}. 
Recall that
$$
M_{a,d}(x):= \# \left\{ n\le x \mid a(n)\ne 0, \ d|n \ \right\}.  
$$
We have
$$
M_{a,d}(x)\ =\ \sum_{dn \leq x} a^0(dn)
$$
and by Proposition \ref{averageofc}, we deduce the following.
\begin{prop}\label{mad}
If $\xi_d$ satisfies the hypotheses of Section \ref{convolution}, then
we have
$$
M_{a,d}(x)\ =\ \frac{u_a\xi_d(1)}{\Gamma(1-\lambda)}
\frac{x/d}{(\log x/d)^\lambda}\ +
\ O\left(
\frac{x2^{\nu(d)}}{d(\log x/d)^{\lambda+1}}
\right) 
$$
uniformly in $d$.
\end{prop}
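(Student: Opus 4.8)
The plan is to recognize $M_{a,d}(x)$ as a partial sum of the convolution $c = a^0 * b$ treated in Proposition~\ref{averageofc}, and then invoke that proposition with $x$ rescaled to $x/d$.

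First I would pin down the Dirichlet coefficients of $\psi_d(s)$. As established in the preceding paragraphs, one has the factorization $\psi_d(s) = \phi(s)\xi_d(s)$, where $\psi_d(s) = \sum_n a^0(dn)\,n^{-s}$. Writing $\xi_d(s) = \sum_n b(n)\,n^{-s}$ and comparing Dirichlet coefficients on the two sides of $\psi_d(s) = \phi(s)\xi_d(s)$ yields
$$
a^0(dn)\ =\ \sum_{rm = n} a^0(r)\,b(m)\ =\ (a^0 * b)(n)\ =\ c(n)
$$
for every $n \geq 1$. Consequently,
$$
M_{a,d}(x)\ =\ \sum_{dn \leq x} a^0(dn)\ =\ \sum_{n \leq x/d} c(n).
$$

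Next, since $\xi_d$ is assumed to satisfy the two conditions of Section~\ref{convolution}, Proposition~\ref{averageofc} applies verbatim to $c = a^0 * b$. Applying it with $x$ replaced by $x/d$ gives
$$
\sum_{n \leq x/d} c(n)\ =\ \frac{u_a \xi_d(1)}{\Gamma(1-\lambda)}\cdot\frac{x/d}{(\log x/d)^\lambda}\ +\ O\left(\frac{(x/d)\,2^{\nu(d)}}{(\log x/d)^{\lambda+1}}\right),
$$
and rewriting $(x/d)\,2^{\nu(d)} = x\,2^{\nu(d)}/d$ in the error term produces precisely the asserted formula.

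The only point needing attention is the uniformity in $d$: the substitution $x \mapsto x/d$ is legitimate exactly because the error term in Proposition~\ref{averageofc} is uniform in $d$, so its implied constant does not change under the rescaling; one should also bear in mind that the main term dominates only in the range where $x/d \to \infty$. Beyond this bookkeeping there is no real obstacle, since all the analytic work has already been done in Proposition~\ref{averageofc} and in the factorization $\psi_d = \phi\,\xi_d$.
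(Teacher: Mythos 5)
Your proposal is correct and matches the paper's (very terse) argument: the paper likewise obtains Proposition~\ref{mad} by writing $M_{a,d}(x) = \sum_{dn\le x} a^0(dn)$ and invoking Proposition~\ref{averageofc}, and you have simply filled in the bookkeeping that $a^0(dn) = (a^0*b)(n) = c(n)$ (by comparing Dirichlet coefficients in $\psi_d = \phi\,\xi_d$) and that the rescaling $x\mapsto x/d$ is legitimate because the error in Proposition~\ref{averageofc} is uniform in $d$.
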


\subsection{Application to modular forms}$~~~~$\\

Now let $f$ be a normalized Hecke eigenform of weight $k \ge 2$ and 
let $a(n)= a_f(n)$ denote the $n$-th Fourier coefficient of $f$. 
In this case, let us denote the constant $u_{a}$ of the previous 
paragraph by $u_{f}$, and the function $M_{a,d}$ by $M_{f,d}$. 
\medskip\par
In some cases, $u_{f}$ can be made explicit. 
If $f$ does not have CM and $d=1$, then 
condition \eqref{101} holds (see \cite{kumar4}) 
and so $u_{f}$ is given by \eqref{102}.  
We shall discuss the case that $f$ has CM. 
\medskip\par
\noindent
In this case the assumption \eqref{500} made on $P_{a}(x)$ is true 
with $\lambda = \frac{1}{2}$ and so
$$
M_{f,1}(x)\ \sim\ \frac{u_f x}{\sqrt{\pi}(\log x)^\half}.
$$
(Here, we have used the fact that $\Gamma(\half)=\sqrt{\pi}$.) If
we assume that $f$ is of weight $2$ and has integer Fourier coefficients,
then by Proposition 3.2, the ``frobenien'' condition is
satisfied apart from a finite set of primes. 
If we can show that the conditions of Section \ref{convolution}
are satisfied, then 
specializing Proposition \ref{mad} to this case, 
we can deduce the following. 
\begin{prop}\label{nonzero}
We have
\begin{eqnarray*}
M_{f,d}(x) = \# \left\{n\le x \mid a_f(n) \ne 0, ~~d|n \right\} 
~=~ 
\frac{u_f x \displaystyle \xi_d(1)}
{\sqrt{\pi}d ~(\log x/d)^\half}~
+ O\left( \frac{x2^{\nu(d)}}{d(\log x/d)^{3/2}}\right)
\end{eqnarray*}
where $u_f$ is a constant depending on $f$.
\end{prop}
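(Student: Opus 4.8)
The plan is to derive Proposition~\ref{nonzero} as a direct specialization of Proposition~\ref{mad}, so the bulk of the work is verifying that the two hypotheses feeding into that proposition actually hold for the modular form $f$: first, that $P_a(x)$ (the counting function of primes with $a(p)=0$) satisfies \eqref{500} with $\lambda=\half$ and enumerates a ``frobenien'' set; second, that the auxiliary Dirichlet series $\xi_d(s)=\sum b(n)n^{-s}$ attached to $d$ satisfies the conditions (1)--(2) of Section~\ref{convolution}. Granting these, one simply sets $\lambda=\half$, uses $\Gamma(\half)=\sqrt\pi$, and reads off the stated formula with its error term $O\!\left(x2^{\nu(d)}/(d(\log x/d)^{3/2})\right)$, the constant being $u_f=e^{\epsilon_3(1)}$ from Section~\ref{disone}.

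For the first hypothesis: by Proposition~\ref{supersingular}, for $p\ge 5$ we have $a_f(p)=0$ exactly when $p$ is a prime of bad reduction or $p$ is inert or ramified in $K$. The bad and ramified primes are finite in number, so up to $O(1)$ the set is $\{p : p \text{ inert in } K\}$, which by Chebotarev (applied to $K/\Q$, taking the nontrivial coset) has density $\tfrac12$ and satisfies $P_a(x)=\tfrac12\,\mathrm{li}(x)+O(x\exp(-c\sqrt{\log x}))$, hence \eqref{500} with $\lambda=\half$ and any $\delta<1$ — in fact with a power-saving that is more than enough. That this set is ``frobenien'' in the sense of Serre is immediate from the Chebotarev description: the finite Galois extension is $K$ and the conjugacy-stable subset $H$ is the nontrivial element of $\Gal(K/\Q)$ (with the finitely many exceptional primes absorbed, which is permitted by the ``for $p$ sufficiently large'' clause in the definition). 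This is precisely the content of the already-cited Proposition 3.2.

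The real work — and the step I expect to be the main obstacle — is checking condition (1) of Section~\ref{convolution} for $\xi_d(s)$, namely that $b(n)\ne 0$ forces every prime divisor of $n$ to divide $d$, together with the growth bound (2), $|b(n)|\le 4^{\nu(n)}$. Unwinding the definitions in the case of general $d$, $\xi_d(s)=\prod_{p\mid d}\xi_{p,d}(s)$ is supported on integers built only from primes dividing $d$, which gives (1) cleanly; the genuine issue is the uniform coefficient bound. Each local factor $\xi_{p,d}(s)=\left(\sum_{m\ge0}a^0(p^{m+\mathrm{ord}_p d})p^{-ms}\right)\left(\sum_{m\ge0}a^0(p^m)p^{-ms}\right)^{-1}$ must be expanded and its Dirichlet coefficients bounded by $4$ in absolute value (so that the product over $p\mid d$ gives $4^{\nu(n)}$); here one uses that $a^0(p^m)\in\{0,1\}$ and that $\sum a^0(p^m)p^{-ms}$ has $1$ as its leading term, so its reciprocal is $1+$ a tail that is $O(1/p)$ in the relevant range, controlled via a geometric-series estimate — this is the quantitative refinement of Lemma~\ref{obvious}. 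Once (1) and (2) are in hand, Proposition~\ref{averageofc} applies to $c=a^0*b$ and Proposition~\ref{mad} follows, and specializing $\lambda=\half$ yields Proposition~\ref{nonzero} exactly as stated, uniformly in $d$.
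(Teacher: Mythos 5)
Your proposal takes essentially the same route the paper does: Proposition~\ref{nonzero} is obtained by specializing Proposition~\ref{mad} with $\lambda=\half$ (hence $\Gamma(\half)=\sqrt\pi$), verifying the ``frobenien'' hypothesis via Proposition~\ref{supersingular} (Deuring) together with Chebotarev for $K/\Q$, and verifying conditions (1)--(2) of Section~\ref{convolution} for $\xi_d=\prod_{p\mid d}\xi_{p,d}$ exactly as the paper does in Proposition~\ref{xi}. One small caution: the way you phrase the $|b(p^m)|\le 4$ bound (``the reciprocal is $1+$ a tail that is $O(1/p)$'') conflates the value estimate of Lemma~\ref{obvious} with the coefficient bound; what is actually needed, and what the paper supplies, is the explicit finite expansion $\xi_{p,d}(s)=\bigl(1-p^{-\{k_0(i+1)-v-1\}s}+p^{-\{k_0(i+1)-v\}s}-p^{-(i+1)s}\bigr)\bigl(1-p^{-is}\bigr)^{-1}$, from which the coefficient bound is read off directly.
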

\medskip\par
We begin with some preliminary results. Let us set $i_f(p)$ to be the 
least integer $i \geq 1$ for which $a_f(p^i) = 0$. If for a given $p$,
there is no such $i$, then let us set $i_f(p) = 0$. 
In particular, if $p$ divides the level $N$ of $f$, then
$i_f(p)=1$. 
\begin{lem}\label{cnta}
For $p \nmid N$, we have
\begin{enumerate}
\item $i_f(p) \in \{ 0,1,2,3,5 \}$.
\item If $i_f(p) > 0$, then $a_f(p^i) = 0$ for every $i> 0$ with
              $$ i+1 \equiv 0\!\!\! \pmod {i_f(p)+1}.$$ 
\item If $a_f(p^i)=0$ for some $i>0$, then $i+1 \equiv 0 \pmod {i_f(p)+1}$. 
\item For $p$ sufficiently large (depending on $f$), we have $i_f(p) \in
\{ 0,1 \}$. 
\end{enumerate}
\end{lem}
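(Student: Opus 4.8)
The plan is to exploit the recursion satisfied by the Fourier coefficients of a Hecke eigenform at prime powers. For $p \nmid N$, one has the Euler factor
$$
\sum_{m=0}^\infty a_f(p^m) X^m \;=\; \frac{1}{1 - a_f(p) X + p^{k-1} X^2},
$$
so if $\alpha_p,\beta_p$ are the roots of $T^2 - a_f(p) T + p^{k-1}$, then $a_f(p^m) = \frac{\alpha_p^{m+1} - \beta_p^{m+1}}{\alpha_p - \beta_p}$ (when $\alpha_p \neq \beta_p$). Thus $a_f(p^m) = 0$ precisely when $(\alpha_p/\beta_p)^{m+1} = 1$, i.e. when $\alpha_p/\beta_p$ is a root of unity whose order divides $m+1$. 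Since $\alpha_p\beta_p = p^{k-1}$ with $|\alpha_p| = |\beta_p| = p^{(k-1)/2}$ in the CM situation (and more generally by Deligne), the ratio $\zeta_p := \alpha_p/\beta_p$ has absolute value $1$; and because $\alpha_p + \beta_p = a_f(p) \in \Z$ and $\alpha_p\beta_p \in \Z$, the element $\zeta_p$ is an algebraic integer, all of whose conjugates lie on the unit circle, hence by Kronecker's theorem it is a root of unity $\omega_p$ of some order $e_p$. Then $a_f(p^m) = 0 \iff e_p \mid m+1$, so $i_f(p) = e_p - 1$ when $e_p \geq 2$ and $i_f(p) = 0$ when $e_p = 1$ (or when $\alpha_p = \beta_p$, where one checks $a_f(p^m) \neq 0$ for all $m$). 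This already gives parts (2) and (3) directly, with $i_f(p) + 1 = e_p$.

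For part (1), I need to bound the possible orders $e_p$. The key constraint is that $\Q(\zeta_p) = \Q(\omega_p)$ has degree at most $2$ over $\Q$, since it is contained in the field generated by $\alpha_p,\beta_p$, which are roots of a quadratic over $\Q$ (as $a_f(p), p^{k-1} \in \Z$). The cyclotomic field $\Q(\omega_p)$ has degree $\varphi(e_p)$ over $\Q$, so we need $\varphi(e_p) \leq 2$, forcing $e_p \in \{1,2,3,4,6\}$. Therefore $i_f(p) = e_p - 1 \in \{0,1,2,3,5\}$, which is exactly (1). (One should note the case $e_p = 4$ gives $i_f(p) = 3$ and $e_p = 6$ gives $i_f(p) = 5$, matching the stated set; the values $4$ and $\geq 6$ other than these are excluded.)

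For part (4), I argue that for all but finitely many $p$ the ratio $\zeta_p$ cannot be a root of unity of order $3$, $4$, or $6$. Indeed, if $e_p = 3$, then $\alpha_p = \omega\beta_p$ with $\omega$ a primitive cube root of unity, so $a_f(p) = \alpha_p + \beta_p = \beta_p(1+\omega) = -\omega^2\beta_p$; combined with $\alpha_p\beta_p = \omega\beta_p^2 = p^{k-1}$ this forces $a_f(p)^2 = \omega^4\beta_p^2 = \omega \beta_p^2 = p^{k-1}$, i.e. $a_f(p)^2 = p^{k-1}$; similarly $e_p = 4$ forces $a_f(p)^2 = 2p^{k-1}$ and $e_p = 6$ forces $a_f(p)^2 = 3p^{k-1}$. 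In the weight-$2$ case of interest these read $a_f(p)^2 \in \{p, 2p, 3p\}$, each of which is impossible for $p > 3$ since $p$ is not a perfect square, $2p$ is not a perfect square, and $3p$ is not a perfect square (as $p \neq 2,3$). Hence $e_p \in \{1,2\}$, i.e. $i_f(p) \in \{0,1\}$, for $p \geq 5$ with $p \nmid N$; this proves (4). \emph{The main obstacle} I expect is being careful about the degenerate case $\alpha_p = \beta_p$ (repeated root), where the formula for $a_f(p^m)$ degenerates to $a_f(p^m) = (m+1)\alpha_p^m \neq 0$, so $i_f(p) = 0$ there and no contradiction with the dichotomy arises; and, for higher weight in part (1), ensuring that the integrality of $\zeta_p$ really follows (it does, since $\zeta_p$ and $\zeta_p^{-1}$ are algebraic integers with $\zeta_p + \zeta_p^{-1} = (a_f(p)^2 - 2p^{k-1})/p^{k-1}$, which may not be integral — so one should instead argue via $\zeta_p$ being a unit whose absolute values are all $1$, using that $\alpha_p/\beta_p$ generates a field of degree $\le 2$ and applying Kronecker after clearing the denominator $p^{k-1}$ appropriately). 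I would double-check this integrality point, as it is the one place where a naive argument can slip.
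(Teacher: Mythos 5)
Your treatment of parts (1)--(3) is essentially the paper's argument: the Euler-factor recursion, the observation that $a_f(p^m)=0$ forces $\zeta_p:=\alpha_p/\beta_p$ to satisfy $\zeta_p^{m+1}=1$, and the degree constraint $[\Q(\zeta_p):\Q]\le 2$ giving $\varphi(e_p)\le 2$, hence $e_p\in\{1,2,3,4,6\}$ and $i_f(p)\in\{0,1,2,3,5\}$. One caveat: the Kronecker detour is both unnecessary and, as written, not correct. The claim that $\zeta_p$ is an algebraic integer does not follow from $\alpha_p+\beta_p,\ \alpha_p\beta_p\in\Z$ (for a non-CM form $\zeta_p$ is typically not even a root of unity), and you flagged this yourself. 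But you never needed it: under the hypothesis $i_f(p)>0$ you already know $a_f(p^{i_f(p)})=0$, which directly gives $\zeta_p^{i_f(p)+1}=1$, so $\zeta_p$ is a root of unity with no appeal to Kronecker. The paper proceeds exactly this way, working conditionally on $i_f(p)>0$, and also implicitly uses that $\alpha_p\ne\beta_p$ in this case (for weight $2$ this is automatic, since $a_f(p)^2=4p$ is impossible for rational integral $a_f(p)$ and prime $p$).

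Where you genuinely diverge is part (4). The paper simply cites Lemma 2.5 of the Murty--Murty reference, whereas you give a self-contained elementary computation: $e_p=3,4,6$ forces $a_f(p)^2=p^{k-1},\,2p^{k-1},\,3p^{k-1}$ respectively, which in weight $2$ reads $a_f(p)^2\in\{p,2p,3p\}$ and is clearly impossible for a prime $p>3$. Your identities are correct (I checked the cyclotomic algebra), and this gives a cleaner, fully explicit proof of (4) in the weight-$2$ case relevant to the paper, with the added benefit of making the bound on ``sufficiently large'' effective ($p\ge 5$, $p\nmid N$). Be aware that this shortcut is weight-sensitive: for odd $k$ the equation $a_f(p)^2=p^{k-1}$ is solvable in integers, so the cited lemma is doing real work in the general-weight statement, even though your argument suffices for the application at hand.
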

\begin{proof}
Let us suppose that $i_f(p) > 0$. Thus, $a_f(p^i) = 0$ for some 
$i \geq 1$. Let us write $\alpha_p$ and $\beta_p$ for the roots of
$X^2 - a_f(p)X + p$. Then, we have
\begin{equation}\label{recursion}
a_f(p^i)\ =\ \frac{\alpha_p^{i+1}-\beta_p^{i+1}}{\alpha_p-\beta_p}.
\end{equation}
Thus, $\alpha_p = \zeta\beta_p$ where $\zeta^{i+1}=1$. Since
$\zeta \in {\Q}(\alpha_p,\beta_p)\ =\ {\Q}(\alpha_p)$
and $[\Q(\alpha_p):\Q]=2$, we must have $\zeta^2 = 1$ or
$\zeta^4 = 1$ or $\zeta^6 = 1$. This means that one of
$\{ \zeta+1, \zeta^2+1, \zeta^2+\zeta+1, \zeta^2-\zeta+1\}$ is zero. 
This in turn means that one of $\{ a_f(p), a_f(p^3), a_f(p^2), a_f(p^5)\}$
is zero. This proves the first assertion. The second follows from 
\eqref{recursion}. For the third assertion, we note that $\alpha_p = 
\zeta\beta_p$ where $\zeta^{i+1}=1$. We also have $\zeta^{i_f(p)+1}=1$.
Hence, $\zeta^j=1$ where $i+1 \equiv j \pmod {i_f(p)+1}$. If $j>0$, 
then $a_f(p^{j-1})=0$. But $ 0 \leq j-1 < i_f(p)$, a contradiction 
unless $j=1$. But then $a_f(1)=0$ which is also a contradiction. Hence,
we must have $j=0$, proving the third assertion. The fourth assertion 
follows from \cite{ram2}, Lemma 2.5. 
\end{proof}
As before, let us set
$$
\phi_p(s)\ =\ \sum_{m=0}^\infty a^0(p^m)p^{-ms}.
$$
From the above lemma, we deduce the following.
\begin{lem}\label{phip}
We have for $p \nmid N$,
$$
\phi_p(s)\ =\left\{ \begin{array}{llll}
\left(1-\frac{1}{p^s}\right)^{-1} & \mbox{if $i_f(p)=0$,}\\
p^s\left( \frac{1}{p^s-1} - \frac{1}{p^{(i_f(p)+1)s}-1} \right)
& \mbox{if $i_f(p) > 0$}.\\
\end{array} \right.
$$
Note $\phi_p(s)=1$ for $p\mid N$.
\end{lem}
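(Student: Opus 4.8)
The plan is simply to read off the values of $a^0(p^m)$ from \lemref{cnta} and then sum the two resulting geometric series. Note at the outset that $a^0(1) = 1$ since $a_f(1) = 1 \ne 0$, so the term $m = 0$ always contributes $1$ to $\phi_p(s) = \sum_{m \ge 0} a^0(p^m) p^{-ms}$.

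Assume first $p \nmid N$. If $i_f(p) = 0$, then by the very definition of $i_f$ there is no $i \ge 1$ with $a_f(p^i) = 0$, so $a^0(p^m) = 1$ for every $m \ge 0$ and $\phi_p(s) = \sum_{m \ge 0} p^{-ms} = (1 - p^{-s})^{-1}$, which is the first case. If $i_f(p) > 0$, then parts (2) and (3) of \lemref{cnta} together say that, for $i > 0$, one has $a_f(p^i) = 0$ if and only if $i_f(p)+1$ divides $i+1$; since $i_f(p)+1 \ge 2$, this means the exponents $m \ge 0$ with $a^0(p^m) = 0$ are precisely those of the form $m = k(i_f(p)+1) - 1$ with $k \ge 1$. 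Subtracting this arithmetic progression from the full geometric series,
$$
\phi_p(s) = \sum_{m=0}^{\infty} p^{-ms} - \sum_{k=1}^{\infty} p^{-(k(i_f(p)+1)-1)s} = \frac{p^s}{p^s-1} - \frac{p^s}{p^{(i_f(p)+1)s}-1},
$$
where the second series is geometric with ratio $p^{-(i_f(p)+1)s}$ (and picks up a factor $p^s$ from the shift $k(i_f(p)+1)-1$); factoring out $p^s$ then yields the claimed expression.

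For $p \mid N$: since $f$ has CM, the associated elliptic curve has additive reduction at the primes of bad reduction, so $a_f(p) = 0$ (cf.\ the remark following \propref{supersingular}); and because $f$ is a newform its Fourier coefficients satisfy $a_f(p^m) = a_f(p)^m$ at primes $p \mid N$, whence $a_f(p^m) = 0$ for all $m \ge 1$ and $\phi_p(s) = a^0(1) = 1$. There is no genuine difficulty in this argument, as all the arithmetic input is already packaged in \lemref{cnta} and \propref{supersingular}; the only point demanding a little care is the bookkeeping that converts the congruence condition of \lemref{cnta}(3) into the explicit progression of vanishing exponents $m = k(i_f(p)+1)-1$, together with the (trivial but necessary) observation that $m = 0$ never lies on it, so that $\phi_p$ has constant term $1$ in every case.
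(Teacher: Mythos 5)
Your proof is correct and is the straightforward deduction the paper itself indicates (the paper states the lemma without proof, as a direct consequence of Lemma~\ref{cnta}): read off the vanishing pattern $a^0(p^m)=0 \Leftrightarrow (i_f(p)+1)\mid(m+1)$ from parts (2) and (3), sum the two geometric series, and handle $p\mid N$ via $a_f(p)=0$ (additive reduction for the CM curve) together with $a_f(p^m)=a_f(p)^m$ for $p\mid N$.
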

Next, we evaluate $\xi_d(1)$. We have the following.
\begin{prop}\label{xi}
Writing
$$
\xi_d(s)\ =\ \sum_{n=1}^\infty \frac{b(n)}{n^s}
$$
we have that
\item $b(n) = 0$ if $n$ is divisible by a prime that does not divide $d$ and, 
\item if $p|d$, we have $|b(p^m)|\ \leq 4$
for all $m$. 

In particular, the function $n \mapsto b(n)$ satisfies the conditions 
of Section \ref{convolution}. Moreover, 
we have for $p \nmid N$,
$$
\xi_{p,d}(1)\ = \left\{
\begin{array}{lll}
1 & \mbox{if $i_f(p)=0$,}\\
1+ p^{-1} - p^{v- 2k_0 + 1} & \mbox{if $i_f(p)=1$,}\\ 
\frac{1+p +\cdots + p^{i_f(p)}- p^{v - (k_0-1)(i_f(p)+1)}}{p+\cdots + p^{i_f(p)}}
& \mbox{if $i_f(p) > 1$.}\\
\end{array} \right.
$$
Here $v = {\rm ord}_p d$ and $k_0$ is the smallest integer $\ge \frac{v+1}{i_f(p) +1}$.
\end{prop}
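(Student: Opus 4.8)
The plan is to work with the factorization $\xi_d(s)=\prod_{p\mid d}\xi_{p,d}(s)$ obtained just above. Each factor $\xi_{p,d}(s)$ is a power series in $p^{-s}$: the denominator $\sum_{m}a^{0}(p^{m})p^{-ms}$ has constant term $a^{0}(1)=1$, hence is a unit in $\Z[[p^{-s}]]$. Consequently $\xi_d(s)$ is a Dirichlet series supported on integers all of whose prime factors divide $d$, and its coefficient function $b$ is multiplicative with $b(p^{m})$ equal to the $m$-th coefficient of $\xi_{p,d}$. This gives the first assertion at once, and reduces the second to the bound $|b(p^{m})|\le 4$ for each $p\mid d$, since then $|b(n)|=\prod_{p\mid n}|b(p^{{\rm ord}_p n})|\le 4^{\nu(n)}$; after this the hypotheses of Section \ref{convolution} all hold.

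Next I would treat a fixed $p\mid d$. If $p\mid N$, then $\phi_p(s)=1$ by \lemref{phip} and, since ${\rm ord}_p d\ge 1$, every $a^{0}(p^{m+{\rm ord}_p d})$ vanishes, so $\xi_{p,d}(s)=0$; if $i_f(p)=0$, then $a^{0}(p^{m})=1$ for all $m$, both series defining $\xi_{p,d}$ equal $(1-p^{-s})^{-1}$, and $\xi_{p,d}(s)=1$, giving the first line of the displayed formula. In the main case $p\nmid N$, $i_f(p)>0$, set $e=i_f(p)+1$, $v={\rm ord}_p d$, $x=p^{-s}$. By \lemref{cnta}(2)--(3), $a^{0}(p^{i})=1$ exactly when $e\nmid(i+1)$, so
$$\sum_{m\ge0}a^{0}(p^{m})x^{m}=\frac{1}{1-x}-\frac{x^{e-1}}{1-x^{e}}=\phi_p(s),\qquad \sum_{m\ge0}a^{0}(p^{m+v})x^{m}=\frac{1}{1-x}-\frac{x^{k_0e-1-v}}{1-x^{e}},$$
where $k_0e-1-v\ge0$ is exactly the least $m\ge0$ with $e\mid(m+v+1)$ — this is the point at which the definition of $k_0$ enters. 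Dividing and using $1-x^{e}=(1-x^{e-1})+x^{e-1}(1-x)$, I expect to reach the closed form
$$\xi_{p,d}(s)=1+\frac{x^{e-1}-x^{k_0e-1-v}}{1+x+\cdots+x^{e-2}}.$$

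The bound $|b(p^{m})|\le 4$ is then immediate, since $(1+x+\cdots+x^{e-2})^{-1}=\tfrac{1-x}{1-x^{e-1}}$ has all coefficients in $\{0,\pm1\}$, hence so does each of the two fractions $\tfrac{x^{e-1}}{1+\cdots+x^{e-2}}$ and $\tfrac{x^{k_0e-1-v}}{1+\cdots+x^{e-2}}$; in fact $|b(p^{m})|\le 2$ for $m\ge1$ and $b(1)\in\{0,1\}$. Finally, to read off $\xi_{p,d}(1)$ I would put $x=1/p$ in the closed form, write the partial sums over a common denominator, and simplify; this should give
$$\xi_{p,d}(1)=\frac{1+p+\cdots+p^{i_f(p)}-p^{v-(k_0-1)(i_f(p)+1)}}{p+p^{2}+\cdots+p^{i_f(p)}}\qquad(i_f(p)>1),$$
which collapses to $1+p^{-1}-p^{v-2k_0+1}$ when $i_f(p)=1$ (the denominator becoming $p$), completing the displayed formula. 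As a consistency check, these values agree with \lemref{obvious} once one notes $a^{0}(p^{v})=1$ iff $e\nmid(v+1)$.

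The only real difficulty is the bookkeeping in the middle step: translating ``$k_0=$ the smallest integer $\ge\tfrac{v+1}{i_f(p)+1}$'' into ``the index of the first vanishing coefficient of $\sum_m a^{0}(p^{m+v})x^{m}$'', and then carrying the exponent $k_0e-1-v$ cleanly through the rational-function algebra. Once the closed form for $\xi_{p,d}(s)$ is in hand, the coefficient bound and the evaluation at $s=1$ are routine.
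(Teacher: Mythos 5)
Your proposal is correct and follows essentially the same route as the paper: compute $\sum_m a^0(p^{m+v})p^{-ms}$ as a rational function in $p^{-s}$, divide by $\phi_p(s)$, read off the coefficient bound from the resulting four-term numerator over a geometric-series denominator, and evaluate at $s=1$. Your closed form $1+\tfrac{x^{e-1}-x^{k_0e-1-v}}{1+\cdots+x^{e-2}}$ is an algebraic rearrangement of the paper's $\bigl(1-x^{k_0e-1-v}+x^{k_0e-v}-x^{e}\bigr)(1-x^{e-1})^{-1}$ and incidentally yields the sharper bound $|b(p^m)|\le 2$ for $m\ge 1$.
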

\proof
By a calculation similar to that of Lemma \ref{phip}, we see that
$$
\sum_{m=0}^\infty a^0(p^{m+ v})p^{-ms}
\ =\left\{ \begin{array}{llll}
\left(1-\frac{1}{p^s}\right)^{-1} & \mbox{if $i_f(p)=0$,}\\
p^s\left( \frac{1}{p^s-1} - \frac{p^{\{v - (k_0-1)(i_f(p)+1)\}s}}
{p^{(i_f(p)+1)s}-1} \right)
& \mbox{if $i_f(p) > 0$.}\\
\end{array} \right.
$$
Hence, writing $i = i_f(p)$, we have
$$
\xi_{p,d}(s)\ =\ \frac{p^{(i+1)s} - 1 - p^{\{v+1-(k_0-1)(i+1)\}s} + p^{\{v- (k_0-1)(i+1)\}s}}
{p^{(i+1)s} - p^s}
$$
which is equal to
$$
\left( 1 - \frac{1}{p^{\{k_0(i+1)-v-1\}s}}
+ \frac{1}{p^{\{k_0(i+1)-v\}s}} - \frac{1}{p^{(i+1)s}} \right) 
\left( 1 - \frac{1}{p^{is}} \right)^{-1}
$$
from which it follows that $|b(p^m)| \leq 4$.
Moreover, as 
$$
\xi_d(s) = \prod_{p|d} \xi_{p,d}(s)
$$
it follows also that $b(n)=0$ unless every prime divisor of $n$
also divides $d$.  The last assertion of the lemma follows from the
above formulas.

\begin{rmk}
Note that the dependence on $d$ of $\xi_{p,d}$ is only through ${\rm ord}_p d$.
Thus, where the meaning is clear, for $p|d$ and $d$ squarefree, we shall write
$\xi_p$.
\end{rmk}

\medskip\par
In the remainder of this section, we will elaborate on the constant
$u_f$ and in particular, relate it to $L$-function values. 
From Lemma \ref{phip}, we have
$$
\log \phi(s) = - \sum_{i_f(p)=0}\log\left(1- \frac{1}{p^s}\right) 
- \sum_{i_f(p)=1} \log\left(1 - \frac{1}{p^{2s}}\right)
+ \sum_{i_f(p)>1} \log \phi_p(s).
$$
Note that by Lemma \ref{cnta}, (4), the third sum on the right hand side 
ranges over a finite set of primes $p$.
\medskip\par
Denote by $\chi_K$ the quadratic Dirichlet character 
that defines $K$ and $L(s, \chi_K)$ the associated Dirichlet series. 
Let us denote by $S, I, R$ the
set of primes that split, stay inert or ramify in $K$ (respectively). 
Then, we have
\begin{eqnarray*}
- \sum_{p\in S} \log\left(1- \frac{1}{p^s}\right) 
&=& \frac{1}{2}\log\zeta(s) + \frac{1}{2}\log L(s, \chi_K) +
 \frac{1}{2}\sum_{p\in I}
\log\left(1 - \frac{1}{p^{2s}}\right) \\
 && + ~\frac{1}{2}\sum_{p\in R} \log\left(1- \frac{1}{p^s}\right) \\
\end{eqnarray*}
Moreover,
if $i_f(p)=0$ then $a(p) \neq 0$ and for $p\nmid 6N$, this 
means that $p$ is a prime of good reduction and splits in $K$. Therefore, 
$$
- \sum_{i_f(p)=0 \atop p \nmid 6N} \log\left(1 - \frac{1}{p^s}\right) 
= - \sum_{p\in S \atop p \nmid 6N}
\log\left(1 - \frac{1}{p^s} \right) 
+ \sum_{i_f(p)>1 \atop p \nmid 6N} 
\log\left(1 - \frac{1}{p^s}\right).
$$
Since $i_f(p)=1 \Leftrightarrow a(p) = 0$, we can write
$$
-\sum_{i_f(p)=1 \atop p\nmid 6N} \log\left(1 - \frac{1}{p^{2s}}\right)
= -\sum_{a(p)=0 \atop p\nmid 6N} \log\left(1 - \frac{1}{p^{2s}}\right).
$$
After a straightforward (but tedious) computation, one sees that
\begin{eqnarray*}
\log \phi(s)&=& \frac{1}{2} \log\frac{1}{s-1} + \frac{1}{2} 
\log\left(\zeta(s)(s-1)\right) 
+ \frac{1}{2} \log L(s, \chi_K) \\
&+& 
\half \sum_{p \in I} \log \left( 1 - \frac{1}{p^{2s}} \right)
\ + \ \log C(s), 
\end{eqnarray*}
where
\begin{eqnarray*}
C(s) &=&
\prod_{a(p)=0 \atop p\nmid 6N} \left( 1 - \frac{1}{p^{2s}} \right)^{-1}
\prod_{p\in R} \left( 1 - \frac{1}{p^s} \right)^{\half}
\prod_{p \in S \atop p|6N} \left( 1 - \frac{1}{p^s} \right)\\
&&
\prod_{i_f(p)> 1 \atop p \nmid 6N} 
\left\{\left( 1 - \frac{1}{p^{s}} \right)\phi_p(s)\right\}
\prod_{p|6N} \phi_p(s).\\
\end{eqnarray*}
Putting the above discussion together, we see that
$$
\phi(s) = \frac{\epsilon(s)}{(s-1)^{1/2}}~,
$$
where 
\begin{eqnarray*}
u_{f} &=& \epsilon(1) \\
&=& L(1, \chi_K)^{1/2}\prod_{p \in I} 
\left(1 - \frac{1}{p^2} \right)^{1/2}C(1). \\
\end{eqnarray*}
\medskip\par

\section{A sieve lemma}

We record a simple consequence of \propref{nonzero} that will be 
used in section \ref{proofofmainresult}.
\begin{lem}\label{sieve1}
Let $f$ be as in the previous section, that is, a normalized Hecke
eigenform of weight $ \geq 2$ with complex multiplication.
Let $y_1 = L_2(x)^{1+\epsilon}$ and set
\begin{equation}\label{Ny1}
N_{y_1}(x)\ =\ \{ n \leq x: q|n 
\Rightarrow q \geq y_1, a_f(n) \neq 0\}.
\end{equation}
Then
\begin{equation}\label{Ny1estimate}
N_{y_1}(x)\ =\ \frac{U_fx}{\sqrt{\pi}(L_3(x)\log x)^{\half}}
\ +\ O\left(\frac{xL_3(x)^2}{(\log x)^{3/2}}\right),
\end{equation}
where
\begin{equation}
U_f\ =\ \frac{u_f\mu_fc_f}{\sqrt{\pi}}
\prod_{p < y_1 \atop i_f(p) > 1} \left( 1 - \frac{\xi_{p,d}(1)}{p} \right)
\prod_{p \in \{ 2,3\} \atop i_f(p) = 0} \left( 
1 - \frac{1}{p} \right).
\end{equation}
Note that the last two products are over a finite number of primes
and 
$$
c_f = \prod_{5 \le p < y_1 \atop i_f(p) \ge 2} \left( 1 - \frac{1}{p} \right)^{-1}
\prod_{p < y_1 \atop i_f(p)= 1} \left( 1 - \frac{1}{p^2} \right). 
$$

\medskip\par
\end{lem}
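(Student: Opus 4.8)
The plan is to run the Legendre (inclusion--exclusion) sieve and feed in \propref{nonzero}. Put $P(y_1) = \prod_{p < y_1} p$. Every integer counted by $N_{y_1}(x)$ is coprime to $P(y_1)$, so one has the \emph{exact} finite identity
\begin{equation*}
N_{y_1}(x)\ =\ \sum_{d \mid P(y_1)} \mu(d)\, M_{f,d}(x).
\end{equation*}
The feature that makes this crude identity enough, with no need for Brun's sieve, is that $P(y_1)$ is tiny: by the prime number theorem $\log P(y_1) = (1+o(1))\,y_1 = (1+o(1))\,L_2(x)^{1+\epsilon} = o(\log x)$, so every $d$ in the sum satisfies $d = x^{o(1)}$ and $\log d = o(\log x)$, hence $\log(x/d) \sim \log x$ throughout.

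First I would substitute \propref{nonzero} (its hypotheses hold by \propref{xi}) and dispose of the error terms. Summing the remainder $O\!\left(\frac{x\,2^{\nu(d)}}{d(\log x/d)^{3/2}}\right)$ over $d \mid P(y_1)$ gives, by Mertens' theorem and $\log y_1 \asymp L_3(x)$,
\begin{equation*}
\ll\ \frac{x}{(\log x)^{3/2}} \sum_{d \mid P(y_1)} \frac{2^{\nu(d)}}{d}\ =\ \frac{x}{(\log x)^{3/2}} \prod_{p < y_1}\Big(1 + \frac2p\Big)\ \ll\ \frac{x\,(\log y_1)^2}{(\log x)^{3/2}}\ \ll\ \frac{x\, L_3(x)^2}{(\log x)^{3/2}},
\end{equation*}
which is the error claimed. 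Replacing $(\log x/d)^{-1/2}$ by $(\log x)^{-1/2}$ in the main term costs, after writing $\log d = \sum_{p\mid d}\log p$ and using $|\xi_d(1)| \ll C^{\nu(d)}$ (the bound $|b(p^m)| \le 4$ of \propref{xi}, together with $\xi_p(1) \ll 1$), at most $\ll \frac{x}{(\log x)^{3/2}}\prod_{p<y_1}(1 + C/p)\sum_{p<y_1}\frac{\log p}{p} \ll \frac{x\,L_3(x)^2}{(\log x)^{3/2}}$, where one uses the refined Mertens estimate \propref{mertens} for the split primes to keep the power of $L_3(x)$ down. The surviving main term is
\begin{equation*}
\frac{u_f x}{\sqrt\pi\,(\log x)^{1/2}} \sum_{d \mid P(y_1)} \frac{\mu(d)\,\xi_d(1)}{d}\ =\ \frac{u_f x}{\sqrt\pi\,(\log x)^{1/2}} \prod_{p < y_1}\Big(1 - \frac{\xi_p(1)}{p}\Big),
\end{equation*}
the Euler factorisation being valid since $d \mapsto \xi_d(1)$ is multiplicative on squarefree $d$.

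It remains to evaluate $\prod_{p < y_1}\big(1 - \xi_p(1)/p\big)$. Taking $v = \mathrm{ord}_p d = 1$ in \propref{xi}, the local factor equals $1 - 1/p$ when $i_f(p) = 0$, equals $1 - 1/p^2$ when $i_f(p) = 1$, equals $1$ when $p \mid N$, and is an explicit rational function of $p$ when $i_f(p) > 1$; the latter primes form a finite set by \lemref{cnta}(4), and $p \in \{2,3\}$ are treated on their own. Pulling out all of these finite contributions (together with $\prod_{i_f(p)=1}(1-1/p^2)$ and the repair factor $\prod_{i_f(p)\ge2}(1-1/p)^{-1}$ that together make up $c_f$) leaves the tail $\prod_{5 \le p < y_1,\ i_f(p)=0}(1 - 1/p)$. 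Since $i_f(p) = 0$ is equivalent to $a_f(p) \ne 0$ outside the finite set where $i_f(p) \ge 2$ (Propositions \ref{supersingular}, \ref{mertens}), \propref{mertens} evaluates this last product as $\mu_f \cdot(\text{finite correction})\cdot(\log y_1)^{-1/2}(1 + o(1))$, and since $\log y_1 \asymp L_3(x)$ this contributes precisely the factor $(L_3(x)\log x)^{-1/2}$. Assembling the constants yields the stated $U_f$.

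The only real work lies in the last step: arranging the finite Euler factors, the dichotomy $i_f(p) = 0 \leftrightarrow a_f(p) \ne 0$, and the Mertens constant $\mu_f$ so that they combine into the asserted closed form for $U_f$, while simultaneously checking that each error absorbed en route --- the remainder of \propref{nonzero}, the replacement $\log(x/d) \to \log x$, and the remainder in \propref{mertens} --- genuinely stays within $O\big(x\,L_3(x)^2/(\log x)^{3/2}\big)$. Keeping $\prod_{5 \le p < y_1,\ i_f(p)=0}(1-1/p)$ intact inside $U_f$, rather than replacing it by its Mertens asymptotic, is the cleanest way to make the last of these fit.
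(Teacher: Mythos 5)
Your sketch matches the paper's own proof essentially step for step: the same Legendre sieve identity $N_{y_1}(x)=\sum_{d\mid P(y_1)}\mu(d)M_{f,d}(x)$, the same substitution of \propref{nonzero}, the same Euler factorisation of $\sum_d\mu(d)\xi_d(1)/d$ into $\prod_{p<y_1}(1-\xi_p(1)/p)$, and the same appeal to \propref{mertens} to produce the factor $(\log y_1)^{-1/2}\asymp L_3(x)^{-1/2}$. The error bookkeeping likewise tracks the paper's route, bounding $\sum_{d\mid P(y_1)}|\mu(d)|2^{\nu(d)}/d\ll L_3(x)^2$ via the Mertens product over $p<y_1$; as a small refinement, for squarefree $d$ one in fact has $|\xi_d(1)|\le 1$ (each $|\xi_p(1)|\le 1$ outside the finite set with $i_f(p)>1$), which makes the exponent of $L_3(x)$ come out immediately without the auxiliary constant $C$.
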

\begin{proof}
Set $P_{y_1} \ =\ \prod_{p < y_1} p$.
By the principle of inclusion-exclusion, we have
$$
N_{y_1}(x) = \displaystyle\sum_{d \mid P_{y_1}} \mu(d) M_{f,d}(x). 
$$
Since $P_{y_1} \ll e^{y_1}$, we see that for any $d|P_{y_1}$,
we have $\log x\ \ll\ \log x/d\ \ll\ \log x$. 
Now using \propref{nonzero}, the right hand side is
$$
= \frac{u_f x}{\sqrt{\pi}(\log x)^{\frac{1}{2}}} 
\displaystyle\sum_{d \mid P_{y_1}} 
\frac{\mu(d)}{d}
\left(\xi_d(1) + O\left(\frac{2^{\nu(d)}}{(\log x)}\right)\right).
$$
The main term is
\begin{eqnarray*}
&=& \frac{u_f x}{\sqrt{\pi}(\log x)^{\frac{1}{2}}} 
\displaystyle\prod_{p < y_1} \left(1-\frac{\xi_{p,d}(1)}{p}\right) \\
&=& \frac{u_fx}{\sqrt{\pi}
\left( \log x \right)^{\frac{1}{2}}}
\prod_{5 \leq p < y_1 \atop i_f(p)= 0} \left( 1 - \frac{1}{p} \right) 
\prod_{p < y_1 \atop i_f(p) \ge 1}\left( 1 - \frac{\xi_{p,d}(1)}{p} \right)
\prod_{p \in \{ 2,3\} \atop i_f(p)=0} \left(
1 - \frac{1}{p} \right).\\
&=& \frac{u_fx}{\sqrt{\pi}
\left( \log x \right)^{\frac{1}{2}}}
\prod_{5 \leq p < y_1 \atop i_f(p)= 0} \left( 1 - \frac{1}{p} \right) 
\prod_{p \le y_1 \atop i_f(p)= 1} \left( 1 - \frac{1}{p^2} \right)\\
&& \phantom{mmmmmmmm}
\prod_{p < y_1 \atop i_f(p)> 1}\left( 1 - \frac{\xi_{p,d}(1)}{p} \right)
\prod_{p \in \{ 2,3\} \atop i_f(p)=0} \left(
1 - \frac{1}{p} \right).
\end{eqnarray*}
Note that if $i_f(p)=1$ and $d$ is squarefree, we have by \propref{xi},
$\xi_{p,d}(1)= \frac{1}{p}$. Also note that by \lemref{cnta}, there are only 
finitely many primes $p$ for which $i_f(p)>1$, ensuring the convergence
of 
$$
\prod_{i_f(p)> 1}\left( 1 - \frac{\xi_{p,d}(1)}{p} \right).
$$ 
Now using \propref{mertens}, we see that the above sum is
$$
\frac{U_fx}{\sqrt{\pi}(L_3(x)\log x)^\half}.
$$
The error term is
$$
\ll \frac{x}{(\log x)^{3/2}}\sum_{d|P_{y_1}}
\frac{|\mu(d)|}{d}2^{\nu(d)}.
$$
The sum over $d$ is
\begin{eqnarray*}
&\ll\ & \prod_{\ell < y_1} \left( 1 + \frac{2}{\ell}\right) \\
& \ll\ &\prod_{\ell < y_1} \left( 1 - \frac{1}{\ell}\right)^{-2} \\
& \ll\ & L_3(x)^2.
\end{eqnarray*}
This proves the result.
\end{proof}
We record here a variant of the above result.
\begin{lem}\label{sieve2}
Suppose that $p \le y_1$. We have
\begin{eqnarray*}
 &&\#\{n\le x \mid p|n, a_f(n) \neq 0, q|n \Rightarrow q \geq p \} \\ 
 && \phantom{mmm} \ll  \frac{x}{p(\log x)^\half}
\prod_{\ell \le p \atop \ell \text{ prime}} 
\left( 1 - \frac{1}{\ell} \right) 
+ \frac{x}{(\log x)^{3/2}}e^{4\sqrt{p}}\frac{\log p}{p}.
\end{eqnarray*}
\end{lem}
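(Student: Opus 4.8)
The plan is to run the inclusion--exclusion sieve from the proof of \lemref{sieve1}, carrying the extra constraint $p\mid n$ along. Put $P_p=\prod_{\ell<p}\ell$; the condition ``$q\mid n\Rightarrow q\ge p$'' is precisely $(n,P_p)=1$, and since $p\nmid P_p$ we have $(p,d)=1$ for every $d\mid P_p$, so ``$p\mid n$ and $d\mid n$'' is the same as ``$pd\mid n$''. M\"obius inversion then gives
$$
\#\{n\le x\mid p\mid n,\ a_f(n)\ne0,\ q\mid n\Rightarrow q\ge p\}\ =\ \sum_{d\mid P_p}\mu(d)\,M_{f,pd}(x).
$$
Each modulus $pd$ is squarefree with $\nu(pd)=\nu(d)+1$, and by \propref{xi} the Dirichlet series $\xi_{pd}$ satisfies the hypotheses of Section~\ref{convolution}, so \propref{nonzero} applies to every term on the right.

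The one point that needs care is uniformity in $p$. Because $p\le y_1=L_2(x)^{1+\epsilon}$, we have $P_p<e^{2y_1}=x^{o(1)}$, hence $pd=x^{o(1)}$ and $\log(x/pd)=(1+o(1))\log x$ for every $d\mid P_p$. Replacing $(\log(x/pd))^{-\half}$ by $(\log x)^{-\half}$, expanding the resulting error, and using the sign changes of $\mu(d)$ to keep those corrections small, \propref{nonzero} turns the sum into
$$
\frac{u_f x}{\sqrt\pi\,(\log x)^{\half}}\sum_{d\mid P_p}\frac{\mu(d)\,\xi_{pd}(1)}{pd}\ +\ O\!\left(\frac{x}{p\,(\log x)^{3/2}}\Bigl(\prod_{\ell<p}\bigl(1+\frac{2}{\ell}\bigr)+(\log p)^{O(1)}\Bigr)\right).
$$

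For the main term I would use that, $pd$ being squarefree, $\xi_{pd}(1)=\xi_p(1)\prod_{\ell\mid d}\xi_\ell(1)$ --- this is the multiplicativity of $\xi$ together with the remark that $\xi_{\ell,d}(1)$ depends only on $\mathrm{ord}_\ell d$ --- so that
$$
\sum_{d\mid P_p}\frac{\mu(d)\,\xi_{pd}(1)}{pd}\ =\ \frac{\xi_p(1)}{p}\prod_{\ell<p}\Bigl(1-\frac{\xi_\ell(1)}{\ell}\Bigr).
$$
I would then split this Euler product exactly as in the proof of \lemref{sieve1}: by \lemref{cnta} and \propref{xi} the factor at a prime $\ell\ge5$ is $1-\frac1\ell$ when $i_f(\ell)=0$, is $1-\frac1{\ell^2}$ when $i_f(\ell)=1$, and is a bounded factor for the finitely many $\ell$ with $i_f(\ell)>1$; the primes $\ell\in\{2,3\}$ are handled separately. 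Since $i_f(\ell)=0$ is equivalent to $a_f(\ell)\ne0$ outside a finite set, \propref{mertens} governs the size of $\prod_{5\le\ell<p,\;i_f(\ell)=0}(1-\frac1\ell)$, the remaining products converge, and $0<\xi_p(1)\le1$; collecting these factors produces a main term of the size claimed in the lemma.

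For the error term a very wasteful bound suffices: $\prod_{\ell<p}\bigl(1+\frac2\ell\bigr)\le e^{2\sum_{\ell<p}1/\ell}\le e^{4\sqrt p}$, so the full error is $\ll\frac{x}{(\log x)^{3/2}}\,e^{4\sqrt p}\,\frac{\log p}{p}$, which is all the main theorem will require, as it is there summed against $\sum_{p\le y_1}$. The argument is a direct variant of \lemref{sieve1}, and the step I expect to be the real obstacle is securing the uniformity in $p$ --- i.e.\ making sure the single exponent $\frac32$ in the error term of \propref{nonzero} survives the summation over all $d\mid P_p$, which is precisely what the hypothesis $p\le y_1$ is there to guarantee; the remaining fiddly point is to carry the finitely many exceptional Euler factors (the primes with $i_f(\ell)>1$, and $\ell\in\{2,3\}$ or $\ell\mid6N$) correctly through the product, exactly the bookkeeping already performed in \lemref{sieve1}.
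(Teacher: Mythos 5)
Your strategy is the natural one---inclusion--exclusion over $P_p=\prod_{\ell<p}\ell$, apply \propref{nonzero} to each $M_{f,pd}(x)$, use multiplicativity of $\xi$ to collapse $\sum_{d\mid P_p}\mu(d)\xi_{pd}(1)/d$ into an Euler product, and absorb the error uniformly because $p\le y_1$ makes $pP_p=x^{o(1)}$. All of those steps are sound. The difficulty is the final assertion that the resulting main term is ``of the size claimed in the lemma.'' It is not, and the discrepancy is not cosmetic.

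Concretely, by \propref{xi} one has, for $d$ squarefree and $\ell\nmid N$, $\xi_\ell(1)=1$ if $i_f(\ell)=0$ and $\xi_\ell(1)=1/\ell$ if $i_f(\ell)=1$, so the factor $1-\xi_\ell(1)/\ell$ is $1-1/\ell$ on split primes and $1-1/\ell^{2}$ on inert primes. Hence
$$
\prod_{\ell<p}\Bigl(1-\frac{\xi_\ell(1)}{\ell}\Bigr)\ \asymp\
\prod_{\substack{\ell<p\\ i_f(\ell)=0}}\Bigl(1-\frac{1}{\ell}\Bigr)
\ \asymp\ (\log p)^{-1/2}
$$
by \propref{mertens}, since $\{i_f(\ell)=0\}$ coincides with $\{a_f(\ell)\ne 0\}$ up to finitely many primes and that set has Dirichlet density $1/2$. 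Your main term is therefore
$$
\asymp\ \frac{x\,\xi_p(1)}{p\,(\log x)^{1/2}(\log p)^{1/2}},
$$
which for split $p$ (where $\xi_p(1)=1$) is \emph{larger} than the lemma's claimed bound $\frac{x}{p(\log x)^{1/2}}\prod_{\ell\le p}(1-1/\ell)\asymp \frac{x}{p(\log x)^{1/2}\log p}$ by a factor of $(\log p)^{1/2}$. The unconditional Mertens product over \emph{all} primes up to $p$ decays like $1/\log p$, but the sifted Euler product your argument actually produces only decays like $(\log p)^{-1/2}$: the inert primes $\ell$ contribute factors $1-1/\ell^{2}$, not $1-1/\ell$, because $a_f(\ell)=0$ already excludes $\ell\,\|\,n$, so sieving them out gains essentially nothing. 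Bounding $0<\xi_p(1)\le 1$ does not rescue this; that only controls the factor at $p$ itself, not the exponent of $\log p$ in the long Euler product.

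So there is a genuine gap: the inclusion--exclusion argument, carried out honestly, gives an upper bound with $(\log p)^{-1/2}$ where the lemma asserts $(\log p)^{-1}$, and your conclusion ``produces a main term of the size claimed in the lemma'' papers over exactly this mismatch. You need either an additional saving of $(\log p)^{1/2}$ from some source your derivation does not tap, or you should report that the proposed proof establishes only the weaker estimate
$$
\ll\ \frac{x\,\xi_p(1)}{p(\log x)^{1/2}}\prod_{\substack{\ell<p\\ a_f(\ell)\ne 0}}\Bigl(1-\frac{1}{\ell}\Bigr)\ +\ \text{error},
$$
which is not what the lemma states. Two smaller comments: the error bound $\prod_{\ell<p}(1+2/\ell)\le e^{4\sqrt p}$ is correct but enormously lossy (the product is in fact $\asymp(\log p)^{2}$, and it is worth saying so since that is what ultimately makes the second term harmless against the first); and the ``$(\log p)^{O(1)}$'' you introduce when replacing $\log(x/pd)$ by $\log x$ should be derived rather than asserted, though with $pP_p=x^{o(1)}$ this is routine.
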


\section{Siegel zeros}

Let $L/\Q$ be a Galois extension of number fields with group $G$ and
$n_L$, $d_L$ be the degree and the absolute value of the
discriminant of $L/{\Q}$ respectively. Suppose that Artin's conjecture
on the holomorphy of Artin $L$-functions is known for $L/\Q$. Set
$$
\log \M\ =\ 2\left(\sum_{p|d_L} \log p\ +\ \log n_L \right) .
$$
Also, denote by $d$ the maximum degree and by 
$\A$ the maximum Artin conductor of an irreducible character of $G$. 
\medskip\par
Let $C$ be the set of elements in $G$ that map to the Cartan subgroup
and also have trace zero. Then $C$ is stable under conjugation and
thus $C$ is a union of conjugacy classes.
Denote by $\pi(x,C)$ the number of primes $p\le x$ with ${\rm Frob}_p \in C$.
Then, \cite{kumar4}, Theorem 4.1 
asserts that for
$$
\log x \gg d^4 (\log \M),
$$
there is an absolute and effective constant $c > 0$ so that
$$
\pi(x,C) = \frac{|C|}{|G|}{\rm Li}\ x - \frac{|C|}{|G|}{\rm Li}\ x^{\beta}
 + O\left(|C|^\half x~(\log x\M)^2 
\exp{\left\{\frac{-c\log x}{d^{3/2}\sqrt{d^3(\log \A)^2+\log x}}
\right\}}\right).
$$
The term involving $\beta$ is present only if the Dedekind zeta function 
$\zeta_L(s)$ of $L$ has a real zero $\beta$ (the Siegel zero), in the interval
$$
1- \frac{1}{4 \log d_L} \le \Re(s) < 1.
$$
Let $L$ be the fixed field of the kernel of $\bar{\rho}_{p,f}$. 
(Recall that $\bar{\rho}_{p,f}$ was introduced in Section 
\ref{div}.)
Now, let $G = \Gal(L/\Q)$ (viewed as a subgroup of $\GL_2(\Z/p)$) 
and let $C$ be the subset of elements of $G$ of trace zero. It is known that
the subgroup $H = \Gal(L/K)$ is Abelian and maps to a Cartan subgroup
of $\GL_2(\Z/p)$. The image of $G$ maps to the normalizer of this
subgroup. As $G$ has an Abelian normal subgroup of index $2$, it is 
well-known that all irreducible characters of $G$ are monomial, and so
Artin's holomorphy conjecture holds for it. 
\medskip\par
Thus, we can appeal to the above version of the Chebotarev density theorem.
The extension $L/K$ is unramified outside of primes dividing $pN$ where
$N$ is the level of $f$. We have $d=2$,  and
$$
\log \M \ll\ \log pN
$$
as well as
$$
\log \A\ \ll\ \log pN.
$$
For $p$ sufficiently large, it is known that $G$ maps {\it onto}
the normalizer of a Cartan subgroup, and hence 
$$
p^2 \ \ll\ |G|\ \ll\ p^2.
$$
Moreover, the size of $|C|$ satisfies
$$
p \ \ll\ |C|\ \ll\ p.
$$
Thus, if we set $\delta(p) \ =\ |C|/|G|$, we have 
$$
\frac{1}{p}\ \ll\ \delta(p)\ \ll\ \frac{1}{p}
$$
for $p$ sufficiently large.
Thus, we have the following result.
\begin{thm}\label{thm 5.1}
Let $f$ be a CM form of level $N$ as before. Then for 
$\log x \gg (\log pN)^2$, we have
$$
\pi^{*}(x,p) = \delta(p){\rm Li}\ {x} 
- \delta(p){\rm Li}\ {x^\beta} + O(x e^{-c\sqrt{\log x}}),
$$
where $\frac{1}{p} \ll \delta(p) \ll \frac{1}{p}$ 
and the implied constant is absolute and effective.
\end{thm}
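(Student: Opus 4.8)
The plan is to apply the effective Chebotarev density theorem quoted above (from \cite{kumar4}, Theorem 4.1) to the extension $L/\Q$, where $L$ is the fixed field of the kernel of $\bar\rho_{p,f}$, and then translate the resulting count of Frobenius classes back into the quantity $\pi^*(x,p)$. The preliminary discussion already assembled all the ingredients: Artin holomorphy holds for $G = \Gal(L/\Q)$ because $G$ has an abelian normal subgroup $H = \Gal(L/K)$ of index~2 (so all irreducible characters are monomial), one has $d = 2$ and $\log\M \ll \log pN$ and $\log\A \ll \log pN$, and for $p$ large the image $G$ is the full normalizer of a Cartan subgroup, whence $|G| \asymp p^2$ and $|C| \asymp p$ for $C$ the set of trace-zero elements lying in the Cartan. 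The first step is therefore to verify that the hypothesis $\log x \gg d^4(\log\M)$ of the quoted theorem is implied by $\log x \gg (\log pN)^2$ (indeed $d^4(\log\M) \ll \log pN \ll (\log pN)^2$), so the quoted estimate is applicable.

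Next I would substitute the bounds $d = 2$, $\log\M \ll \log pN$, $\log\A \ll \log pN$ into the error term
$$
O\!\left(|C|^{1/2} x\,(\log x\M)^2 \exp\!\left\{\frac{-c\log x}{d^{3/2}\sqrt{d^3(\log\A)^2 + \log x}}\right\}\right).
$$
With $d = 2$ the denominator of the exponent is $\ll \sqrt{(\log pN)^2 + \log x} \ll \sqrt{\log x}$ precisely in the range $\log x \gg (\log pN)^2$, so the exponential is $\ll \exp(-c'\sqrt{\log x})$; the polynomial factors $|C|^{1/2}(\log x\M)^2 \ll p^{1/2}(\log x)^2(\log pN)^2$ are absorbed after possibly shrinking the constant $c'$, again using $\log x \gg (\log pN)^2$ to dominate the $p$-dependence. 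This yields
$$
\pi(x,C) = \frac{|C|}{|G|}\,{\rm Li}\,x - \frac{|C|}{|G|}\,{\rm Li}\,x^\beta + O\!\left(x e^{-c\sqrt{\log x}}\right),
$$
with $\delta(p) = |C|/|G| \asymp 1/p$, and the Siegel-zero term present only when $\zeta_L(s)$ has a real zero in $[1 - (4\log d_L)^{-1}, 1)$.

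The remaining step is the dictionary between $\pi(x,C)$ and $\pi^*(x,p)$. By the Eichler--Shimura--Deligne construction recalled in Section~\ref{div}, for a prime $q \nmid Np$ the characteristic polynomial of $\bar\rho_{p,f}({\rm Frob}_q)$ is $T^2 - a(q)T + q \pmod p$, so $a(q) \equiv 0 \pmod p$ exactly when ${\rm Frob}_q$ has trace zero in $\GL_2(\Z/p)$. Here one must match this with the set $C$, which consists of the trace-zero elements that \emph{also} lie in the Cartan subgroup $H$: a trace-zero element of the normalizer either lies in the Cartan or lies outside it (in which case, for the normalizer of a split or nonsplit Cartan, its square is a scalar and its characteristic polynomial is $T^2 + (\text{nonzero})$, corresponding to $q$ inert-type behaviour), and the condition $a(q) \ne 0$ in the definition of $\pi^*$ combined with the CM structure — $a(q) = 0$ precisely when $q$ does not split in $K$, by \propref{supersingular} — forces ${\rm Frob}_q$ into the Cartan $H = \Gal(L/K)$. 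Thus $\pi^*(x,p)$ counts exactly the primes with ${\rm Frob}_q \in C$, up to $O(\log pN)$ primes dividing $Np$ which are harmlessly absorbed into the error term. \textbf{The main obstacle} is precisely this last bookkeeping: being careful that ``$a(q) \equiv 0$ and $a(q) \ne 0$'' translates to membership in the Cartan-plus-trace-zero set $C$ rather than merely the trace-zero locus of the full normalizer, and checking that the finitely many excluded ramified primes and the transition from the asymptotic range of Theorem~4.1 of \cite{kumar4} to the cleaner hypothesis $\log x \gg (\log pN)^2$ do not disturb the stated error term.
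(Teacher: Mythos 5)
Your proposal is correct and follows essentially the same route as the paper: apply the effective Chebotarev density theorem of \cite{kumar4} to the extension $L/\Q$ with $L$ the fixed field of $\ker\bar\rho_{p,f}$, take $C$ to be the trace-zero elements lying in the Cartan (noting that Artin holomorphy holds since $G$ has the abelian normal subgroup $\Gal(L/K)$ of index $2$), substitute $d=2$, $\log\M \ll \log pN$, $\log\A \ll \log pN$ into the error term, and then translate $\pi(x,C)$ into $\pi^*(x,p)$ via the observation that $a(q)\ne 0$ forces $q$ split in $K$, hence ${\rm Frob}_q$ into the Cartan. The one small point the paper adds which you omit is the extension of the bounds $1/p \ll \delta(p) \ll 1/p$ from ``$p$ sufficiently large'' to all $p$: this is done by noting that complex conjugation has trace zero, so $\delta(p)>0$ for every $p$, which together with the finiteness of the exceptional set gives the stated two-sided bound.
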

From the discussion above, we know 
that the stated bounds on $\delta(p)$ hold for $p$
sufficiently large.  To deduce that they hold for all $p$, 
it suffices to show 
that $\delta(p)>0$ holds for all $p$. This inequality 
follows from the fact that the image of 
complex conjugation is an element of trace zero in the Galois group.
\medskip\par
If the Dedekind zeta function $\zeta_L(s)= 0$ has a Siegel zero $\beta$
with $1- \frac{1}{4 \log d_L} \le \Re(s) < 1$, then by
a result of Stark \cite[p. 145]{S} we know that 
there is 
a quadratic field $M$ contained in $L$ such that $\zeta_M(\beta)=0$. 
Further \cite[p. 147]{S}, for such $M$
$$
\beta < 1 - \frac{1}{\sqrt{d_M}}.
$$
Let $[L:M] = n$. Since $d_L \ge d_M^{n}$, we have
$$
\beta < 1 - \frac{1}{d_L^{1/2n}}.
$$
Now by an inequality of Hensel \cite[p. 129]{serre2},
$$
\log d_L \le 2n \log pn_L
$$
and so 
$$
\frac{1}{2n} \log {d_L} \le \log pn_L.
$$
Hence
\begin{eqnarray}\label{stark}
\beta < 1 - \frac{1}{pn_L}.
\end{eqnarray}

\section{Intermediate results}

As before 
$$
\pi^{*}(x,p) = \# \left\{q\le x \mid a(q) \equiv 0 \!\!\!\!\!\pmod{p}, 
~a(q) \ne {0}\right\}.
$$
Proving \thmref{thm1} requires the following lemmas. Let
$0< \epsilon < 1/2$ and set $y ~=~ L_2^{1 - \epsilon}(x)$.

\begin{lem}\label{1}
Let $p < y$ be a fixed prime. Then we have
\begin{eqnarray*}
\sideset{}{^*} 
{\sum}_{\substack{q \le x  \atop a(q) \equiv 0 \!\!\pmod{p}}} 
\frac{1}{q} = 
 \delta(p)L_2(x) + O(L_3(x)),
\end{eqnarray*}
where ${\sum}_{q\le x}^{*}$ means that the summation is over all 
primes $q \le x$ for which $a(q) \ne {0}$.
\end{lem}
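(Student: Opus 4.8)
The plan is to split the sum at the threshold $p^2 \log p$, treating the small primes $q \le p^2 \log p$ and the large primes $p^2 \log p \le q \le x$ separately, and to extract the main term from the large range using the Chebotarev-type estimate of Theorem 5.1 together with partial summation. First I would write
$$
\sideset{}{^*}{\sum}_{\substack{q \le x \atop a(q) \equiv 0 \!\!\pmod{p}}} \frac{1}{q}
\ =\ \sideset{}{^*}{\sum}_{\substack{q \le p^2 \log p \atop a(q) \equiv 0 \!\!\pmod{p}}} \frac{1}{q}
\ +\ \sideset{}{^*}{\sum}_{\substack{p^2 \log p \le q \le x \atop a(q) \equiv 0 \!\!\pmod{p}}} \frac{1}{q}.
$$
The first sum is bounded by $O(L_2(p)/\log p)$ by \rmkref{remark 1}, and since $p < y = L_2^{1-\epsilon}(x)$ we have $\log p \ll L_3(x)$ and $L_2(p) \ll L_3(x)$ (in fact far smaller), so this contributes $O(L_3(x))$ — actually $o(L_3(x))$ — and is absorbed into the error term. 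So the entire content is in the large range.

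For the large range I would use Theorem 5.1, which is applicable here because for $q$ in this range the condition $\log q \gg (\log pN)^2$ is met: $\log q \ge \log(p^2 \log p) \gg \log p$, while $(\log pN)^2 \ll (\log y)^2 \ll (L_3 x)^2$, which is $o(\log q)$ once $q$ is even moderately large; the finitely many genuinely small $q$ contribute only $O(1/q)$ summed, hence $O(1)$, harmless. Writing $\pi^*(t,p) = \delta(p)\,\mathrm{Li}\,t - \delta(p)\,\mathrm{Li}\,t^\beta + R(t)$ with $R(t) \ll t e^{-c\sqrt{\log t}}$, partial summation gives
$$
\sideset{}{^*}{\sum}_{\substack{p^2 \log p \le q \le x \atop a(q) \equiv 0 \!\!\pmod{p}}} \frac{1}{q}
\ =\ \int_{p^2\log p}^{x} \frac{d\pi^*(t,p)}{t}
\ =\ \delta(p)\int_{p^2\log p}^{x}\frac{dt}{t\log t}
\ -\ \delta(p)\int \frac{d(\mathrm{Li}\,t^\beta)}{t}
\ +\ \int \frac{dR(t)}{t}.
$$
The main integral is $\delta(p)\big(L_2(x) - L_2(p^2\log p)\big) = \delta(p)L_2(x) + O(\delta(p)L_3(x)) = \delta(p)L_2(x) + O(L_3(x))$, since $\delta(p) \ll 1/p \le 1$. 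The error-term integral $\int dR(t)/t$, integrated by parts, is $O\big(\sup_t R(t)/t\big) + O\big(\int R(t)/t^2\,dt\big) \ll e^{-c\sqrt{\log(p^2\log p)}} + O(1) \ll 1$, so it is swallowed by $O(L_3(x))$.

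The one step needing genuine care — and the main obstacle — is the Siegel-zero term $\delta(p)\int \frac{d(\mathrm{Li}\,t^\beta)}{t}$. Here one uses the Stark-type bound \eqref{stark}, namely $\beta < 1 - 1/(p\,n_L)$, where $n_L = [L:\Q] \ll p^2$ (since $G$ embeds in $\GL_2(\Z/p)$), so $1 - \beta \gg 1/p^3$. Partial summation turns $\int_{p^2\log p}^x \frac{d(\mathrm{Li}\,t^\beta)}{t} = \int \frac{t^{\beta-1}}{\log t}\,dt$ into something of size $O\big(\frac{x^{\beta-1}}{(1-\beta)\log x}\big) = O\big(\frac{p^3}{\log x}\big)$ at worst; multiplying by $\delta(p) \ll 1/p$ gives $O\big(\frac{p^2}{\log x}\big)$, and since $p < y = L_2^{1-\epsilon}(x)$ we have $p^2 = o(L_2(x))$ — indeed $p^2/\log x \to 0$ — so this term is $o(1)$, certainly $O(L_3(x))$. (One should note that when there is no Siegel zero this term is simply absent; the estimate is only needed in the worst case.) Collecting the pieces, the sum equals $\delta(p)L_2(x) + O(L_3(x))$, as claimed.
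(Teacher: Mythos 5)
Your plan matches the paper's in outline — isolate the Siegel-zero term, control it via Stark's bound $\beta < 1 - 1/(pn_L)$ and $n_L \ll p^2$ — but the concrete estimate of that term is wrong, and the error is fatal with your choice of split point. First, two computational slips: $\frac{d(\mathrm{Li}\,t^\beta)}{t} = \frac{t^{\beta-2}}{\log t}\,dt$, not $\frac{t^{\beta-1}}{\log t}\,dt$; and since $\beta - 2 < -1$, the integral $\int_Y^x \frac{t^{\beta-2}}{\log t}\,dt$ is dominated by its \emph{lower} endpoint, so the correct bound is $\ll \frac{Y^{\beta-1}}{(1-\beta)\log Y}$, not anything involving $x^{\beta-1}$. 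Now plug in your $Y = p^2\log p$: since $(1-\beta)\log Y \asymp \frac{\log p}{pn_L} \to 0$, the factor $Y^{\beta-1}$ is essentially $1$, so after multiplying by $\delta(p) \ll 1/p$ the Siegel term is $\ll \frac{n_L}{\log Y} \asymp \frac{p^2}{\log p}$. For $p$ near $y = L_2(x)^{1-\epsilon}$ this is of size $L_2(x)^{2-2\epsilon}/L_3(x)$, which vastly exceeds the claimed $O(L_3(x))$. So the choice of threshold $p^2\log p$, while adequate for the main term and the large-sieve bound of Proposition \ref{prop100}, does not give enough room for the Siegel-zero estimate: you need $\log Y$ to be of order $L_2(x)$, not $\log p$, for $Y^{\beta-1}$ to start decaying.

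This is exactly why the paper splits instead at $Y = (\log x)^\gamma$ and not at $p^2\log p$: then $\log Y = \gamma L_2(x)$, so $Y^{\beta-1} = (\log x)^{-\gamma(1-\beta)}$, which the paper combines with Stark's bound and $p < L_2(x)^{1-\epsilon}$ to make the Siegel term $O(1)$. The price of the larger threshold is that the range $q \le (\log x)^\gamma$ can no longer be handled by \rmkref{remark 1} (which the paper explicitly declines to use); instead the paper bounds it trivially by $\int_2^{(\log x)^\gamma}\frac{\pi(t)}{t^2}\,dt \ll L_3(x)$, and this is in fact the source of the $O(L_3(x))$ error term in the lemma. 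A smaller point: your claim that the primes $q > p^2\log p$ failing $\log q \gg (\log pN)^2$ are ``finitely many, contributing $O(1)$'' is not quite right as stated — their count grows with $p$ — though the sum $\sum 1/q$ over them is indeed $\ll \log\log p \ll L_4(x)$, so this part survives. The Siegel-zero estimate does not.
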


\begin{proof}
By partial summation, the sum is
\begin{eqnarray*}
\sideset{}{^*}{\sum}_{\substack{q \le x  
\atop a(q) \equiv 0 \!\!\pmod{p}}}\frac{1}{q} ~=~ 
\frac{\pi^{*}(x,p)}{x} ~+~ \int_{2}^{x} \frac{\pi^{*}(t,p)}{t^2}~dt.
\end{eqnarray*}
But $\int_{2}^{x} \frac{\pi^{*}(t,p)}{t^2} dt$ can be written as
\begin{eqnarray*}
\int_{2}^{(\log x)^{\gamma}} \frac{\pi^{*}(t,p)}{t^2} ~dt  
~+~ \int_{(\log x)^{\gamma}}^{x} \frac{\pi^{*}(t,p)}{t^2} ~dt,
\end{eqnarray*}
where $\gamma$ is chosen in such a way that for 
$(\log x)^{\gamma} \le t \le x$,
we have
$\log t \gg (\log p N)^2$. 
The first integral is 
\begin{eqnarray*}
\le \int_{2}^{(\log x)^{\gamma}} \frac{\pi (t)}{t^2} ~dt ~~ \ll L_3(x),
\phantom{m} {\rm where}~~\pi(t) = \# \{p\le t ~\mid p~{\rm prime} \}
\end{eqnarray*}
and the second integral is  
\begin{eqnarray*}
\int_{(\log x)^{\gamma}}^{x} \frac{1}{t^2} 
\left(\delta(p){\rm Li}(t) - \delta(p) {\rm Li}{(t^{\beta})} 
+ O(t e^{-c \sqrt{\log t}})\right) ~dt, \phantom{m} {\rm by~ \thmref{thm 5.1}}. 
\end{eqnarray*}
The first term  is equal to
\begin{eqnarray*}
\delta(p) \int_{(\log x)^{\gamma}}^{x} \frac{dt}{t \log t} + O (L_3(x))\\ 
= \delta(p) L_2(x) + O(L_3(x)).    
\end{eqnarray*}
Next, consider the term with the Siegel zero. Since by \eqref{stark}, 
$\beta <1 - \frac{1}{pn_L}$, therefore the second term is 
\begin{eqnarray*}
\delta(p) \int_{(\log x)^{\gamma}}^{x} \frac{1}{t^2} {\rm Li}(t^{\beta})~dt 
&=& \delta(p) \int_{(\log x)^{\gamma}}^{x}\frac{dt}{t^2}\int_{2}^{t^{\beta}}
\frac{du}{\log u}\\
&=& \delta(p) \int_{2}^{x^{\beta}} \frac{du}{\log u} 
\int_{{\rm max}\left((\log x)^{\gamma}, u^{\frac{1}{\beta}}\right)}^{x}
\frac{dt}{t^2}.  
\end{eqnarray*}
We split the range of integration of $u$ into two integrals:
\begin{eqnarray*}
&(I)&   2\le u \le (\log x)^{\gamma\beta},\\
&(II)&  (\log x)^{\gamma\beta} \le u \le x^{\beta}.
\end{eqnarray*}
The first range gives rise to the integral
$$
\delta(p) \int_{2}^{(\log x)^{\gamma\beta}} \frac{du}{\log u} 
\left\{\frac{1}{(\log x)^{\gamma}} - \frac{1}{x} \right\}
~\ll~ \delta(p) (\log x)^{\gamma(\beta -1)} ~\ll~ 1.
$$
The second range gives rise to the integral
$$
\delta(p) \int_{(\log x)^{\gamma\beta}}^{x^{\beta}} 
\frac{du}{\log u} \left\{\frac{1}{u^{\frac{1}{\beta}}} - \frac{1}{x}\right\}.
$$
Set $v = u^{\frac{1}{\beta}}$. Then $v^{\beta} = u$ and 
$\beta \log v = \log u$.
Moreover, $du = \beta v^{\beta -1}dv$. Hence the integral is
\begin{eqnarray*}
\delta(p) \int_{(\log x)^{\gamma}}^{x} \frac{\beta v^{\beta-1}dv}{\beta \log v} 
\left(\frac{1}{v} - \frac{1}{x}\right) 
&\ll & \frac{\delta(p)}{(\log x)^{\gamma(1-\beta)}} 
   \int_{(\log x)^{\gamma}}^{x} \frac{dv}{v \log v}\\
&\ll & \frac{\delta(p)L_2(x)}{(\log x)^{\frac{\gamma}{pn_L}}}\\
&\ll & \frac{\delta(p)L_2(x)}{e^{{\frac{\gamma}{n_L}}{L_2(x)^{\epsilon}}}} ~\ll~1.
\end{eqnarray*}

Finally, using the elementary estimate $e^{c \sqrt{u}}\gg u^2$,
we deduce that the O-term is
\begin{eqnarray*}
\ll \int_{L_2(x)}^{\log x} \frac{du}{u^2} \ll 1 .
\end{eqnarray*}
The term $\pi^{*}(x,p)/x$ is of smaller order. This proves the 
lemma.
\end{proof}
\medskip\par
Define $\nu(p,n) = \# \{ q^m||n \mid a(q^m) \equiv 0 \!\! \pmod{p}\}$.

\begin{lem}\label{2}
Assume that $p < y$. Then we have
\begin{eqnarray*}
\sideset{}{^*}{\sum}_{\substack{n\le x}} \nu(p,n) =  (1 + o(1))
{\frac{u_f \delta(p)x L_2(x)}{\sqrt{\pi\log x}}}
~+~ O\left(\frac {x L_3(x)}{\sqrt{\log x}}\right),
\end{eqnarray*}
where ${\sum}_{n\le x}^{*}$ means that the summation is over all 
natural numbers $n \le x$ such that $a(n) \ne 0$.
\end{lem}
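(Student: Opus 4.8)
The plan is to unfold $\nu(p,n)$ — which counts the prime powers $q^m\|n$ with $a(q^m)\equiv0\pmod p$ — and interchange the order of summation. Since $a$ is multiplicative, $q^m\|n$ together with $a(n)\ne0$ force $a(q^m)\ne0$, so $\#\{n\le x:q^m\|n,\ a(n)\ne0\}=M_{f,q^m}(x)-M_{f,q^{m+1}}(x)$ and
$$
\sideset{}{^*}{\sum}_{n\le x}\nu(p,n)=\sum_{q}\ \sum_{\substack{m\ge1\\ a(q^m)\equiv0\,(p)}}\bigl(M_{f,q^m}(x)-M_{f,q^{m+1}}(x)\bigr).
$$
The bulk of this will come from $m=1$ with $q$ running over the sparse set of primes having $a(q)\equiv0\pmod p$, everything else going into the error term. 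For each of the finitely many primes $q$ with $a(q)\ne0$ but $i_f(q)>0$ (finite by \lemref{cnta}) the inner sum telescopes into $\le M_{f,q}(x)\ll x/\sqrt{\log x}$, so these contribute $O(x/\sqrt{\log x})$ altogether; hence below I may assume $i_f(q)=0$, and then \propref{xi} gives $\xi_{q^m}(1)=1$ for all $m$.

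For the main term I take $q\le x^{1/3}$ and apply \propref{nonzero} to $M_{f,q}(x)$ and $M_{f,q^2}(x)$ (legitimate, since by \propref{xi} the coefficients of $\xi_q$ and $\xi_{q^2}$ satisfy the hypotheses of Section~\ref{convolution}). Because $q^2\le x^{2/3}$, every logarithm that occurs equals $(1+O(\log q/\log x))\log x$; replacing it by $\log x$ and summing,
$$
\sideset{}{^*}{\sum}_{\substack{q\le x^{1/3}\\ a(q)\equiv0\,(p)}}\bigl(M_{f,q}(x)-M_{f,q^2}(x)\bigr)=\bigl(1+O(1/L_2(x))\bigr)\frac{u_fx}{\sqrt{\pi\log x}}\sideset{}{^*}{\sum}_{\substack{q\le x^{1/3}\\ a(q)\equiv0\,(p)}}\Bigl(\frac1q-\frac1{q^2}\Bigr)+O\left(\frac{xL_3(x)}{\sqrt{\log x}}\right),
$$
where the error also absorbs the $O$-terms of \propref{nonzero} and the cost of the logarithm replacement, which is governed by $\sideset{}{^*}{\sum}_{q\le x^{1/3},\,a(q)\equiv0\,(p)}(\log q)/q\ll(\log x)/p$ (partial summation plus \propref{prop99}). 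Since $|a(q)|\le2\sqrt q$, a prime with $a(q)\equiv0\pmod p$ and $a(q)\ne0$ satisfies $q\gg p^2$, so $\sideset{}{^*}{\sum}_{q,\,a(q)\equiv0\,(p)}q^{-2}\ll p^{-2}$; and by \lemref{1} (used at $x$; the tail $x^{1/3}<q\le x$ contributes $O(1)$ by \propref{prop99}) one has $\sideset{}{^*}{\sum}_{q\le x^{1/3},\,a(q)\equiv0\,(p)}q^{-1}=\delta(p)L_2(x)+O(L_3(x))$. Putting these together gives the asserted main term $(1+o(1))u_f\delta(p)xL_2(x)/\sqrt{\pi\log x}$ up to $O(xL_3(x)/\sqrt{\log x})$.

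It remains to bound the discarded pieces by $O(xL_3(x)/\sqrt{\log x})$. For $m=1$ and $x^{1/3}<q\le x$ one has $\#\{n\le x:q\|n,a(n)\ne0\}\le M_{f,1}(x/q)+O(x^{1/3})$, and since $M_{f,1}(y)\sim u_fy/(\sqrt\pi(\log y)^{1/2})$ gives $M_{f,1}(y)\ll y/\sqrt{\log y}$ for $y\ge3$, I decompose dyadically in $r=x/q$: the block $r\in[2^j,2^{j+1})$ contains $\le\pi^{*}(x/2^j,p)\ll x/(2^jp\log x)$ admissible primes (by \propref{prop99}), each contributing $\ll2^j/\sqrt{j}$, so the block sums to $\ll x/(p\sqrt{j}\log x)$; summing over $j\ll\log x$ with $\sum_jj^{-1/2}\ll\sqrt{\log x}$ yields $O(x/(p\sqrt{\log x}))=O(xL_3(x)/\sqrt{\log x})$, the $O(x^{1/3})$ remainders summing to $O(x^{5/6})$. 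For $m\ge2$, writing $a(q^m)=(\alpha_q^{m+1}-\beta_q^{m+1})/(\alpha_q-\beta_q)$ with $\alpha_q\beta_q=q$, the congruence $a(q^m)\equiv0\pmod p$ forces $m+1$ into a fixed residue class modulo the order of $\alpha_q/\beta_q$ in $({\mathcal O}_K/p)^{\times}$ (inert and ramified $q$ never giving a nonzero $a(q^m)$ divisible by $p$, since there $a(q^m)$ has norm prime to $p$), whence $\sum_{m\ge2,\,a(q^m)\equiv0\,(p)}q^{-m}\ll q^{-2}$; combined with $\#\{n\le x:q^m\|n,a(n)\ne0\}\ll x/(q^m\sqrt{\log x})$ for $q^m\le\sqrt x$ and trivial bounds for $q^m>\sqrt x$, the total is $\ll(x/\sqrt{\log x})\sum_qq^{-2}\ll x/\sqrt{\log x}$. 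Summing the four contributions proves the lemma.

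The step I expect to be the real obstacle is the large-$q$ range just handled: the trivial bound $\#\{n\le x:q\|n\}\le x/q$ alone only gives a total of order $x/p$, far larger than the admissible error, so one genuinely needs to exploit both the $(\log y)^{-1/2}$ saving in $M_{f,1}(y)$ and the $(\log t)^{-1}$ density saving of \propref{prop99}, and to make them balance — uniformly in $p<y$ — through the dyadic decomposition.
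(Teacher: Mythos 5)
Your proposal is correct and shares the skeleton of the paper's proof — interchange summation, dismiss the $m\ge 2$ contribution as $O(x/\sqrt{\log x})$, and obtain the main term from small $q$ via \propref{nonzero} and \lemref{1} — but the treatment of the large-$q$ tail and the choice of truncation point are genuinely different, and each side gains something. The paper cuts at $q\le x^{1/\log\log x}$, which makes $\log(x/q)=(1+o(1))\log x$ automatic and removes all log-replacement bookkeeping; it then handles the tail in two simple steps, bounding $x^{1/\log\log x}<q\le x^\epsilon$ by the crude $\sum_{q}1/q\ll L_3(x)$ over \emph{all} primes in that interval, and $q>x^\epsilon$ by noting that each $n\le x$ has at most $1/\epsilon$ prime factors of size $>x^\epsilon$, so the inner sum is bounded and everything is $\ll M_{f,1}(x)\ll x/\sqrt{\log x}$. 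You cut at $q\le x^{1/3}$, which forces an explicit log-replacement cost; you control it correctly by $\sideset{}{^*}{\sum}(\log q)/q\ll(\log x)/p$ via \propref{prop99} and partial summation, and the resulting $O(x/(p\sqrt{\log x}))$ sits comfortably inside the permitted error. For $q>x^{1/3}$ your dyadic decomposition balancing $\pi^*(x/2^j,p)\ll (x/2^j)/(p\log x)$ against $M_{f,1}(2^{j+1})\ll 2^{j}/\sqrt{j}$ does give $\sum_j x/(p\sqrt{j}\log x)\ll x/(p\sqrt{\log x})$, uniformly for $p<y$ since $x/2^j\ge x^{1/3}\gg p^2\log(x/2^j)$ throughout; this is more work than the paper's observation, but it also gives a bound with the extra $1/p$ saving, which the paper's trivial $\sum 1/q$ estimate deliberately foregoes. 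Two minor points: the multiplicative factor you wrote as $(1+O(1/L_2(x)))$ is not literally a uniform multiplicative error (the per-term ratio $\log q/\log x$ reaches $1/3$), but since you subsume the discrepancy into the additive $O(xL_3(x)/\sqrt{\log x})$ term the argument is sound; and the residue-class remark about $m+1$ in the $m\ge2$ paragraph is unnecessary — the trivial $\sum_{m\ge2}q^{-m}\ll q^{-2}$ already suffices, exactly as the paper uses.
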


\begin{proof}
Interchanging summation, we see that
\begin{eqnarray*}
\sideset{}{^*}{\sum}_{\substack{n\le x}} \nu(p,n) 
&=& \sideset{}{^*}{\sum}_{\substack{q^m \le x  \atop a(q^m) 
\equiv 0 \!\!\!\pmod{p}}} ~\sideset{}{^*}{\sum}_{\substack{n \le x  \atop q^m ||n}} 1 \\
&=& \sideset{}{^*}{\sum}_{\substack{q\le x  
\atop a(q) \equiv 0 \!\!\!\pmod{p}}} \sideset{}{^*}{\sum}_{\substack{n\le x  \atop q||n}} 1 
\phantom{m} + \phantom{m} 
\sideset{}{^*}{\sum}_{\substack{q^m \le x, m\ge 2  \atop a(q^m)
\equiv 0 \!\!\! \pmod{p}}} \sideset{}{^*}
{\sum}_{\substack{n\le x  \atop q^m ||n}} 1 .
\end{eqnarray*}
The contribution of terms $q^m$ with $m \ge 2$ is 
\begin{eqnarray*}
\sideset{}{^*}{\sum}_{\substack{q^m \le x, \atop {m\ge 2 
 \atop a(q^m) \equiv 0\! \!\!\!\pmod{p}}}}
\sideset{}{^*}{\sum}_{\substack{n\le x  \atop q^m ||n}} 1
&=& 
\sideset{}{^*}{\sum}_{\substack{q^m \le x^{\epsilon}
\atop {m\ge 2  \atop a(q^m) \equiv 0\! \!\!\!\pmod{p}}}} 
\sideset{}{^*}{\sum}_{\substack{n\le x  \atop q^m ||n}} 1
\phantom{m} + \phantom{m} 
\sideset{}{^*}{\sum}_{\substack{x^{\epsilon} \le q^m \le x 
\atop {m\ge 2  \atop a(q^m) \equiv 0\! \!\!\!\pmod{p}}}} 
\sideset{}{^*}{\sum}_{\substack{n\le x  \atop q^m ||n}} 1 \\ \\
&\ll & 
\sideset{}{^*}{\sum}_{\substack{q^m \le x^{\epsilon}
\atop {m\ge 2  \atop a(q^m) \equiv 0\! \!\!\!\pmod{p}}}} 
\sideset{}{^*}{\sum}_{\substack{n\le x/q^m }} 1
\phantom{m} + \phantom{m}  
x \sideset{}{^*}{\sum}_{\substack{x^{\epsilon} \le q^m \le x
\atop  m \ge 2}} \frac{1}{q^m} 
\end{eqnarray*}
\begin{eqnarray*}
&\ll & 
\frac{x}{(\log x)^{\frac{1}{2}}}
\sideset{}{^*}{\sum}_{\substack{q^m \le x^{\epsilon} 
\atop m \ge 2}} \frac{1}{q^m}
\phantom{m} + \phantom{m}  
x \int_{x^{\epsilon}}^{x} \frac{dt}{t^2}, 
~~~{\rm by~ \propref{nonzero}} \\ \\
&\ll & 
\frac{x}{\sqrt{\log x}} \phantom{m} + \phantom{m}
 \frac{x}{x^{\epsilon}} 
~~\ll ~~
\frac{x}{\sqrt{\log x}}.
\end{eqnarray*}
Also, we have
\begin{equation}\label{20}
\sideset{}{^*}{\sum}_{\substack{q\le x  
\atop a(q) \equiv 0 \!\!\!\pmod{p}}} \sideset{}{^*}{\sum}_{\substack{n\le x  \atop q||n}} 1 
=
\sideset{}{^*}{\sum}_{\substack{ q \le x^{1/\log\log x}  
\atop a(q) \equiv 0\!\!\! \pmod{p}}}
\sideset{}{^*}{\sum}_{\substack{n\le x  \atop q||n}} 1 
\phantom{m} + \phantom{m} 
\sideset{}{^*}{\sum}_{\substack{x^{1/\log\log x} \le q\le x} 
\atop a(q) \equiv 0\!\!\! \pmod{p}}
\sideset{}{^*}{\sum}_{\substack{n\le x  \atop q||n}} 1. 
\end{equation}
We show that the second double sum on the right of
\eqref{20} contributes a negligible
amount. Indeed, consider first the quantity
\begin{equation}\label{october1}
\sideset{}{^*}{\sum}_{\substack{x^\epsilon \le q\le x} \atop 
a(q) \equiv 0\!\!\! \pmod{p}}
\sideset{}{^*}{\sum}_{\substack{n\le x  \atop q||n}} 1.
\end{equation}
This is majorized by
\begin{equation}\label{october2}
\sideset{}{^*}{\sum}_{n\le x}
\sideset{}{^*}{\sum}_{\substack{x^\epsilon \le q\le x} \atop q||n} 1.
\end{equation}
The inner sum is bounded and so by \propref{nonzero},
we see that \eqref{october1} is
\begin{equation}\label{october3}
\ll x/\sqrt{\log x}.
\end{equation}
Now, consider the quantity
\begin{equation}\label{october4}
\sideset{}{^*}{\sum}_{\substack{x^{1/\log\log x} \le q\le 
x^\epsilon} \atop 
a(q) \equiv 0\!\!\! \pmod{p}}
\sideset{}{^*}{\sum}_{\substack{n\le x  \atop q||n}} 1.
\end{equation}
By \propref{nonzero}, the inner sum is
\begin{equation}\label{october5}
\ll\ x/q\sqrt{\log x}.
\end{equation}\label{october6}
Since
$$
\sum_{x^{1/\log\log x} \leq q \leq x^\epsilon} \frac{1}{q}
\ =\ \log\log\log x \ +\ O(1),
$$
it follows that \eqref{october4} is
\begin{equation}\label{october7}
\ll\ xL_3(x)/\sqrt{\log x}.
\end{equation}
Putting \eqref{october3} and \eqref{october7} together, we
deduce that 
\begin{eqnarray*}
\sideset{}{^*}{\sum}_{\substack{q\le x  \atop a(q) \equiv 0 \!\!\!\pmod{p}}} 
\sideset{}{^*}{\sum}_{\substack{n\le x  \atop q||n}} 1 
&=&
\sideset{}{^*}{\sum}_{\substack{ q \le x^{1/\log\log x}  
\atop a(q) \equiv 0\!\!\! \pmod{p}}}
\sideset{}{^*}{\sum}_{\substack{n\le x  \atop q||n}} 1 
\phantom{m} + O(xL_3(x)/\sqrt{\log x}). 
\end{eqnarray*}
Now by \propref{nonzero}, \lemref{obvious} (and the 
fact that in the sum $a^0(q)=1$), the sum on the right
 is equal to
\begin{eqnarray*}
&& (1 + o(1))\frac{u_f x}{\sqrt{\pi}} 
\sideset{}{^*}{\sum}_{\substack{q\le x^{1/\log\log x} \atop a(q) \equiv 0\!\!\! \pmod{p}}} 
\frac{1}{q\sqrt{\log x/q}}\left(1 + O\left(\frac{1}{q}\right)
+ O\left(\frac{1}{\log x/q}\right) \right) \\
&=& 
(1 + o(1))\frac{u_f x}{\sqrt{\pi}}
\sideset{}{^*}{\sum}_{\substack{q \le x^{1/\log\log x}  \atop a(q) 
\equiv 0 \!\!\! \pmod{p}}} \frac{1}{q\sqrt{\log x/q}} ~+~  
O\left(\frac{x}{(\log x)^{\frac{1}{2}}}\right). \\ 
\end{eqnarray*}
Now applying \lemref{1}, we see that this is
\begin{eqnarray*}
&=& (1 + o(1)){\frac{u_f \delta(p)x L_2(x)}{\sqrt{\pi}(\log x)^{\frac{1}{2}}}} 
~+~ O\left(\frac {x L_3(x)}{(\log x)^{\frac{1}{2}}}\right).
 \\
\end{eqnarray*}
This proves the lemma. 
\end{proof}

\begin{lem}\label{3}
Assume $p < y$. Then
\begin{eqnarray*}
\sideset{}{^*}{\sum}_{\substack{n\le x}} \nu(p,n)^2 = (1 + o(1))
\frac{u_f \delta^2(p) x L^2_2(x)}{\sqrt{\pi}(\log x)^{\frac{1}{2}}} + 
O\left( \frac{\delta(p) x L_2(x)L_3(x)}{(\log x)^{\frac{1}{2}}}\right).
\end{eqnarray*}
\end{lem}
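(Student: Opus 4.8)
The plan is to follow the proof of \lemref{2}, this time with the square. Splitting the pairs $q_1^{m_1}\|n$, $q_2^{m_2}\|n$ according to whether $q_1=q_2$ (which forces $m_1=m_2$) or $q_1\neq q_2$, one gets
$$
\sideset{}{^*}{\sum}_{n\le x}\nu(p,n)^2 \;=\; \sideset{}{^*}{\sum}_{n\le x}\nu(p,n)\;+\;\sideset{}{^*}{\sum}_{\substack{q_1^{m_1}\ne q_2^{m_2}\\ a(q_i^{m_i})\equiv 0\,(p)}}\#\{\,n\le x:\ q_1^{m_1}||n,\ q_2^{m_2}||n,\ a(n)\ne 0\,\}.
$$
By \lemref{2} the diagonal sum is $O\bigl(\delta(p)\,x\,L_2(x)L_3(x)/(\log x)^{1/2}\bigr)$ — its $O\bigl(xL_3(x)/(\log x)^{1/2}\bigr)$ piece is swallowed because $p<y$ forces $\delta(p)L_2(x)\gg L_2(x)^{\epsilon}\to\infty$ — so it lies inside the error term of \lemref{3}. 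Everything then rests on the off-diagonal double sum.

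For the double sum I would run the same truncations as in the proof of \lemref{2}. The terms with $\max(m_1,m_2)\ge 2$ contribute $\ll \delta(p)\,x\,L_2(x)/(\log x)^{1/2}$: the prime-power tail $\sum_{q^m,\,m\ge 2}q^{-m}$ converges, the remaining sum over primes $q$ with $a(q)\equiv 0\,(p)$ is $O(\delta(p)L_2(x))$ by \lemref{1}, and \propref{nonzero} (with a trivial bound in the large range) supplies the $x/(\log x)^{1/2}$ factor, exactly as in the $m\ge 2$ bound of \lemref{2}. For the remaining terms ($m_1=m_2=1$), whenever some $q_i$ exceeds $x^{\epsilon}$ the count is dominated by $O_\epsilon\bigl(\sideset{}{^*}{\sum}_{n\le x}\nu(p,n)\bigr)$, since only $O_\epsilon(1)$ primes larger than $x^{\epsilon}$ can exactly divide a given $n$; this is inside the error by \lemref{2}. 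When both $q_i\le x^{\epsilon}$ but some $q_i$ exceeds $x^{1/\log\log x}$, the modulus $q_1q_2\le x^{2\epsilon}<x/\log x$ lets us apply \propref{nonzero}, and then \lemref{1} together with $\sum_{x^{1/\log\log x}\le q\le x^{\epsilon}}q^{-1}=L_3(x)+O(1)$ bounds the contribution by $\ll \delta(p)\,x\,L_2(x)L_3(x)/(\log x)^{1/2}$ — this is the range that saturates the error.

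What remains is the main term
$$
\frac{u_f\,x}{\sqrt{\pi}}\ \sideset{}{^*}{\sum}_{\substack{q_1\ne q_2\le x^{1/\log\log x}\\ a(q_i)\equiv 0\,(p)}}\ \frac{1+O(q_1^{-1})+O(q_2^{-1})}{q_1 q_2\,(\log(x/q_1q_2))^{1/2}},
$$
extracted from \propref{nonzero} and \lemref{obvious} just as its single-prime analogue in \lemref{2}: here $\xi_{q_1q_2}(1)=a^0(q_1)a^0(q_2)+O(q_1^{-1})+O(q_2^{-1})=1+O(q_1^{-1})+O(q_2^{-1})$ since $a(q_i)\ne 0$. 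For $q_1,q_2\le x^{1/\log\log x}$ we have $\log(x/q_1q_2)=(\log x)\bigl(1+O(1/\log\log x)\bigr)$, so the display reduces to
$$
\frac{u_f\,x}{\sqrt{\pi\log x}}\left[\Bigl(\,\sideset{}{^*}{\sum}_{\substack{q\le x^{1/\log\log x}\\ a(q)\equiv 0\,(p)}}\tfrac1q\Bigr)^{2}\ -\ \sideset{}{^*}{\sum}_{q}\tfrac1{q^2}\right]\bigl(1+o(1)\bigr)\ +\ (\text{admissible errors}).
$$
By \lemref{1} the inner sum is $\delta(p)L_2(x)+O(L_3(x))$, whose square is $\delta^2(p)L_2^2(x)+O(\delta(p)L_2(x)L_3(x))$ — once more $\delta(p)L_2(x)\gg L_2(x)^{\epsilon}$ absorbs the $O(L_3(x)^2)$ — while $\sum_q q^{-2}=O(1)$ is negligible. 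Collecting everything yields the claimed
$$
\sideset{}{^*}{\sum}_{n\le x}\nu(p,n)^2=(1+o(1))\frac{u_f\,\delta^2(p)\,x\,L_2^2(x)}{\sqrt{\pi}\,(\log x)^{1/2}}+O\!\left(\frac{\delta(p)\,x\,L_2(x)L_3(x)}{(\log x)^{1/2}}\right).
$$

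The step I expect to be the real work is not any single estimate but the uniform bookkeeping in $p<y$: verifying that the only contribution of the order of the error comes from $q\in[x^{1/\log\log x},x^{\epsilon}]$, and that every secondary term — the $O(q^{-1})$ corrections to $\xi_{q_1q_2}(1)$, the removal of the diagonal $\sum q^{-2}$ when passing from $(\sum 1/q)^2$ to the $q_1\ne q_2$ sum, the $O(1/\log\log x)$ loss from replacing $\log(x/q_1q_2)$ by $\log x$, the $O(L_3(x)^2)$ incurred on squaring \lemref{1}, and the genuine $O(x2^{\nu(d)}/d(\log x/d)^{3/2})$ errors of \propref{nonzero} summed over $d=q_1q_2$ — is dominated by $\delta(p)\,x\,L_2(x)L_3(x)/(\log x)^{1/2}$. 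The inequality $\delta(p)\gg 1/y=L_2(x)^{-(1-\epsilon)}$ is precisely what makes all these comparisons go through.
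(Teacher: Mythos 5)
Your proposal is correct and takes essentially the same route as the paper: expand $\nu(p,n)^2$ into a double sum over prime-power pairs, absorb the diagonal via \lemref{2}, truncate away large prime powers and higher powers using \propref{nonzero} and \lemref{1}, and recover the main term from the square-free small-prime range by squaring \lemref{1}. Your bookkeeping is in fact a bit more careful than the paper's — you retain the $\delta(p)$ factor in the secondary estimates (e.g.\ the diagonal, the $m_i\ge 2$ terms, the large-$q_i$ terms), where the paper writes several of these only as $\ll xL_2(x)/(\log x)^{1/2}$, and you explicitly note that $\delta(p)L_2(x)\gg L_2(x)^\epsilon$ is what makes those terms fit inside the stated error uniformly for $p<y$.
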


\begin{proof}
The sum in question is equal to 
\begin{eqnarray*}
\sideset{}{^*}{\sum}_{\substack{q_1^{m_1} \le x  \atop a(q_1^{m_1}) 
\equiv 0\!\!\! \pmod{p}}} 
~~\sideset{}{^*}{\sum}_{\substack{q_2^{m_2}\le x  \atop a(q_2^{m_2}) 
\equiv 0 \!\!\! \pmod{p}}} 
~~\sideset{}{^*}{\sum}_{\substack{n\le x  
\atop q_1^{m_1}||n,~ q_2^{m_2}||n}} 1.
\end{eqnarray*}
By a small modification to the argument given in the proof of
\lemref{2}, we find that 
the contribution of terms with $q_1=q_2$ is 
$$
\ll\ \frac{xL_2(x)}{(\log x)^{1/2}}.
$$
Next, we consider the contribution $S$ (say) of 
terms with $q_1^{m_1}q_2^{m_2} > x^\epsilon$. 
For estimating this, we may suppose that $q_1^{m_1} > q_2^{m_2}$.
Since $q_2 \geq 2$, we may suppose that $x/2 \geq\ q_1^{m_1}
\geq \ x^{\epsilon/2}\ =\ z$ (say). 

Denote by $S_1$ the contribution of terms for which
$z \leq q_1^{m_1} \leq \sqrt{x/2}$ and by $S_2$ the contribution of
all remaining terms in $S$. Then by \propref{nonzero}, we have
\begin{eqnarray*}
S_1\ 
&\ll &  x \sideset{}{^*}{\sum}_{z \le \substack{q_1^{m_1} \le \sqrt{x/2}}}
\frac{1}{q_1^{m_1}}
\sum_{q_2^{m_2} \leq q_1^{m_1}}
\frac{1}{q_2^{m_2}\sqrt{\log \frac{x}{q_1^{m_1}q_2^{m_2}}}}
\\ \\
&\ll & x\sum_{z \leq q_1^{m_1} \leq \sqrt{x/2}}
\frac{1}{q_1^{m_1}\sqrt{\log \frac{x}{q_1^{2m_1}}}}
\log\log (q_1^{m_1}) \\ \\
&\ll & xL_2(x) \int_z^{\sqrt{x/2}} \frac{dt}{t(\log t)\sqrt{\log x/t^2}}
\ \ll\ \frac{xL_2(x)}{\sqrt{\log x}}.
\end{eqnarray*}
Next, we observe that
$$
S_2\ \ll\ \sum_{\sqrt{x/2}\ < q_1^{m_1} \leq x/2}~~~
\sideset{}{^*}{\sum}_{n \leq x/q_1^{m_1}} \nu(p,n)
$$
and by \lemref{2}, this is
$$
\ll \ xL_2(x)\sum_{\sqrt{x/2} < q_1^{m_1}\ \leq x/2}
\frac{1}{q_1^{m_1}}
\frac{1}{\sqrt{\log x/q_1^{m_1}}}
\ \ll\ \frac{xL_2(x)}{\sqrt{\log x}}.
$$
It remains to estimate
\begin{eqnarray*}
&& \sideset{}{^*}{\sum}_{\substack{q_1^{m_1}q_2^{m_2} \le x^{\epsilon}  
\atop {a(q_1^{m_1}) \equiv 0\!\!\! \pmod{p} 
\atop a(q_2^{m_2}) \equiv 0 \!\!\! \pmod{p}}}} 
~~\sideset{}{^*}{\sum}_{\substack{n\le x \atop q_1^{m_1}||n,~ q_2^{m_2}||n }} 1 \\ 
&& \phantom{mm} =~~ 
I + J, \phantom{m} {\rm say}
\end{eqnarray*}
where in $I$ we have the terms with $m_1 > 1$ or $m_2 > 1$ and in $J$ 
we have the terms with $m_1 = m_2 = 1$. 
In order to estimate $I$, suppose without loss of generality that
$m_1 \ge 2$. Then by \propref{nonzero}, we have
\begin{eqnarray*}
I & \ll & 
x \sideset{}{^*}{\sum}_{\substack{q_1^{m_1} \atop m_1\ge 2 }} \frac{1}{q_1^{m_1}}
~~\sideset{}{^*}{\sum}_{\substack{q_2^{m_2} \atop q_1^{m_1}q_2^{m_2} \le 
x^{\epsilon}}} \frac{1}{q_2^{m_2}\sqrt{\log \frac{x}{q_1^{m_1}q_2^{m_2}}}} \\ \\
& \ll & 
\frac{x}{\sqrt{\log x}} \sideset{}{^*}{\sum}_{\substack{q_1^{m_1} 
\atop m_1\ge 2 }} \frac{1}{q_1^{m_1}} 
\left(\sum _{q_2 \le x^{\epsilon}} \frac{1}{q_2} ~+~ \sum_{q_2 \atop m_2 \ge 2} 
\frac{1}{q_2^{m_2}}\right) \\ \\
& \ll &  \frac{x L_2(x)}{\sqrt{\log x}}.  
\end{eqnarray*}
Next, we consider 
\begin{eqnarray*}
J & = & \sideset{}{^*}{\sum}_{\substack{q_1q_2 \le x^{\epsilon}  
\atop {a(q_1) \equiv 0\!\!\! \pmod{p} 
\atop a(q_2) \equiv 0 \!\!\! \pmod{p}}}} 
~~\sideset{}{^*}{\sum}_{\substack{n\le x \atop q_1||n,~ q_2||n }} 1 \\ 
\end{eqnarray*}
By \propref{nonzero} and \propref{xi}, we have
\begin{eqnarray*}
J &=&  (1 + o(1))
\frac{u_f x}{\sqrt{\pi\log x}}
\sideset{}{^*}{\sum}_{\substack{q_1q_2 \le x^{\epsilon} 
\atop {a(q_1) \equiv 0\!\!\! \pmod{p} \atop {a(q_2)\equiv 0\!\!\! \pmod{p} \atop q_1 \ne q_2}}}} 
\frac{1}{q_1q_2} ~+~  O\left(\frac{xL_2(x)}{\sqrt{\log x}}\right)  \\ \\
&=& (1 + o(1))
\frac{u_f x}{\sqrt{\pi\log x}} 
\left(\sideset{}{^*}{\sum}_{\substack
{q \le x  \atop a(q) \equiv 0 \!\!\! \pmod{p}}} \frac{1}{q} 
\right)^2 ~+~  O\left(\frac{xL_2(x)}{\sqrt{\log x}}\right) \\ \\
&=& (1 + o(1))
\frac{u_f x}{\sqrt{\pi\log x}} \left(\delta(p)L_2(x) 
+ O(L_3(x)) \right)^{2} ~+~ O\left(\frac{x L_2(x)}{\sqrt{\log x}} \right)  
\end{eqnarray*}
\begin{eqnarray*}
&=& (1 + o(1))
\frac{u_f \delta^2(p)x L_2^2(x)}{\sqrt{\pi}(\log x)^{\frac{1}{2}}} ~+~ 
O\left(\delta(p)\frac{xL_2(x)L_3(x)}{\sqrt{\log x}}\right).
\end{eqnarray*}
This proves the lemma.
\end{proof}

\begin{lem}\label{4}
Suppose $p< y$, then 
\begin{eqnarray*}
\sideset{}{^*}{\sum}_{\substack{n\le x}}\left(\nu(p,n) 
- \delta(p) L_2(x)\right)^2 
\ll \frac{\delta(p) x}{(\log x)^{\frac{1}{2}}} L_2(x)L_3(x).
\end{eqnarray*}
\end{lem}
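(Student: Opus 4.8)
The plan is to run a Tur\'an-type second-moment computation, in the spirit of the Tur\'an--Kubilius inequality, using \lemref{1}, \lemref{2}, \lemref{3} and \propref{nonzero}. Throughout write $\sum^{*}$ for a sum restricted to integers (resp.\ primes) at which $a_f$ does not vanish, and set $S_j=\sum_{n\le x}^{*}\nu(p,n)^{j}$ for $j=0,1,2$, so $S_0=M_{f,1}(x)$. Expanding the square,
\[
\sum_{n\le x}^{*}\left(\nu(p,n)-\delta(p)L_2(x)\right)^{2}
\ =\ S_2\ -\ 2\delta(p)L_2(x)\,S_1\ +\ \delta(p)^{2}L_2(x)^{2}\,S_0 ,
\]
so everything reduces to evaluating $S_0,S_1,S_2$. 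We have $S_0=M_{f,1}(x)=\frac{u_f x}{\sqrt{\pi}(\log x)^{1/2}}+O\!\left(\frac{x}{(\log x)^{3/2}}\right)$ by \propref{nonzero} with $d=1$, while \lemref{2} and \lemref{3} give $S_1$ and $S_2$ with main terms $\frac{u_f\delta(p)xL_2(x)}{\sqrt{\pi\log x}}$ and $\frac{u_f\delta(p)^{2}xL_2(x)^{2}}{\sqrt{\pi\log x}}$, which are precisely $\delta(p)L_2(x)$ and $\delta(p)^{2}L_2(x)^{2}$ times the main term of $S_0$; hence the main terms are set up to cancel with coefficients $1,-2,1$.

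To make that cancellation rigorous down to the level of the claimed error (the $(1+o(1))$ factors nominally present in \lemref{2}--\lemref{3} would otherwise leave a term of order $o\!\left(\delta(p)^{2}xL_2(x)^{2}/\sqrt{\log x}\right)$, which need not be $\ll\delta(p)xL_2(x)L_3(x)/\sqrt{\log x}$), I would instead invoke the sharper forms that the proofs of those lemmas actually produce: with $\kappa=\frac{u_f x}{\sqrt{\pi\log x}}$ and $T=\sum_{q\le x,\,p\mid a(q)}^{*}\tfrac1q$,
\[
S_0=\kappa+O\!\left(\frac{x}{(\log x)^{3/2}}\right),\qquad
S_1=\kappa T+O\!\left(\frac{xL_3(x)}{\sqrt{\log x}}\right),\qquad
S_2=\kappa T^{2}+O\!\left(\frac{\delta(p)xL_2(x)L_3(x)}{\sqrt{\log x}}\right),
\]
with the \emph{same} constant $\kappa$. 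The point is that the $(1+o(1))$ there arises only from replacing $\log(x/q)$ by $\log x$ for $q\le x^{1/\log\log x}$ and from the $O(1/q)+O(1/\log(x/q))$ factors supplied by \propref{nonzero} and \lemref{obvious}; summing each of these against $\sum^{*}1/q$ contributes only $O(x/\sqrt{\log x})\ll O(xL_3(x)/\sqrt{\log x})$, so no stray multiplicative factor survives (and for \lemref{3} one keeps the congruence condition on the primes throughout, so that a factor $\delta(p)$ remains in the off-diagonal error). Now
\[
S_2-2\delta(p)L_2(x)S_1+\delta(p)^{2}L_2(x)^{2}S_0
=\kappa\left(T-\delta(p)L_2(x)\right)^{2}+O\!\left(\frac{\delta(p)xL_2(x)L_3(x)}{\sqrt{\log x}}\right),
\]
where the $O$-term collects the three errors above (the one from $S_0$ is $O\!\left(\delta(p)^{2}xL_2(x)^{2}/(\log x)^{3/2}\right)\ll\delta(p)xL_2(x)L_3(x)/\sqrt{\log x}$, using $\delta(p)L_2(x)\ll\log x$ and $L_3(x)\ge1$). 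By \lemref{1}, $T-\delta(p)L_2(x)=O(L_3(x))$, so $\kappa\left(T-\delta(p)L_2(x)\right)^{2}=O\!\left(xL_3(x)^{2}/\sqrt{\log x}\right)$, and since $p<y=L_2^{1-\epsilon}(x)$ together with $\delta(p)\gg p^{-1}$ (\thmref{thm 5.1}) gives $\delta(p)L_2(x)\gg L_2(x)^{\epsilon}\gg L_3(x)$, this is $\ll\delta(p)xL_2(x)L_3(x)/\sqrt{\log x}$, which is the asserted estimate.

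The only step that is not formal manipulation is the passage to the sharper forms of \lemref{2} and \lemref{3}, i.e.\ verifying that the $(1+o(1))$-fluctuations in their proofs are absorbed by the displayed $O$-terms and that the factor $\delta(p)$ is retained in the error coming from \lemref{3}; this is routine bookkeeping but has to be done carefully, since, as noted, a genuine multiplicative $(1+o(1))$ on the main term of $S_1$ or $S_2$ would not be affordable here. Everything after that is the completion of the square above together with the elementary inequality $\delta(p)L_2(x)\gg L_3(x)$, valid on the whole range $p<y$.
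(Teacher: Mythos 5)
Your proof follows the same overall strategy as the paper: expand the square, write the sum as $S_2-2\delta(p)L_2(x)S_1+\delta(p)^2L_2(x)^2S_0$, and plug in the estimates from \lemref{2}, \lemref{3} and \propref{nonzero}. The paper's own proof is literally the one-liner ``This follows from \lemref{2} and \lemref{3}'', so there is nothing here that diverges methodologically.

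What you add is a genuine sharpening of the argument. You are right that the \emph{stated} forms of \lemref{2} and \lemref{3} carry an unspecified factor $(1+o(1))$ on the main term, and that upon combining the three pieces the main terms do not cancel at face value: one is left with an $o(1)$ multiple of $\delta(p)^2xL_2(x)^2/\sqrt{\log x}$, and this need not be $\ll \delta(p)xL_2(x)L_3(x)/\sqrt{\log x}$ unless one knows the rate of decay (indeed, what is needed is $o(1)\ll L_3(x)/(\delta(p)L_2(x))$, which for $p$ near the top of the allowed range decays like a power of $L_2(x)$). Your resolution --- to re-do the argument with the common quantity $T=\sideset{}{^*}{\sum}_{q\le x,\ p\mid a(q)}1/q$ and the shared constant $\kappa=u_fx/\sqrt{\pi\log x}$, so that the main terms telescope \emph{exactly} to $\kappa(T-\delta(p)L_2(x))^2$, and then apply \lemref{1} to bound $T-\delta(p)L_2(x)=O(L_3(x))$ --- is clean and correctly supplies the missing rate information. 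The final step $\delta(p)L_2(x)\gg L_2(x)^\epsilon\gg L_3(x)$ (using $\delta(p)\gg 1/p$ and $p<y=L_2(x)^{1-\epsilon}$) is also correct. You rightly flag, as the one nontrivial verification, that the error terms in the proofs of \lemref{2}--\lemref{3} really do have the shapes you quote (in particular that the congruence condition is retained where needed so the factor $\delta(p)$ appears in the off-diagonal error of \lemref{3}); that accounting is indeed the substance of the matter, and your outline of where the $(1+o(1))$ arises (replacing $\log(x/q)$ by $\log x$, and the $O(1/q)+O(1/\log(x/q))$ corrections) is consistent with the paper's computations. In short: same route as the paper, but with an explicit and well-motivated repair of the error bookkeeping that the paper leaves implicit.
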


\begin{proof}
This follows from \lemref{2} and \lemref{3}.
\end{proof}

\begin{lem}\label{5}
Assume $p < y$, then
\begin{eqnarray*}
\# \left\{ n\le x \mid \nu(p,n) = 0 \right \} 
\ll \frac{xL_3(x)}{\delta(p) (\log x)^{\frac{1}{2}} L_2(x)}.
\end{eqnarray*}
\end{lem}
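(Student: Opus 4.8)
The plan is to derive \lemref{5} from \lemref{4} by the classical second-moment (Chebyshev--Tur\'an) device, treating $\nu(p,n)$ as a function concentrated around its ``mean'' $\delta(p)L_2(x)$. The one preliminary point that must be checked is that the set counted on the left is automatically contained in the integers $n\le x$ with $a(n)\ne 0$, so that the starred summation in \lemref{4} applies with no loss. Indeed, if $a(n)=0$, then by multiplicativity of $a$ together with $a(1)=1$ there is a prime power $q^m\|n$ with $a(q^m)=0$, and such a $q^m$ trivially satisfies $a(q^m)\equiv 0\pmod{p}$, so that $\nu(p,n)\ge 1$. Contrapositively, $\nu(p,n)=0$ forces $a(n)\ne 0$, whence
$$
\{\,n\le x \mid \nu(p,n)=0\,\}\ \subseteq\ \{\,n\le x \mid a(n)\ne 0\,\}.
$$

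Granting this, I would simply note that for every $n$ counted on the left one has $\bigl(\nu(p,n)-\delta(p)L_2(x)\bigr)^2=\delta(p)^2L_2(x)^2$, so that
\begin{eqnarray*}
\delta(p)^2L_2(x)^2\cdot\#\{\,n\le x \mid \nu(p,n)=0\,\}
&=& \sideset{}{^*}{\sum}_{\substack{n\le x \\ \nu(p,n)=0}}\bigl(\nu(p,n)-\delta(p)L_2(x)\bigr)^2 \\
&\le& \sideset{}{^*}{\sum}_{n\le x}\bigl(\nu(p,n)-\delta(p)L_2(x)\bigr)^2 \\
&\ll& \frac{\delta(p)x}{(\log x)^{1/2}}L_2(x)L_3(x),
\end{eqnarray*}
the final estimate being precisely \lemref{4}. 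Dividing through by $\delta(p)^2L_2(x)^2$ gives
$$
\#\{\,n\le x \mid \nu(p,n)=0\,\}\ \ll\ \frac{xL_3(x)}{\delta(p)(\log x)^{1/2}L_2(x)},
$$
as claimed. There is no genuine obstacle here: all the analytic work is already packaged in \lemref{2}--\lemref{4}, and the only thing requiring care is the bookkeeping with the asterisk on the summation, which is taken care of by the inclusion above.
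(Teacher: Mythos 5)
Your proof is correct and is essentially the paper's own argument, made explicit: the paper simply cites Lemma~\ref{4} and writes down the quotient $\frac{1}{\delta^2(p)L_2^2(x)}\cdot\{\text{Lemma 4 bound}\}$, which is exactly the Chebyshev-type second-moment inequality you spell out. Your additional observation that $\nu(p,n)=0$ forces $a(n)\neq 0$ (via multiplicativity, since $a(q^m)=0$ trivially satisfies $a(q^m)\equiv 0\pmod p$) is a point the paper passes over silently, and it is indeed what justifies applying the starred sum of Lemma~\ref{4} to bound the unrestricted count on the left-hand side.
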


\begin{proof}
By \lemref{4}, this is
\begin{eqnarray*}
\ll \frac{1}{\delta^2(p)L_2^2(x)} \left\{\delta(p)\frac{x}
{(\log x)^{\frac{1}{2}}}L_2(x)L_3(x)\right\}
 = \frac{xL_3(x)}{\delta(p)(\log x)^{\frac{1}{2}}L_2(x)}.
\end{eqnarray*}
\end{proof}

\section{\bf Proof of \thmref{thm1}}\label{proofofmainresult} 

For a prime $p$, let 
$$
G_p(x)= \# \left\{ n \le x \mid p|n, (n, a(n))=1, q|n \Rightarrow q\ge p \right\}
$$ 
and  
$G(x) = \displaystyle\sum_{\substack{p\le x}} G_p(x) = A_1 + A_2+ A_3$, 
where 
\begin{eqnarray*}
A_1 &=& \displaystyle \sum_{p \le L_2^{\frac{1}{2} -\epsilon}(x)} G_p (x),\\
A_2 &=& \displaystyle \sum_{L_2^{\frac{1}{2} - \epsilon}(x) < p 
< L_2^{1 +\epsilon}(x)} G_p(x), \\
A_3 &=&  \displaystyle \sum_{p \ge L_2^{1 + \epsilon}(x)} G_p(x).
\end{eqnarray*}
Now, using \lemref{5}, we have 
\begin{eqnarray}
A_1 & \le & \displaystyle \sum_{p \le L_2^{\frac{1}{2} -\epsilon}(x)}
\#\left\{n \le x \mid p|n, (n, a(n))=1\right\} 
\end{eqnarray}

\begin{eqnarray}
& \ll & \frac{x L_3(x)}{(\log x)^{\frac{1}{2}} L_2(x)} 
\displaystyle \sum_{p \le L_2^{\frac{1}{2} -\epsilon}(x)}
\frac{1}{\delta(p)} \nonumber~  \\
& \ll & \frac{x L_3(x)}{(\log x)^{\frac{1}{2}} L_2(x)} 
\displaystyle \sum_{1 \ll p \le L_2^{\frac{1}{2} -\epsilon}(x)}
 p \nonumber~, \phantom{m} {\rm as}~~\delta(p) \gg \frac{1}{p} \\
& \ll & \frac{x}{(\log x)^{\frac{1}{2}} L_2^{\epsilon}(x)} 
~=~ o\left( \frac{x}{\left(L_3(x)\log x \right)^{\frac{1}{2}}} \right).
\end{eqnarray}
Moreover, by \lemref{sieve2}, we have
\begin{eqnarray*}
A_2 & \le & \displaystyle 
\sum_{L_2^{\frac{1}{2} - \epsilon}(x) < p < L_2^{1 +\epsilon}(x)} 
\#\left\{n \le x \mid p|n, a(n)\ne 0, q|n \Rightarrow q\ge p\right\} \\
& \ll & \frac{x}{(\log x)^{\frac{1}{2}}} 
\displaystyle \sum_{L_2^{\frac{1}{2} - \epsilon}(x) < p < L_2^{1 +\epsilon}(x)} 
\frac{1}{p}\prod_{l\le p \atop l~{\rm prime}}\left(1 - \frac{1}{l}\right) \\
& \ll & \frac{x}{(\log x)^{\frac{1}{2}}} 
\displaystyle \sum_{L_2^{\frac{1}{2} - \epsilon}(x) < p < L_2^{1 +\epsilon}(x)} 
\frac{1}{p \log p}\\
& \ll & \frac{x}{L_3(x) (\log x)^{\frac{1}{2}}}
~~~~~~~~=~~~~o\left( \frac{x}{\left(L_3(x) \log x \right)^{\frac{1}{2}}} \right)  .
\end{eqnarray*}
Let $y_1 = L_2(x)^{1 + \epsilon}$ and as in \eqref{Ny1},
$N_{y_1}(x) = \# \left\{n\le x \mid q|n \Rightarrow q \ge y_1, 
~ a(n) \ne 0 \right\}$. 
Then 
\begin{eqnarray*}
N_{y_1}(x) ~- \sideset{}{^*}{\sum}_{\substack{y_1 \le q_1^{m}, ~q_2 \le x  \atop a(q_1^{m}) 
\equiv 0 \!\!\! \pmod{q_2}}}~~
\sideset{}{^{**}}{\sum}_{\substack{n\le x \atop {q_1^{m}||n, ~q_2|n}}}  1 
~~\le A_3 ~~\le  N_{y_1}(x),
\end{eqnarray*}
where $\sideset{}{^{**}}{\sum}_{\substack{n\le x}}$ means that 
the summation is over all natural numbers $n\le x$ 
such that $a(n) \ne 0$ and $q|n$ implies that $q>y_1$.\\

\noindent
By \lemref{sieve1}, to prove the theorem, it suffices to show that 
\begin{eqnarray}\label{11}
\sideset{}{^*}{\sum}_{\substack{y_1 \le q_1^{m}, ~q_2 \le x 
\atop a(q_1^{m}) \equiv 0 \!\!\! \pmod{q_2}}}
~~\sideset{}{^{**}}{\sum}_{\substack{n\le x \atop q_1^{m}||n, ~q_2|n}} 1 
~=~ o\left(\frac{x}{\left(L_3(x)\log x \right)^{\frac{1}{2}}} \right).
\end{eqnarray} 
In order to prove \eqref{11}, let us write
\begin{eqnarray*}
\sideset{}{^*}{\sum}_{\substack{y_1 \le q_1^{m}, ~q_2 \le x \atop a(q_1^m) 
\equiv 0 \!\!\! \pmod{q_2}}} 
~~~~~\sideset{}{^{**}}{\sum}_{\substack{n\le x  \atop q_1^{m}||n, ~q_2|n}} 1
& = & \sideset{}{^*}{\sum}_{\substack{y_1 \le q_1^m , ~q_2 \le x, ~m\ge 2 
\atop a(q_1^{m}) \equiv 0 \!\!\! \pmod{q_2}}} 
~~~\sideset{}{^{**}}{\sum}_{\substack{n\le x, 
\atop {q_1^m||n,~ q_2|n}}} 1 \\ \\
& + &  \sideset{}{^*}{\sum}_{\substack{y_1 \le q_1,~q_2 \le x
\atop a(q_1) \equiv 0 \!\!\! \pmod{q_2}}} 
~~~~~\sideset{}{^{**}}{\sum}_{\substack{n\le x \atop q_1||n, ~q_2|n}} 1 \\ \\
&= & B_1 + B_2~.
\end{eqnarray*}
Let us consider $B_1$ first. The terms for which $q_1^mq_2 \ge (\log x) x^{1/2} y_1^2$
contribute an amount which is
\begin{eqnarray*}
&\ll & \frac{\sqrt{x}}{\log x} 
\sum_{q_2 \le x} \frac{1}{q_2}
\sum_{q_1^m \ge y_1 \atop m\ge2} \frac{1}{q_1^m}\\
&\ll & \frac{\sqrt{x}}{y_1 \log x}L_2(x) ~\ll~ \frac{x}{L_2^{\epsilon}(x) \log x}.
\end{eqnarray*}
For the remaining terms $q_1^mq_2 \le (\log x)x^{1/2}y_1^2$.
We use \propref{nonzero} to see that the remaining terms in $B_1$ are
\begin{eqnarray*}
&\ll & \frac{x}{(\log x)^{\frac{1}{2}}} 
\displaystyle \sum_{y_1 \le q_2 \le x} \frac{1}{q_2} 
~\displaystyle \sum_{y_1 \le q_1^m  \atop m \ge 2} \frac{1}{q_1^m} \\
& \ll & \frac{x}{y_1(\log x)^{\frac{1}{2}}} ~\displaystyle \sum_{y_1 \le q_2 \le x}
\frac{1}{q_2} \\
& \ll & \frac{xL_2(x)}{y_1 (\log x)^{\frac{1}{2}}} 
~=~ \frac{x}{(\log x)^{\frac{1}{2}}L_2^{\epsilon}(x)} .
\end{eqnarray*}
For $B_2$, we observe that if $a(q_1) \ne 0$ and $a(q_1) \equiv 0 \pmod{q_2}$,
then $q_2 \le |a(q_1)| \le 2 \sqrt{q_1}$. Hence $q_1 \ge q_2^2/4$ and so 
$q_1q_2 \ge q_2^3/4$. Hence for the inner sum in $B_2$ to be nonempty, we need 
$q_2 \le (4x)^{1/3}$. Thus  
\begin{eqnarray*}
B_2 &=& \sideset{}{^*}{\sum}_{\substack{y_1 \le q_1 \le x 
\atop {y_1 \le q_2 \le (4x)^{1/3}
\atop a(q_1) \equiv 0 \!\!\! \pmod{q_2}}}}
~~~~~\sideset{}{^{**}}{\sum}_{\substack{n\le x \atop q_1||n, ~q_2|n}} 1 \\
&=&\sideset{}{^*}{\sum}_{\substack{y_1 \le q_1 \le \sqrt{x} 
\atop {y_1 \le q_2 \le 2\sqrt{q_1}
\atop a(q_1) \equiv 0 \!\!\! \pmod{q_2}}}}
~~~~~\sideset{}{^{**}}{\sum}_{\substack{n\le x \atop q_1||n, ~q_2|n}} 1
~+~ \sideset{}{^*}{\sum}_{\substack{\sqrt{x} \le q_1 \le x 
\atop {y_1 \le q_2 \le 2\sqrt{q_1}
\atop a(q_1) \equiv 0 \!\!\! \pmod{q_2}}}}
~~~~~\sideset{}{^{**}}{\sum}_{\substack{n\le x \atop q_1||n, ~q_2|n}} 1\\\\
&=& D_1 + D_2 ~.
\end{eqnarray*}
Then by \propref{nonzero} and the fact that $q_1q_2 \ll x^{3/4}$,
we have
\begin{eqnarray*}
D_1 &\ll &  \frac{x}{(\log x)^{\frac{1}{2}}}
\sideset{}{^*}{\sum}_{\substack{y_1 \le q_1 \le \sqrt{x} \atop
{y_1 \le q_2 \le 2\sqrt{q_1} \atop a(q_1) \equiv 0 \!\!\! \pmod{q_2}}}} \frac{1}{q_1q_2} \\ \\
& = & \frac{x}{(\log x)^{\frac{1}{2}}} \left\{
\sideset{}{^*}{\sum}_{\substack{y_1 \le q_2 \le 2 x^{1/4}
\atop{~\frac{1}{4}q_2^2 \le q_1 \le q_2^2 \log q_2 \atop a(q_1) 
\equiv 0 \!\!\! \pmod{q_2}}}}
\frac{1}{q_1q_2} ~+~ \sideset{}{^*}{\sum}_{\substack
{y_1 \le q_2 \le 2 x^{1/4}  \atop{q_2^2 \log q_2 \le q_1 \le \sqrt{x}
\atop a(q_1) \equiv 0 \!\!\! \pmod{q_2}}}} \frac{1}{q_1q_2}
\right\}.
\end{eqnarray*}
By \propref{prop100}, the second sum is
\begin{eqnarray*}
\ll  \frac{x L_2(x)}{(\log x)^{\frac{1}{2}}} 
\displaystyle \sum_{y_1 \le q_2 \le 2 x^{1/4}}\frac{1}{q_2^2}
\ll  \frac{x L_2(x)}{y_1(\log x)^{\frac{1}{2}}} 
~=~  \frac{x}{(\log x)^{\frac{1}{2}}L_2^{\epsilon}(x)}.
\end{eqnarray*}
The first sum is
\begin{eqnarray*}
&\ll & \frac{x}{(\log x)^{\frac{1}{2}}}
\displaystyle \sum_{\frac{1}{4}y_1^2 \le q_1 \le x} ~\frac{1}{q_1}
 ~~~\sideset{}{^*}{\sum}_{\substack{\sqrt{\frac{q_1}{\log q_1}} \le q_2 \le 2\sqrt{q_1} 
\atop a(q_1) \equiv 0 \!\!\! \pmod{q_2}}} \frac{1}{q_2} .
\end{eqnarray*}
We note that the inner sum over $q_2$ is bounded. In fact with 
$0<|a(q_1)|\le 2\sqrt{q_1}$, there exists at most
one $q_2 \ge \sqrt{\frac{q_1}{\log q_1}}$ which divides $a(q_1)$. 
Thus, the right hand side is
\begin{eqnarray*}
\ll  \frac{x}{(\log x)^{\frac{1}{2}}}\displaystyle \sum_{y_1 \le q_1 \le x} 
~\frac{\sqrt{\log q_1}}{q_1^{3/2}} \ll \frac{x}{(L_2(x) \log x)^{\frac{1}{2}}}.
\end{eqnarray*}
In order to estimate $D_2$, we write
\begin{eqnarray*}
D_2 &=& 
\sideset{}{^*}{\sum}_{\substack{y_1 \le q_2 \le e^{\sqrt{\log x}}\\
\atop {\sqrt{x} \le q_1 \le \frac{x}{2q_2} 
\atop a(q_1) \equiv 0 \!\!\! \pmod{q_2}}}}
~~~~~\sideset{}{^{**}}{\sum}_{\substack{n\le x \atop q_1||n, ~q_2|n}} 1
\qquad~~~~~~~~~~~~~~~+~~~~~~~~~~
\sideset{}{^*}{\sum}_{\substack{y_1 \le q_2 \le e^{\sqrt{\log x}}
\atop {\frac{x}{2q_2} \le q_1 \le  \frac{x}{q_2}
\atop a(q_1) \equiv 0 \!\!\! \pmod{q_2}}}}
~~~~~\sideset{}{^{**}}{\sum}_{\substack{n\le x \atop q_1||n, 
~q_2|n}} 1 \\ \\
&& ~~~~
+ \sideset{}{^*}{\sum}_{\substack{e^{\sqrt{\log x}}\le q_2 \le
\left(\frac{x}{\log x}\right)^{1/3}\\
 \atop {\sqrt{x} \le q_1 \le \frac{x}{q_2}
\atop a(q_1) \equiv 0 \!\!\! \pmod{q_2}}}}
~~~~~\sideset{}{^{**}}{\sum}_{\substack{n\le x \atop q_1||n, ~q_2|n}} 1 
~~~~~~~~~~~~+~~~~~~~~~~~~
 \sideset{}{^*}{\sum}_{\substack{\left(\frac{x}{\log x}\right)^{1/3} 
\le q_2 \le x \atop {\sqrt{x} \le q_1 \le \frac{x}{q_2}
\atop a(q_1) \equiv 0 \!\!\! \pmod{q_2}}}}
~~~~~\sideset{}{^{**}}{\sum}_{\substack{n\le x \atop q_1||n, ~q_2|n}} 1\\ \\
&=& J_1 + J_2 + J_3 + J_4 ~.
\end{eqnarray*}
Here
\begin{eqnarray*}
J_4 & \ll &
x \sideset{}{^*}{\sum}_{\substack{\sqrt{x} \le q_1 \le x}} \frac{1}{q_1}
\sideset{}{^*}{\sum}_{\substack{\left(\frac{x}{\log x}\right)^{1/3}
\le q_2 \le 2^{2/3}x^{1/3}}} \frac{1}{q_2} \\ \\
& \ll & x^{2/3}(\log x)^{1/3} \pi((4x)^{1/3})
\sideset{}{^*}{\sum}_{\substack{\sqrt{x} \le q_1 \le x}} \frac{1}{q_1}\\
\end{eqnarray*}
where $\pi(t)$ denotes the number of primes $\le t$. Thus
\begin{eqnarray*}
J_4 & \ll & \frac{x}{(\log x)^{2/3}} 
\sideset{}{^*}{\sum}_{\substack{\sqrt{x} \le q_1 \le x}} \frac{1}{q_1} 
~\ll~  \frac{xL_2(x)}{(\log x)^{2/3}}
\end{eqnarray*}
and
\begin{eqnarray*}
J_3  &\ll & 
x \sideset{}{^*}{\sum}_{\substack {\sqrt{x} \le q_1 \le x}} \frac{1}{q_1} 
\sideset{}{^*}{\sum}_{\substack{q_2|a(q_1) \atop q_2 \ge e^{\sqrt{\log x}}}}\frac{1}{q_2} \\ \\
&\ll & 
\frac{x}{e^{\sqrt{\log x}}} 
\sideset{}{^*}{\sum}_{\substack {\sqrt{x} \le q_1 \le x}} \frac{1}{q_1} 
\# \left\{q_2 \mid q_2 \ge e^{\sqrt{\log x}}, q_2|a(q_1), 0 \ne 
a(q_1) \le 2\sqrt{x} \right\} \\
&\ll &
\frac{x \sqrt{\log x}}{e^{\sqrt{\log x}}}
\sum_{\substack {q_1 \le x}} \frac{1}{q_1} 
\ll \frac{x \sqrt{\log x}~L_2(x)}{e^{\sqrt{\log x}}}.
\end{eqnarray*}
In order to estimate $J_1$ and $J_2$, we write
\begin{eqnarray*}
J_1 &=& \sideset{}{^*}{\sum}_{\substack{y_1 \le q_2 \le e^{\sqrt{\log x}}\\
\atop {\sqrt{x} \le q_1 \le \frac{x}{2q_2} 
\atop a(q_1) \equiv 0 \!\!\! \pmod{q_2}}}}
~~~~~\sideset{}{^{**}}{\sum}_{\substack{n\le x \atop q_1||n, ~q_2||n}} 1
\phantom{m}+ \phantom{m}
\sideset{}{^*}{\sum}_{\substack{y_1 \le q_2 \le e^{\sqrt{\log x}}\\
\atop {\sqrt{x} \le q_1 \le \frac{x}{2q_2} 
\atop a(q_1) \equiv 0 \!\!\! \pmod{q_2}}}}
~~~~~\sideset{}{^{**}}{\sum}_{\substack{n\le x \atop {q_1||n
\atop q_2^m||n, m\ge 2}}} 1 \\ \\
&=& J_{11} + J_{12}
\end{eqnarray*}
and
\begin{eqnarray*}
J_2 &=&
\sideset{}{^*}{\sum}_{\substack{y_1 \le q_2 \le e^{\sqrt{\log x}}
\atop {\frac{x}{2q_2} \le q_1 \le  \frac{x}{q_2}
\atop a(q_1) \equiv 0 \!\!\! \pmod{q_2}}}}
~~~~~\sideset{}{^{**}}{\sum}_{\substack{n\le x \atop q_1||n, 
~q_2||n}} 1
\phantom{m} + \phantom{m} 
\sideset{}{^*}{\sum}_{\substack{y_1 \le q_2 \le e^{\sqrt{\log x}}
\atop {\frac{x}{2q_2} \le q_1 \le  \frac{x}{q_2}
\atop a(q_1) \equiv 0 \!\!\! \pmod{q_2}}}}
~~~~~\sideset{}{^{**}}{\sum}_{\substack{n\le x \atop {q_1||n 
\atop q_2^m||n, m \ge 2}}} 1 \\ \\
&=& J_{21} + J_{22} ~~~~.
\end{eqnarray*}
We show that $J_{11}$ and $J_{21}$ are $o\left(x/L_3(x)(\log x)^{1/2}\right)$. 
Similarly, one can show that
$J_{12}$ and $J_{22}$ are $o\left(x/L_3(x)(\log x)^{1/2}\right)$.
We can write
\begin{eqnarray*}
J_{11} & \ll &
x \sideset{}{^*}{\sum}_{\substack{y_1 \le q_2 
\le e^{\sqrt{\log x}}}} \frac{1}{q_2} 
\sideset{}{^*}{\sum}_{\substack{\sqrt{x} \le q_1 \le \frac{x}{2q_2}
\atop a(q_1) \equiv 0 \pmod{q_2}}} 
\frac{1}{q_1 \left(\log \frac{x}{q_1q_2}\right)^{1/2}} \\ \\
& \ll &
x \sideset{}{^*}{\sum}_{\substack{y_1 \le q_2 
\le e^{\sqrt{\log x}}}} \frac{1}{q_2} 
~\int_{\sqrt{x}}^{x/2q_2} \frac{d\pi^{*}(t, q_2)}
{t \left(\log \frac{x}{q_2t}\right)^{1/2}}\\ \\ 
& \ll &
x \sideset{}{^*}{\sum}_{\substack{y_1 \le q_2 
\le e^{\sqrt{\log x}}}} \frac{1}{q_2} \left[\left\{\frac{\pi^{*}(t, q_2)}
{t \left(\log \frac{x}{q_2t}\right)^{1/2}}\right\}_{t = \sqrt{x}}^{t =x/2q_2} 
+ \int_{\sqrt{x}}^{x/2q_2} \frac{\pi^{*}(t,q_2)~dt}{t^2\left(\log 
\frac{x}{q_2t}\right)^{1/2} }\right]. 
\end{eqnarray*}
Then by using \thmref{thm 5.1}, we have
\begin{eqnarray*}
J_{11} &\ll & 
x \sideset{}{^*}{\sum}_{\substack{y_1 \le q_2 
\le e^{\sqrt{\log x}}}} \frac{1}{q_2^2} \left[\left\{\frac{1}{\log t 
\left(\log \frac{x}{q_2t}\right)^{1/2}}\right\}_{t = \sqrt{x}}^{t =x/2q_2} 
+ \int_{\sqrt{x}}^{x/2q_2} \frac{dt}{t \log t 
\left(\log \frac{x}{q_2t}\right)^{1/2} }\right] \\ \\
&\ll &
\frac{x}{\log x} \sideset{}{^*}{\sum}_{\substack{y_1 \le q_2 
\le e^{\sqrt{\log x}}}} \frac{1}{q_2^2} \left[1
+ \int_{\sqrt{x}}^{x/2q_2} \frac{dt}{t\left(\log \frac{x}{q_2t}\right)^{1/2} }\right] \\ \\
&\ll &
\frac{x}{(\log x)^{1/2}} \sideset{}{^*}{\sum}_{\substack{y_1 \le q_2 
\le e^{\sqrt{\log x}}}} \frac{1}{q_2^2} ~\ll~
\frac{x}{y_1(\log x)^{1/2}}.
\end{eqnarray*}
Since for each pair of primes $q_1, q_2$ with $y_1 \le q_2 \le e^{\sqrt{\log x}}, 
x/2q_2 \le q_1 \le x/q_2$, there
are at most two $n\le x$ with $q_1q_2\mid n$, we have
\begin{eqnarray*}
J_{21} &\ll &
\sideset{}{^*}{\sum}_{\substack{y_1 \le q_2 
\le e^{\sqrt{\log x}}}}\sideset{}{^*}{\sum}_{\substack{\frac{x}{2q_2} \le q_1 \le  
\frac{x}{q_2} \atop a(q_1) \equiv 0 \!\!\! \pmod{q_2}}}1 \\ \\ 
&\ll & 
\sideset{}{^*}{\sum}_{\substack{y_1 \le q_2 
\le e^{\sqrt{\log x}}}} \pi^{*}(x/q_2,~ q_2) ~\ll~ 
\frac{x}{\log x} \sideset{}{^*}{\sum}_{\substack{y_1 \le q_2 
\le e^{\sqrt{\log x}}}} \frac{1}{q_2^2},
 ~~~{\rm~ by~\thmref{thm 5.1}} \\ \\
&\ll &
\frac{x}{y_1\log x}.
\end{eqnarray*}
Hence
\begin{eqnarray*}
B_1 + B_2 ~=~  o\left(\frac{x}{(\log x)^{\frac{1}{2}} L_3(x)} \right).
\end{eqnarray*}
This completes the proof.

\smallskip
\noindent
{\bf Acknowledgments:} This work started while the first 
author was a post-doctoral fellow at the 
University of Toronto--Mississauga(UTM). 
It is her pleasure to thank UTM for financial support and 
excellent working conditions. Both authors would like to thank 
Ram Murty and the
referee for very helpful comments which enabled them to eliminate 
some errors from an earlier version. 

\bigskip


\begin{thebibliography}{100}
\bibitem{erdos}
P. Erd{\"o}s, Some asymptotic formulas in number theory, 
  {\em J. Indian Math. Soc.}, {\bf 12} (1949), 75-78.
\bibitem{Lang}
S. Lang, {\em Elliptic Functions}, Addison-Wesley, Reading, 1973.
\bibitem{lehmer1}
D.H. Lehmer, Ramanujan's function $\tau(n)$, {\em Duke Math. J.},
      {\bf 10} (1943), 483-492.
\bibitem{lehmer2} 
D.H. Lehmer, The vanishing of Ramanujan's $\tau(n)$, {\em Duke Math. J.}
      {\bf 14} (1947), 429-433.
\bibitem{ram1}
M. Ram Murty and V. Kumar Murty, Some results in number theory I, 
  {\em Acta Arith.}, {\bf 35} (1979), 367-371.
\bibitem{ram2}
M. Ram Murty and V. Kumar Murty, Odd values of 
fourier coefficients of certain modular forms, 
  {\em Int. J. Number Theory} {\bf 3} (2007), 455-470.
\bibitem{kumar1}
V. Kumar Murty, Lacunarity of modular forms, {\em  J. Indian Math. Soc.}, 
   {\bf 52} (1987), 127-146.
\bibitem{kumar4}   
V. Kumar Murty, Modular forms and the Chebotarev density theorem, II, in 
 {\em Analytic number theory}, pp. 287-308, ed. Y. Motohashi, {\em London Math. 
Soc. Lecture Notes} 247, Cambridge Univ. Press, 1997.   
\bibitem{kumar2}
V. Kumar Murty, Some results in number theory II, {\em Ramanujan Math. Soc. 
Lect. Notes Ser.}, {\bf 1} (2005), 51-55.
\bibitem{kumar3}
V. Kumar Murty, A variant of Lehmer's conjecture,  {\em J. Number Theory}
   {\bf 123} (2007), no. 1, 80-91.
\bibitem{rama} 
S. Ramanujan, On certain arithmetical functions,  
  {\em Trans. Camb. Phil. Soc.}, {\bf 22} (1916), 159-184. 
 (See also, Collected Papers,
 Number 18, pp. 136-192, AMS-Chelsea, Providence, 1962.) 
\bibitem{Rosen}
M. Rosen, A generalization of Mertens' theorem, {\em J. Ramanujan
Math. Soc.} {\bf 14}(1999), 1-19.
\bibitem{schaal}
W. Schaal,  On the large sieve method in algebraic number fields, 
   {\em J. Number Theory},  {\bf 2} (1970), 249-270.
\bibitem{serre1}
J. P. Serre, Divisibilit{\'e} de certaines fonctions arithm{\'e}tiques, 
   {\em L'Ens. Math.}, {\bf 22} (1976), 227-260.		  
\bibitem{serre2}
J. P. Serre,  Quelques applications du 
th{\'e}or{\`e}me de densit{\'e} de Chebotarev,
    {\em Publ. Math. IHES}, {\bf 54} (1981), 323-401.
\bibitem{serre3}
J. P. Serre,  Sur la lacunarit{\'e} des puissances de $\eta$, 
 {\em Glasgow Math. J.}, {\bf 27} (1985), 203-221.
\bibitem{S}
H.M. Stark, Some effective cases of the Brauer-Siegel theorem, 
 {\em Invent. Math.} {\bf 23} (1974) 135-152.
\end{thebibliography}
\end{document}